\newtheorem{nostheorem}{Theorem}
\newtheorem*{notheorem}{Connectivity Theorem (Atiyah)}
\newtheorem*{no4theorem}{Convexity Theorem (Atiyah and Guillemin-Sternberg)}
\newtheorem{theorem}{Theorem}[section]
\newtheorem{prop}[theorem]{Proposition}
\newtheorem{lemma}[theorem]{Lemma}
\newcommand{\R}{{\mathbb R}}
\newcommand{\Z}{{\mathbb Z}}
\newcommand{\T}{{\mathbb T}}
\newcommand{\norm}[1]{\| #1\|}
\newcommand{\phy}{\varphi}
\newcommand{\op}[1]{\!\!\mathop{\rm ~#1}\nolimits}
\newcommand{\DD}{\mathrm{d}}
\newenvironment{remark}{\refstepcounter{theorem}\par\medskip\noindent{\bf
Remark~\thetheorem~~}}{\unskip\nobreak\hfill\hbox{ $\oslash$}\par\bigskip}
\newenvironment{example}{\refstepcounter{theorem}\par\medskip\noindent{\bf
Example~\thetheorem~~}}{\unskip\nobreak\hfill\hbox{ $\oslash$}\par\bigskip}
\newenvironment{definition}{\refstepcounter{theorem}\par\medskip\noindent{\bf
Definition~\thetheorem~~}}
\newcommand{\deriv}[2]{\frac{\partial #1}{\partial #2}}
\newcommand{\abs}[1]{\left|#1\right|}
\newcommand{\impliq}{\Rightarrow}
\newcommand{\fleche}{\rightarrow}
\newcommand{\RM}{\mathbb{R}}
\newcommand{\NM}{\mathbb{N}}
\renewcommand{\geq}{\geqslant}
\renewcommand{\leq}{\leqslant}
\begin{document}

\title{{\bf Symplectic bifurcation theory for integrable systems}}

\author{\'Alvaro Pelayo, Tudor S. Ratiu, and San V\~{u} Ng\d{o}c}

\date{}

\maketitle

\begin{abstract}
  This paper develops a symplectic bifurcation theory for integrable
  systems in dimension four.  We prove that if an integrable system
  has no hyperbolic singularities and its bifurcation diagram has no
  vertical tangencies, then the fibers of the induced singular Lagrangian
  fibration are connected. The image of this
  singular Lagrangian fibration is, up to smooth deformations, a planar region bounded by
  the graphs of two continuous functions. The bifurcation diagram consists
  of the boundary points in this image  plus a countable
  collection of  rank zero singularities, which are contained in the
  interior of the image.   Because it recently has
  become clear to the mathematics and mathematical physics communities
  that the bifurcation diagram of an integrable system provides the
  best framework to study symplectic invariants, 
  this paper provides a setting for studying quantization questions, and
  spectral theory of quantum integrable systems.
  \end{abstract}

\section{Introduction and Main Theorems} \label{sec:intro}

A major
obstacle to a symplectic theory of finite dimensional integrable
Hamiltonian systems is that \emph{differential topological} and
\emph{symplectic} problems appear side by side, but smooth and
symplectic methods do not always mesh well. Morse-Bott theory
represents a success in bringing together in a cohesive way continuous
and differential tools, and it has been used effectively to study
properties of dynamical systems. But incorporating symplectic
information into the context of dynamical systems is far from
automatic. However, many concrete examples are known for which
computations, and numerical simulations, exhibit a close relationship
between the symplectic dynamics of a system, and the differential
topology of its bifurcation set. 

In the 1980s and 1990s, the Fomenko
school developed a Morse theory for regular energy surfaces of
integrable systems. Moreover, theoretical successes (in any dimension)
for compact periodic systems in the 1970s and 1980s by Atiyah,
Guillemin, Kostant, Sternberg and others, gave hope that one can find
a mathematical theory for bifurcations of integrable systems in the
symplectic setting.

This paper develops a symplectic bifurcation theory for integrable
systems in dimension four -- compact or not.  Because it recently has
become clear that the bifurcation diagram of an integrable system is
the natural setting to study symplectic invariants (see for instance
\cite{PeVN2009, PeVN2010}), this paper provides a setting for the
study of quantum integrable systems.  Semiclassical quantization is a
strong motivation for developing a systematic bifurcation theory of
integrable systems; the study of bifurcation diagrams is fundamental
for the understanding of quantum spectra~\cite{CDV,lazutkin, Ze1998}.
Moreover, the results of this paper may have applications to mirror
symmetry and symplectic topology because an integrable system without
hyperbolic singularities gives rise to a toric fibration with
singularities. The base space is endowed with a singular integral
affine structure.  These singular affine structures are studied in
symplectic topology, mirror symmetry, and algebraic geometry; for
instance, they play a central role in the work of Kontsevich and
Soibelman \cite{KS}. We refer to Section \ref{sec:remarks} for further
analysis of these applications, as well as a natural connection to the
study of solution sets in real algebraic geometry.

The
development of the theory requires the introduction of methods to
construct Morse-Bott functions which, from the point of view of
symplectic geometry, behave well near the singularities of integrable
systems.  These methods use  Eliasson's
theorems on linearization of non-degenerate singularities of
integrable systems, and the symplectic topology of integrable systems, to which
many have contributed.

The first part of this paper is concerned with the connectivity of
joint level sets of vector-valued maps on manifolds, when these are
defined by the components of an integrable system.  The most striking
previous result in this direction is Atiyah's 1982 theorem which
guarantees the connectivity of the fibers of the momentum map when the
integrable system comes from a Hamiltonian torus action. The second
part of the paper explains how the pioneering results proven in the
seventies and eighties by Atiyah, Guillemin, Kostant, Kirwan, and
Sternberg describing the image of the momentum of a \emph{Hamiltonian
  compact group action} also hold in the context of integrable systems
on four-dimensional manifolds, when there are no hyperbolic
singularities.  The conclusions of the theorems in this paper are
essentially optimal. Moreover, there is only one transversality
assumption on the integrable system: that there should be no vertical
tangencies on the bifurcation set, up to diffeomorphism. If this
condition is violated then there are examples which show that one
cannot hope for any fiber connectivity.

The work of Atiyah, Guillemin, Kostant, Kirwan, and Sternberg
exhibited connections between symplectic geometry, combinatorics,
representation theory, and algebraic geometry.  Their work guarantees
the convexity of the image of the momentum map (intersected with the
positive Weyl chamber if the compact group is
non-commutative). Although this property no longer holds for general
integrable systems, an explicit description of the image of the
singular Lagrangian fibration given by an integrable system with two
degrees of freedom can be given. The understanding of this image,
which corresponds to the bifurcation diagram of the dynamics in the
physics literature, is essential for the description of the system, as
it has proven to be the best framework to define new symplectic
invariants of integrable systems, and hence to quantize; see the
recent work on semitoric integrable systems 
\cite{PeVN2009,  PeVN2010, PeVN2011, PeVNpress}.

\subsection*{Fiber connectivity for integrable systems}

The most striking known result for fiber connectivity of vector-valued
functions is Atiyah's famous connectivity theorem \cite{At1982} proved
in the early eighties. 

\begin{notheorem} 
  Suppose that $(M,\, \omega)$ is a compact, connected, symplectic,
  $2m$-dimensional manifold.  For smooth functions $f_1,\, \ldots,\,
  f_n\colon M \to \mathbb{R}$, let $\varphi_i^{t_i}$ be the flow of
  the Hamiltonian vector field $\mathcal{H}_{f_i}$, where
  $\mathcal{H}_{f_i}$ is defined by the equation
  $\omega(\mathcal{H}_{f_i},\, \cdot)=\DD f_i$.  We denote by $\T^n$ the
  $n$-dimensional torus $\R^n/\Z^n$. 
  Suppose that $M\ni p
  \mapsto (\varphi_1^{t_1} \circ \ldots \circ \varphi_n^{t_n})(p) \in
  M $, where $(t_1, \ldots, t_n) \in \RM^n$, defines a $\T^n$-action
  on $M$.  Then the fibers of the map $F:=(f_1,\,\ldots,\, f_n) \colon
  M \to \mathbb{R}^n$ are connected.
\end{notheorem}

This theorem has been generalized by a number of authors to general
compact Lie groups actions and more general symplectic
manifolds. Indeed, a Hamiltonian $m$-torus action on a $2m$-manifold
may be viewed as a very particular integrable system. A long standing
question in the integrable systems community has been to what extent
Atiyah's result holds for integrable systems, where checking fiber
connectivity by hand is extremely difficult (even for easily
describable examples). This paper answers this question in the
positive in dimension four: if an integrable system has no hyperbolic
singularities and its bifurcation diagram has no vertical tangencies,
then the fibers of the integrable system are connected.

To state this result precisely, recall that a map $F=(f_1,\ldots,f_n)
\colon (M,\,\omega) \to \mathbb{R}^n$ is a \emph{integrable
  Hamiltonian system} if $\mathcal{H}_{f_1},\, \ldots,
\mathcal{H}_{f_n}$ are point-wise almost everywhere linearly
independent and for all indices $i,\,j$, the function $f_i$ is
invariant along the flow of the Hamiltonian vector field
$\mathcal{H}_{f_j}$.  (Recall: $\mathcal{H}_{f_i}$ is the vector field
defined by $\omega(\mathcal{H}_{f_i},\, \cdot)=\DD f_i$).

In general, fiber connectivity is no longer true for integrable
systems, due to the existence of singularities. For instance, consider
the manifold is $M = S^2 \times S^1 \times S^1$.  Choose the following
coordinates on $S^2$: $h \in [1,\,2]$ and $a \in S^1=
\mathbb{R}/2\pi\mathbb{Z}$.  Choose coordinates $b, c \in \mathbb{R}/
2\pi\mathbb{Z}$ on $S^1 \times S^1$ and let $\DD h \wedge \DD a + n\DD
b\wedge\DD c$ be the symplectic form on $M$, where $n$ is a positive
integer. The map $F(h, \,a,\,b,\,c) = (h \cos(n b) , \,h \sin(n b) )$,
defines an integrable system. The fiber over any regular value of $F$
is $n$ copies of $S^1\times S^1$.  The image $F(M)$ is an annulus (see
Figure~\ref{fig:annulus-only}).

\begin{figure}[h]
  \centering
  \includegraphics[height=4.5cm, width=4.5cm]{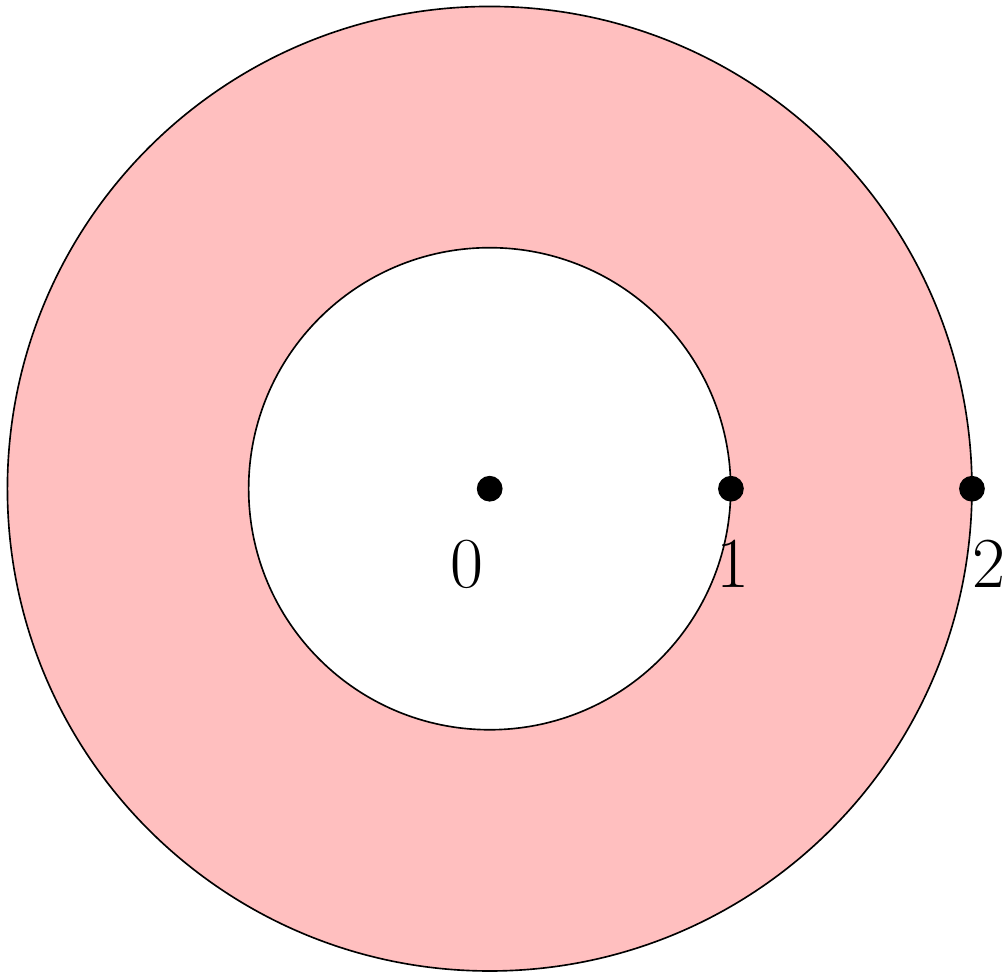}
  \caption{Image $F(M)$ of integrable system with disconnected fibers,
    on the compact manifold $S^2 \times S^1 \times S^1$. This
    integrable system has only singularities of elliptic and
    transversally elliptic type, so it is remarkable how while being
    quite close to be a toric system, fiber connectivity is lost.}
  \label{fig:annulus-only}
\end{figure}

An annulus has the property that it has vertical tangencies,
and it cannot be deformed into a domain without such
tangencies. Remarkably, if such tangencies do not exist in some
deformation of $F(M)$, fiber connectivity still holds. This is for
instance the case for Hamiltonian torus actions. Next we state this
precisely.

In this paper, manifolds are assumed to be ${\rm C}^{\infty}$ and second
countable. Let us recall here some standard definitions. 
A map $f \colon X \to Y$ between topological spaces is \textit{proper}
if the preimage of every compact set is compact.  Let $X$, $Y$ be
smooth manifolds, and let $A\subset X$. A map $f:A\to Y$ is said to be
\emph{smooth} if at any point in $A$ there is an open neighborhood on
which $f$ can be smoothly extended. The map $f$ is called a
\emph{diffeomorphism onto its image} when $f$ is injective, smooth,
and its inverse $f^{-1}:f(X)\to X$ is smooth.  If $X$ and $Y$ are
smooth manifolds, the \emph{bifurcation set} $\Sigma_f$ of a smooth
map $f: X\rightarrow Y$ consists of the points of $X$ where $f$ is
not locally trivial (see Definition \ref{def:bifurcation}). It is 
known that the set of critical values of $f$ is included in the
bifurcation set and that if $f$ is proper this inclusion is an
equality (see \cite[Proposition 4.5.1]{AbMa1978} and the comments
following it).
 
Second, recall that an integrable system $F \colon M \to\mathbb{R}^2$
is called \emph{non-degenerate} if its singularities are
non-degenerate (see Definition \ref{def:nondegpoi}). If $F$ is proper
and non-degenerate, then $\Sigma_F$ is the image of a piecewise smooth
immersion of a 1-dimensional manifold
(Proposition~\ref{prop:strata}). We say that a vector in $\RM^2$ is
\emph{tangent to $\Sigma_F$} whenever it is directed along a left
limit or a right limit of the differential of the immersion.  We say
that the curve $\gamma$ has a \emph{vertical tangency} at a point $c$
if there is a vertical tangent vector at $c$. Our first main result is
the following.

\begin{nostheorem}[Connectivity for Integrable Systems -- Compact
  Case] \label{theo:main-connectivity0} Suppose that $(M, \omega)$ is
  a compact connected symplectic four-manifold. Let $F \colon M
  \to\mathbb{R}^2$ be a non-degenerate integrable system without
  hyperbolic singularities. Denote by $\Sigma_F$ the bifurcation set
  of $F$.  Assume that there exists a diffeomorphism $g \colon F(M)
  \to \mathbb{R}^2$ onto its image such that $g(\Sigma_F)$ does not
  have vertical tangencies (see Figure~\ref{fig:diffeo}).  Then $F$
  has connected fibers.
\end{nostheorem}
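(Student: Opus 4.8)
My plan is to deduce the theorem from a Morse-theoretic connectedness principle applied to a family of auxiliary functions constructed by hand out of Eliasson's local normal forms. First I would set $G:=g\circ F\colon M\to\RM^2$. Since $g$ is a diffeomorphism onto its image, $G$ has exactly the same fibers as $F$, the singular Lagrangian fibration $M\to G(M)$ is isomorphic to $M\to F(M)$ (so Eliasson's normal forms for the singular fibers of $F$ carry over verbatim to $G$), and $\Sigma_G=g(\Sigma_F)$ has no vertical tangencies. Hence it will be enough to prove that $G^{-1}(c)$ is connected for every $c=(c_1,c_2)\in G(M)$; write $G=(G_1,G_2)$.

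The Morse-theoretic input I would use is the following elementary fact: if $\mu\colon M\to[0,\infty)$ is a Morse--Bott function on the connected compact manifold $M$ with $\min\mu=0$ and no critical submanifold of index $1$, then $\mu^{-1}(0)$ is connected. Indeed, $\mu$ has finitely many critical values, and the Morse--Bott handle decomposition shows that, in the absence of index-$1$ critical submanifolds, the number of connected components of the sublevel set $\{\mu\le t\}$ is nondecreasing on $(0,\infty)$; for small $t>0$ that sublevel set is a tubular neighbourhood of $\mu^{-1}(0)$ and for $t$ large it equals $M$, which is connected, so $\mu^{-1}(0)$ must be connected. Thus the theorem reduces to constructing, for each $c\in G(M)$, a Morse--Bott function $\mu_c\ge 0$ on $M$ with $\mu_c^{-1}(0)=G^{-1}(c)$, $\min\mu_c=0$, and no critical submanifold of index $1$.

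Away from the singular set $\mathrm{Crit}(G)$ the natural candidate $(G_1-c_1)^2+(G_2-c_2)^2$ already is Morse--Bott, vanishes exactly on $G^{-1}(c)$, and has index $0$ there, so all the work is local, near the finitely many singular orbits. By non-degeneracy and the absence of hyperbolic singularities, $\mathrm{Crit}(G)$ splits into transversally elliptic surfaces (mapping onto the one-dimensional strata of $\Sigma_G$), elliptic--elliptic points (the corners of $\Sigma_G$), and focus--focus points (isolated interior points of $\Sigma_G$); near each of these Eliasson's theorem puts $G$, up to a diffeomorphism of the base, into the corresponding standard model. In each model I would write down explicitly a nonnegative Morse--Bott function with the prescribed zero set, index $0$ along it, and no index-$1$ critical submanifold (a suitable function of the local action variables in the elliptic and elliptic--elliptic cases; at a focus--focus value one checks directly that the singular fiber is connected through the pinch, and for $c$ not a focus--focus value the fiber is locally regular so no special model is needed), and then glue these local models to $(G_1-c_1)^2+(G_2-c_2)^2$ with a partition of unity. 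This is exactly where the no-vertical-tangency hypothesis is used: along each transversally elliptic surface it forces $G_1$ to be a submersion transverse to the stratum, which is what is needed so that the interpolation does not create an index-$1$ critical submanifold and so that the index along $G^{-1}(c)$ stays $0$. (When a vertical tangency is present, as for the annulus of Figure~\ref{fig:annulus-only}, such a submanifold is unavoidable and the fiber genuinely disconnects.)

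With $\mu_c$ in hand the Morse-theoretic criterion gives that $G^{-1}(c)=\mu_c^{-1}(0)$ is connected for every $c\in G(M)$, and since $G$ and $F$ have the same fibers, $F$ has connected fibers. The hard part will be the construction of $\mu_c$: producing, uniformly in $c$, a \emph{global} Morse--Bott function whose zero set is the prescribed (possibly very singular) fiber and all of whose critical submanifolds have index different from $1$. Carrying out the compatible gluing of the local elliptic, elliptic--elliptic and focus--focus models along the transversally elliptic surfaces — the point at which the tension between the smooth and the symplectic data, tamed by the no-vertical-tangency assumption, must be dealt with — is the main obstacle.
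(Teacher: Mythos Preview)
Your Morse--theoretic input (a nonnegative Morse--Bott function with no index-$1$ critical submanifold has connected zero set) is correct, and the overall strategy of pulling back a function from the base and analysing indices via Eliasson's models is the same circle of ideas the paper uses. But the specific construction you propose has a genuine gap.

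The function $(G_1-c_1)^2+(G_2-c_2)^2=f\circ G$ with $f(x,y)=(x-c_1)^2+(y-c_2)^2$ already \emph{is} Morse--Bott on all of $M$ for generic $c$ (this is exactly Theorem~\ref{prop:bott} with this Morse $f$); the problem is the indices. The regular level sets of $f$ are circles centred at $c$, and these will typically be tangent to the $1$--strata of $\Sigma_G$. When such a tangency is an \emph{inward} contact (the circle locally lies inside $G(M)$), the computation~\eqref{equ:hessian-rank-one} gives a $3$--dimensional transversal Hessian of the form $A\xi_1^2+B(x_2^2+\xi_2^2)$ with $A,B$ of opposite sign, i.e.\ index $1$ or $2$. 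The hypothesis that $\Sigma_G$ has no vertical tangencies says nothing about tangencies with circles; it does not rule out inward contacts (any concave portion of the boundary, which is allowed for almost-toric images, produces one). So index-$1$ critical circles really do occur for your $\mu_c$, and they are not local to $\textup{Crit}(G)$ in any way you can excise: they arise from the global relation between $c$ and the boundary. A partition-of-unity interpolation cannot remove them without creating new critical points (convex combinations of Morse--Bott functions are not Morse--Bott in general, and a mountain-pass argument shows one cannot simply erase an index-$1$ saddle), so the ``gluing'' step is where the argument breaks.

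The paper avoids this by choosing $f(x,y)=x$, whose level sets are vertical lines; the no-vertical-tangency hypothesis is then exactly the statement that every level set of $f$ meets $\Sigma_G$ transversally, so Proposition~\ref{prop:bott-index} yields that $J=G_1$ is Morse--Bott with all indices and co-indices $\neq 1$, hence has connected fibers (Proposition~\ref{prop:levelset}). One then restricts $H=G_2$ to each $J$--fiber and repeats the argument (Theorem~\ref{theo:fibers}). In effect the paper replaces your single two-dimensional distance function by an iterated one-dimensional slicing, precisely so that the transversality hypothesis does the work and no gluing is needed.
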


\begin{figure}[h]
  \centering
  \includegraphics[width=0.5\textwidth]{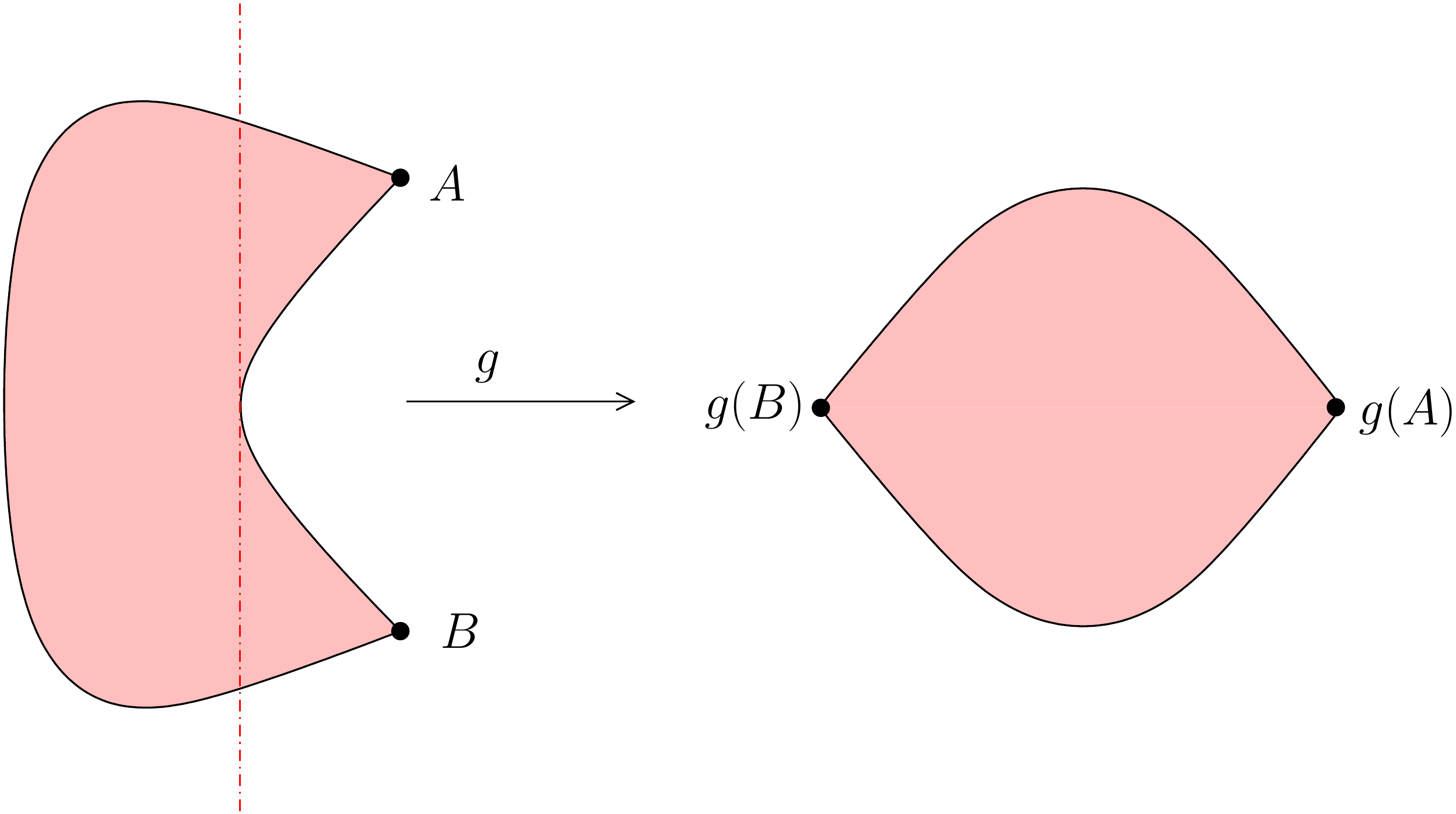}
  \caption{Suppose that the bifurcation set $\Sigma_F$ of $F$ consists
    precisely of the boundary points in the left figure (which depicts
    $F(M)$). The diffeomorphism $g$ transforms $F(M)$ to the region on
    the right hand side of the figure, in order to remove the original
    vertical tangencies on $\Sigma_F$.}
  \label{fig:diffeo}
\end{figure}

\begin{remark}
  If $F \colon M \to \mathbb{R}^2$ in Theorem
  \ref{theo:main-connectivity} is the momentum map of a Hamiltonian
  $2$\--torus action then $\Sigma_F=\partial (F(M))$. This is no
  longer true for general integrable systems; the simplest example of
  this is the spherical pendulum, which has a point in the bifurcation
  set in the interior of $F(M)$.
\end{remark}

We denote by $C_{\alpha,\beta}$ the cone in Figure \ref{fig:cone},
i.e., the intersection of the half-planes defined by $y \geq
(\tan\alpha)\, x$ and $y \leq (\tan \beta)\, x$ on the plane
$\mathbb{R}^2$. This cone will be called \emph{proper}, if $\alpha>0,
\, \beta>0$, $\alpha+\beta<\pi$. Theorem \ref{theo:main-connectivity}
can be extended to non-compact manifolds as follows.

\begin{nostheorem}[Connectivity for Integrable Systems -- Non-compact
  Case] \label{theo:main-connectivity} Suppose that $(M, \omega)$ is a
  connected symplectic four-manifold. Let $F \colon M \to
  \mathbb{R}^2$ be a non-degenerate integrable system without
  hyperbolic singularities such that $F$ is a proper map. Denote by
  $\Sigma_F$ the bifurcation set of $F$.  Assume that there exists a
  diffeomorphism $g \colon F(M) \to \mathbb{R}^2$ onto its image such
  that:
  \begin{enumerate}[\upshape(i)]
  \item the image $g(F(M))$ is included in a proper convex cone
    $C_{\alpha,\,\beta}$ (see Figure~\ref{fig:cone});
  \item the image $g(\Sigma_F)$ does not have vertical tangencies (see
    Figure~\ref{fig:diffeo}).
  \end{enumerate}
  Then $F$ has connected fibers.
\end{nostheorem}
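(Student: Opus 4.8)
The plan is to normalize $F$ by the diffeomorphism $g$ and then run a two-step Morse-theoretic connectivity argument in the spirit of Atiyah, using the two components of the normalized map in turn, and finally to dispatch the countably many exceptional parameter values by a limiting argument built on Eliasson's normal forms. First I would replace $F$ by $G:=g\circ F\colon M\to\mathbb{R}^2$. Since $g$ is a diffeomorphism onto its image, $G$ has exactly the same fibers as $F$, its singular set (where $\mathrm{d}G$ drops rank) coincides with that of $F$ with the same ranks, and one has $G(\Sigma_F)=g(\Sigma_F)$ with no vertical tangencies and $G(M)\subseteq C_{\alpha,\beta}$. Writing $G=(G_1,G_2)$, properness of $F$ together with the cone condition~(i) shows that $G_1$ is \emph{proper}: $G_1^{-1}([s,t])=G^{-1}\big(([s,t]\times\mathbb{R})\cap C_{\alpha,\beta}\big)$, and the latter set is compact because $C_{\alpha,\beta}$ is a proper cone. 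By Eliasson's linearization theorem and the absence of hyperbolic blocks, every singular point of $F$ is regular (rank $2$), elliptic--regular (rank $1$), or of rank $0$ of elliptic--elliptic or focus--focus type; the rank-$0$ points are therefore isolated, hence countable, and by properness of $G_1$ only finitely many of them lie in each slab $G_1^{-1}([s,t])$, so the critical values of $G_1$ form a discrete subset of $\mathbb{R}$.

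The core of the argument is that $G_1$ is a well-behaved Morse function. Its differential vanishes exactly at the rank-$0$ points of $F$: at a regular or elliptic--regular point the image of $\mathrm{d}F_p$ is a line which $\mathrm{d}g$ carries either onto $\mathbb{R}^2$ or onto the tangent line of $\Sigma_G$, and hypothesis~(ii) forces that line to be transverse to $\ker\mathrm{d}\pi_1$, so $\mathrm{d}G_1(p)\neq0$. At a rank-$0$ point one computes, in Eliasson coordinates in which $F$ becomes $c+A(q_1,q_2)$ with $A\in GL(2,\mathbb{R})$ and $(q_1,q_2)=(x^2+\xi^2,\,y^2+\eta^2)$ (elliptic--elliptic) or $(q_1,q_2)=(x\xi+y\eta,\,x\eta-y\xi)$ (focus--focus), that $\mathrm{Hess}(G_1)_p$ is the corresponding linear combination of $\mathrm{Hess}\,q_1$ and $\mathrm{Hess}\,q_2$; since $\mathrm{Hess}\,q_1,\mathrm{Hess}\,q_2$ are complementary definite rank-$2$ forms in the elliptic--elliptic case, and every nonzero combination of them is nondegenerate of signature $(2,2)$ in the focus--focus case, this Hessian is nondegenerate of index in $\{0,2,4\}$ — the relevant coefficients being nonzero by~(ii), applied to the two edges of $\Sigma_F$ issuing from the point, in the elliptic--elliptic case and because $\det\mathrm{d}g\neq0$ in the focus--focus case. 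Thus $G_1$ is a proper Morse function on the connected $4$-manifold $M$ with isolated critical points, none of index $1$ or $3$, and a Morse-theoretic connectivity lemma of the type used by Atiyah shows that $G_1^{-1}(I)$ is connected for every interval $I$; in particular every level set $N_x:=G_1^{-1}(x)$ is compact and connected.

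Next I would fix $x$ and study $G_2|_{N_x}$, whose level sets are precisely the fibers of $G$, hence of $F$, over $\{x\}\times\mathbb{R}$. A point of $N_x$ is critical for $G_2|_{N_x}$ if and only if it is a rank-$\le1$ point of $F$. When $x$ is a regular value of $G_1$ (all but countably many $x$, by the previous paragraph), $N_x$ is a smooth compact connected $3$-manifold and the critical set of $G_2|_{N_x}$ consists of the elliptic--regular points lying in $N_x$; using Eliasson's normal form at such a point together with~(ii), one writes $G_2|_{N_x}$ near its critical locus as $h(x^2+\xi^2)$ with $h'(0)\neq0$ (the nonvanishing again coming from $\det\mathrm{d}g\neq0$ and the absence of vertical tangencies), so the critical submanifolds are circles with definite normal Hessian, of index and coindex in $\{0,2\}$. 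Applying the same connectivity lemma to the proper Morse--Bott function $G_2|_{N_x}$ shows that $\big(G_2|_{N_x}\big)^{-1}(J)$ is connected for every interval $J$; taking $J=\{y\}$ proves that every fiber of $F$ over a point with regular first $G$-coordinate is connected.

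It remains to treat the countably many strips over critical values of $G_1$, where $N_x$ is singular. By the Morse lemma for $G_1$ together with the Eliasson models, near each of its finitely many rank-$0$ points $N_x$ is a point, a cone over a $2$-torus, or a cone over a pinched torus, and on such a neighbourhood $G_2|_{N_x}$ factors through the radial projection onto an interval along which it is strictly monotone (the derivative being again $\pm\det\mathrm{d}g\neq0$); combining this with the Morse--Bott behaviour at rank-$1$ points and with the connectedness of $N_x$ (valid for all $x$ as a level set of $G_1$), one still obtains that $\big(G_2|_{N_x}\big)^{-1}(J)$ is connected for every $x$ and every interval $J$. Then, for an arbitrary fiber $F^{-1}(c)=G^{-1}(a,b)$, choosing $\varepsilon>0$ so small that $[a-\varepsilon,a+\varepsilon]$ contains no critical value of $G_1$ other than possibly $a$, writing $G^{-1}\big([a-\varepsilon,a+\varepsilon]\times[b-\varepsilon,b+\varepsilon]\big)$ as a union of the connected nonempty slices $N_x\cap G_2^{-1}([b-\varepsilon,b+\varepsilon])$, and using properness to see that the parameters $x$ whose slice lies in a given relatively open-and-closed part form an open set, one concludes that this preimage is connected; since $F$ is proper, $F^{-1}(c)$ is the nested intersection of these compact connected sets as $\varepsilon\downarrow0$, hence connected. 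I expect the main obstacle to be exactly this last point: making rigorous the stratified Morse-theoretic bookkeeping on the singular level sets $N_x$ — i.e.\ checking that near the rank-$0$ singularities both $N_x$ and the restriction $G_2|_{N_x}$ are tame enough that the ``connected slices with open overlap'' argument goes through — whereas the two invocations of Atiyah-style connectivity and the index computations, which carry the conceptual weight, are comparatively mechanical once Eliasson's normal forms are in hand.
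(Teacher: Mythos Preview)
Your strategy up through the connectedness of regular fibers is essentially the paper's: normalize by $g$, show that $G_1$ is a proper Morse function whose critical points are exactly the rank-$0$ points of $F$ with indices in $\{0,2,4\}$ (this is the paper's Proposition~\ref{prop:J-connected}, specialized from the Morse--Bott statement of Theorem~\ref{prop:bott} and Proposition~\ref{prop:bott-index}), deduce that each $N_x=G_1^{-1}(x)$ is compact connected via Proposition~\ref{prop:levelset}, and then, for \emph{regular} $x$, run a second Morse--Bott argument on $G_2|_{N_x}$ with indices $0$ or $2$ (Step~1, Case~1A of Theorem~\ref{theo:fibers}). Your index computations at elliptic--elliptic, focus--focus, and transversally elliptic points match the paper's.

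The divergence is precisely at the point you flag as the main obstacle: the passage from connected regular fibers to connected singular fibers. You propose a direct stratified Morse analysis of $G_2$ on the singular levels $N_x$ (cones on tori or pinched tori), followed by a slicing and nested-intersection argument. The paper does not do this. Instead, once regular fibers are known to be connected, it proves a structural fact (Step~2 of Theorem~\ref{theo:fibers}): \emph{no elliptic critical value of $F$ can lie in the interior of $F(M)$}, so the interior critical values are exactly the isolated focus--focus values. This is obtained by a short argument using the rank-$1$ normal form and the uniqueness of global extrema of $H|_{N_x}$ on connected $N_x$. From this it follows (Step~3) that $B_r\cap D$ is connected for every small disk $D$ about a critical value, and then an abstract lemma (Lemma~\ref{tricky}) --- any proper map with connected regular fibers, locally connected regular-value set near critical values, and nowhere-dense critical set has \emph{all} fibers connected --- finishes the proof without ever analyzing $G_2$ on a singular level.

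So your approach is correct in outline but takes a harder road at the end. The paper's route buys two things: it never needs Morse theory on singular spaces, and it isolates the passage to singular fibers in a purely topological lemma that uses only properness, Sard, and local connectedness of $B_r$. Conversely, your direct approach, if completed, would avoid having to first prove the ``no interior elliptic values'' statement; but making the slice argument rigorous also needs the nonemptiness of $N_x\cap G_2^{-1}([b-\varepsilon,b+\varepsilon])$ for all nearby $x$, which amounts to the semicontinuity of $H^\pm$ proved in Theorem~\ref{imagemomentum:theorem}, so you would be pulling in part of the image-description machinery anyway.
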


Note that Theorem \ref{theo:main-connectivity} clearly implies Theorem
\ref{theo:main-connectivity0}.

\begin{figure}[h]
  \centering
  \includegraphics[width=0.3\textwidth]{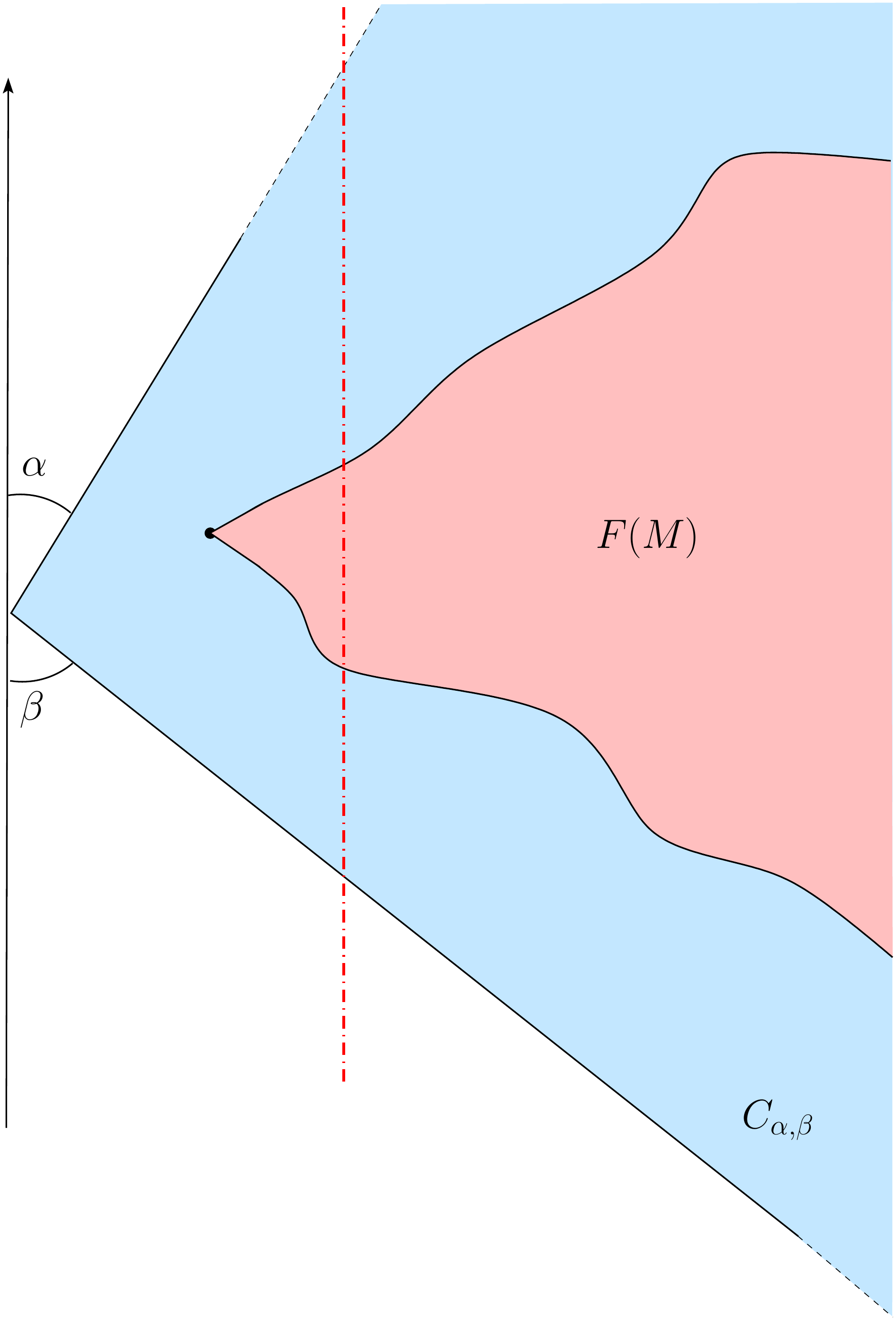}
  \caption{The image $F(M)$ lies in the convex cone $C_{\alpha,\beta}$
    and has no vertical tangencies. See condition~1 in
    theorem~\ref{theo:main-connectivity}.}
  \label{fig:cone}
\end{figure}

\begin{remark}
  The assumption in Theorem \ref{theo:main-connectivity} is optimal in
  the following sense: if there exist vertical tangencies then the
  system can have disconnected fibers (Examples \ref{ex0}, \ref{ex2});
  see Theorem \ref{theo:striking}. Other than these exceptions we do
  not know of an integrable system with disconnected fibers and which
  does not violate our assumptions.
\end{remark}

Further in this article, we introduce a weaker transversality
condition that allows us to deal with some cases of vertical
tangencies. Using this condition and
Theorem~\ref{theo:main-connectivity}, we will prove the following.

\begin{figure}[h]
  \centering
  \includegraphics[width=0.55\textwidth]{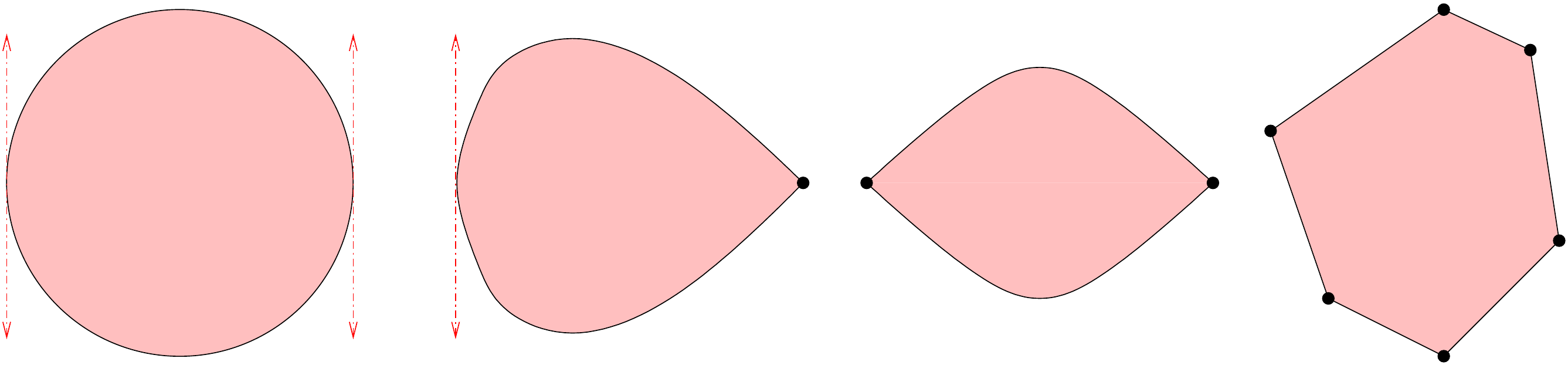}
  \caption{A disk, a disk with a conic point, a disk with two conic
    points, or a compact convex polygon.}
  \label{fig:4images}
\end{figure}

\begin{nostheorem} \label{theo:striking} Suppose that $(M, \omega)$ is
  a compact connected symplectic four-manifold. Let $F \colon M \to
  \mathbb{R}^2$ be a non- degenerate integrable system without
  hyperbolic singularities.  Assume that
  \begin{enumerate}
  \item[{\rm (a)}] the interior of $F(M)$ contains a finite number of
    critical values;
  \item[{\rm (b)}] there exists a diffeomorphism $g$ such that
    $g(F(M))$ is either a disk, a disk with a conic point, a disk with
    two conic points, or a compact convex polygon (see Figure
    \ref{fig:4images}).
  \end{enumerate}
  Then the fibers of $F$ are connected.
\end{nostheorem}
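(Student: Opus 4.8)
The plan is to derive Theorem~\ref{theo:striking} from Theorem~\ref{theo:main-connectivity} by first isolating a \emph{weaker transversality condition} that tolerates vertical tangencies of ``convex'' (one-sided) type, and then checking that the four model images of Figure~\ref{fig:4images} satisfy it. The preliminary reduction is to replace $F$ by $\widetilde F := g\circ F$. Since $g$ is a diffeomorphism of $F(M)$ onto its image, $\widetilde F$ is still a proper non-degenerate integrable system without hyperbolic singularities and has exactly the same fibers as $F$; hence we may assume from the start that $F(M)=\Delta$ is, literally, a disk, a disk with one or two conic points, or a compact convex polygon. After composing with one further diffeomorphism of $\Delta$ (a shear and a small perturbation) we may also arrange that $\Delta$ is convex, that $\partial\Delta$ has a vertical tangent at only finitely many points, that no boundary arc or edge is vertical, and that each conic point or vertex is in general position with respect to the vertical direction; at each horizontally extreme vertex, convexity then places $\Delta$ inside a proper cone $C_{\alpha,\beta}$.

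Because $\Delta$ is convex it lies on one side of each of its supporting lines; in particular every vertical line tangent to $\partial\Delta$ meets $\Delta$ on one side only. Together with hypothesis~(a), which guarantees that only finitely many critical values lie in the interior of $F(M)$, this is exactly the input of the weaker transversality statement I would formulate and prove, namely: \emph{a compact non-degenerate integrable system without hyperbolic singularities, whose interior critical set is finite and all of whose vertical tangencies (of a diffeomorphic model of $\Sigma_F$) are one-sided, has connected fibers}. Granting this statement, Theorem~\ref{theo:striking} is immediate, so the work is in its proof.

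To prove the weaker statement I would run the Atiyah-type height-function argument underlying Theorem~\ref{theo:main-connectivity}. Set $f_1 := \mathrm{pr}_1\circ\widetilde F$; it is proper since $M$ is compact. Away from the finitely many vertical tangencies and interior critical values, Eliasson's normal form and the absence of hyperbolic blocks show that $f_1$ behaves as a Morse--Bott function whose critical submanifolds all have index and coindex at least $2$ (the local blocks are elliptic or transversally elliptic, never hyperbolic). The role of one-sidedness is to preserve this at a vertical tangency: if $\Delta$ lies locally on one side of $\{x=x^\ast\}$, then in Eliasson coordinates at the corresponding transversally elliptic circle the direction transverse to the fiber is an elliptic block, so the Hessian of $f_1$ there is (co)definite and the index is $0$ or $2$, never $1$; at a convex conic point or vertex the local model is elliptic--elliptic and $f_1$ restricts to a positive linear combination of the two elliptic blocks, again giving index $0$. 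At each interior critical value the fiber is connected (a point for an elliptic--elliptic singularity, a pinched torus for a focus--focus singularity) and again contributes no index-$1$ handle. Morse theory for a function with no index-$1$ or coindex-$1$ critical submanifold then forces $f_1^{-1}(c)$ to be connected for every $c$, since $M$ is connected. Finally, restricting to the compact, possibly singular, three-dimensional level set $f_1^{-1}(c)$ and repeating the argument with $\mathrm{pr}_2\circ\widetilde F$ — equivalently, slicing $\Delta$ along $\{x=c\}$, smoothing the resulting sub-region so that it has no vertical tangencies and lies in a proper cone, and invoking Theorem~\ref{theo:main-connectivity} there — yields that $\widetilde F^{-1}(c,c')=F^{-1}(c,c')$ is connected.

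The main obstacle is the local analysis at the vertical tangencies, conic points and polygon vertices: one must verify rigorously that one-sidedness promotes $f_1$ to a function with no index-$1$ or coindex-$1$ critical submanifold there (this is the content, and the correct formulation, of the weaker transversality condition), and then splice the resulting local connectivity statements into a global one, using that $\widetilde F$ is a locally trivial fibration over the connected set of regular values (Ehresmann's theorem, applicable because $\widetilde F$ is proper) — here the simple connectedness of $\Delta$, hence of the set of regular values up to finitely many punctures, is what rules out monodromy obstructing the propagation of connectivity. That convexity cannot be dropped is visible in the annulus of Figure~\ref{fig:annulus-only}: its inner boundary is a \emph{straddling} vertical tangency, which produces an index-$1$ (and coindex-$1$) critical submanifold of $f_1$, and the fibers genuinely disconnect.
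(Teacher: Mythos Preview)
Your overall strategy coincides with the paper's: the ``weaker transversality condition'' you isolate is exactly the paper's notion of \emph{non-degenerate outward contact} (Definition~\ref{defi:outward}), and Proposition~\ref{prop:bott-index} is precisely the statement that such one-sided tangencies produce Morse--Bott indices and co-indices in $\{0,2,3\}$, never $1$. The paper streamlines the casework you set up: for a disk with two conic points or a polygon it simply composes with a further diffeomorphism removing vertical tangencies altogether and applies Theorem~\ref{theo:main-connectivity0} directly; only the disk and the disk with one conic point actually require the outward-contact argument.

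There is, however, a genuine gap in your final step. Once you know that $f_1=J$ has connected level sets, you propose either to ``repeat the argument with $\mathrm{pr}_2\circ\widetilde F$ on the possibly singular three-dimensional level set $f_1^{-1}(c)$'' or to ``invoke Theorem~\ref{theo:main-connectivity} there''. Neither works as stated: Theorem~\ref{theo:main-connectivity} is a statement about four-manifolds, not three-manifolds, and the level set $J^{-1}(c)$ can be singular, so one cannot simply rerun the Morse--Bott machinery on it. The passage from ``$J$ has connected fibers'' to ``$F$ has connected fibers'' is the content of Theorem~\ref{theo:fibers}, whose proof is substantial (it handles regular and singular values of $J$ separately, rules out interior elliptic strata, and uses Lemma~\ref{tricky} for singular fibers). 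The paper's proof of Theorem~\ref{theo:striking} invokes exactly this chain: Proposition~\ref{prop:bott-index} $\to$ Proposition~\ref{prop:levelset} $\to$ Theorem~\ref{theo:fibers}. You should do the same rather than sketch an ad hoc reduction.

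A smaller point: your assertion that ``at each interior critical value the fiber is connected (a pinched torus for a focus--focus singularity)'' is circular --- fiber connectivity is what you are trying to prove. What you actually need (and what the paper shows in the proof of Theorem~\ref{prop:bott}) is that a focus--focus point is an isolated Morse critical point of $J$ of index~$2$; the fiber itself plays no role in the Morse--Bott analysis of $J$.
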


Here a neighborhood of a \emph{conic point} is by definition locally
diffeomorphic to some proper cone $C_{\alpha,\beta}$.

\subsection*{Structure of the image of an integrable system}

Atiyah proved his connectivity theorem \cite{At1982} simultaneously
with the so called convexity theorem of Atiyah, Guillemin, and
Sternberg \cite{At1982,GuSt1982}; it is one of the main results in
symplectic geometry. Their convexity theorem describes the image of
the momentum map of a Hamiltonian torus action.  Altogether, this
result generated much subsequent research, in particular it led Kirwan
to prove a remarkable non-commutative version \cite{kirwan}.

\begin{no4theorem}
  Suppose that $(M,\, \omega)$ is a compact, connected, symplectic,
  $2m$-dimensional manifold.  For smooth functions $f_1,\, \ldots,\,
  f_n\colon M \to \mathbb{R}$, let $\varphi_i^{t_i}$ be the flow of
  the Hamiltonian vector field $\mathcal{H}_{f_i}$, where
  $\mathcal{H}_{f_i}$ is defined by the equation
  $\omega(\mathcal{H}_{f_i},\, \cdot)=\DD f_i$.   We denote by $\T^n$ the
  $n$-dimensional torus $\R^n/\Z^n$. Suppose that $M\ni p
  \mapsto (\varphi_1^{t_1} \circ \ldots \circ \varphi_n^{t_n})(p) \in
  M $, where $(t_1, \ldots, t_n) \in \RM^n$, defines a $\T^n$-action
  on $M$.  Then the image of $F:=(f_1,\, \ldots,\, f_n) \colon M \to
  \mathbb{R}^n$ is a convex polytope.
\end{no4theorem}

\begin{remark}
  An important paper prior to the work of Atiyah, Guillemin, Kirwan,
  and Sternberg dealing with convexity properties in particular
  instances is Kostant's \cite{Ko1974}, who also refers to preliminary
  questions of Schur, Horn and Weyl.  These convexity results were
  used by Delzant \cite{Delzant1988} in his classification of
  \emph{symplectic toric manifolds}.  All together, these papers
  revolutionized symplectic geometry and its connections to
  representation theory, combinatorics, and complex algebraic
  geometry.  For a detailed analysis of symplectic toric manifolds in
  the context of complex algebraic geometry, see Duistermaat-Pelayo
  \cite{DuPe2009}.
\end{remark}

In this paper we prove the natural version of the
Atiyah\--Guillemin\--Sternberg convexity theorem in the context of
integrable systems.  Before stating it we recall that the
\emph{epigraph $\op{epi}(f) \subseteq \mathbb{R}^{n+1}$ of a map $f
  \colon A \subseteq \mathbb{R}^n \to \mathbb{R} \cup \{\pm\infty\}$}
consists of the points lying on or above its graph, i.e., the set
$\op{epi}(f):= \{ (x,\, y) \in A \times \mathbb{R} \, | \, y \geq
f(x)\}$. Similarly, the \emph{hypograph $\op{hyp}(f)
  \subseteq\mathbb{R}^{n+1}$ of a map $f \colon A \subseteq
  \mathbb{R}^n \to\mathbb{R} \cup \{\pm \infty\}$} consists of the
points lying on or below its graph, i.e., the set $ \op{hyp}(f):= \{
(x,\, y) \in A \times\mathbb{R} \, | \, y \leq f(x) \}$.

\begin{nostheorem}[Image of Lagrangian fibration of integrable system
  -- Compact Case]
  \label{theo:main-image0}
  Suppose that $(M, \omega)$ is a compact connected symplectic
  four-manifold. Let $F \colon M \to \mathbb{R}^2$ be a non-degenerate
  integrable system without hyperbolic singularities. Denote by
  $\Sigma_F$ the bifurcation set of $F$.  Assume that there exists a
  diffeomorphism $g \colon F(M) \to \mathbb{R}^2$ onto its image such
  that $g(\Sigma_F)$ does not have vertical tangencies (see
  Figure~\ref{fig:diffeo}). Then:
  \begin{itemize}
  \item[{\rm (1)}] the image $F(M)$ is contractible and  the bifurcation
set can be described as
  $ \Sigma_F=\partial  (F(M)) \sqcup \mathcal{F}, $ where $\mathcal{F}$ is
a finite set
    of rank $0$ singularities which is contained in the interior of
    $F(M)$;
  \item[{\rm (2)}]
  let $(J,\,H):=g\circ F$ and let $J(M)=[a,\,b]$. Then  the functions
$H^{+},\,
    H^{-} \colon [a,\,b]  \to \R$ defined by $H^{+}(x) := \max_{J^{-1}(x)}
    H$ and $H^{-}(x) := \min_{J^{-1}(x)} H$ are continuous and $F(M)$
    can be described as $F(M) = g^{-1}(\op{epi}(H^{-}) \cap
    \op{hyp}(H^+))$.
\end{itemize}
\end{nostheorem}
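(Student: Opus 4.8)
\textbf{Proof proposal for Theorem \ref{theo:main-image0}.}

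The plan is to deduce the structural statement from the already-established Connectivity Theorem (Theorem \ref{theo:main-connectivity0}) together with the structure theory of the bifurcation set of a proper non-degenerate integrable system. Since $F$ is proper (being defined on a compact manifold), $\Sigma_F$ equals the set of critical values of $F$, and by Proposition~\ref{prop:strata} it is the image of a piecewise smooth immersion of a $1$-manifold. Because $F$ has no hyperbolic singularities, Eliasson's normal forms tell us that the only types of singularities are: rank-$1$ elliptic (transversally elliptic), which produce smooth curve arcs in $\Sigma_F$; rank-$0$ elliptic-elliptic, which produce "corner" points; and rank-$0$ focus-focus, which produce isolated points of $\Sigma_F$ in the \emph{interior} of $F(M)$. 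First I would establish part~(1). Working in the coordinates given by $g$, so that $\Sigma_F$ has no vertical tangencies, one argues that every smooth arc of the rank-$1$ stratum is, locally, a graph over the $J$-axis. Using connectivity of fibers together with properness, a connectedness/openness argument on $[a,b]$ shows that for each $x\in(a,b)$ the fiber $J^{-1}(x)$ meets $\Sigma_F$ in exactly its "top" and "bottom" critical points (coming from transversally elliptic orbits), except at the finitely many $x$ where a focus-focus value occurs. This forces $\partial(F(M))$ to be precisely the rank-$1$ part of $\Sigma_F$ together with the rank-$0$ elliptic-elliptic points, and leaves the focus-focus points (necessarily finite in number because $M$ is compact, each focus-focus fiber being isolated) as an interior finite set $\mathcal F$. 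Contractibility of $F(M)$ then follows because, after applying $g$, the image is squeezed between the graphs of two functions over an interval (part~(2)), hence is homeomorphic to a convex region.

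For part~(2), set $(J,H)=g\circ F$ and $J(M)=[a,b]$ (a compact interval since $M$ is compact, $J$ continuous). The functions $H^{\pm}(x)=\max/\min_{J^{-1}(x)}H$ are well defined because $J^{-1}(x)$ is compact and nonempty for $x\in[a,b]$. To prove continuity I would use the following: properness of $F$ gives that $x\mapsto J^{-1}(x)$ is, in a suitable sense, upper semicontinuous as a set-valued map, which yields upper semicontinuity of $H^{+}$ and lower semicontinuity of $H^{-}$; the reverse semicontinuities are exactly where fiber connectivity enters. Indeed, if $H^{+}$ were to jump down at some $x_0$, then for $x$ near $x_0$ the fiber of $(J,H)$ over a value just below the upper graph at $x_0$ would be split into a piece near $x_0$ and pieces at nearby $x$, contradicting Theorem~\ref{theo:main-connectivity0}; the no-vertical-tangency hypothesis is what guarantees the local picture of $\Sigma_F$ near a boundary arc is a graph $y=H^{\pm}(x)$ with no jumps. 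Combining the two one-sided continuities with connectivity gives that $H^{+}$ and $H^{-}$ are continuous. Finally, the inclusion $F(M)\subseteq g^{-1}(\op{epi}(H^{-})\cap\op{hyp}(H^{+}))$ is immediate from the definitions, and the reverse inclusion is again a connectivity statement: given $(x,y)$ with $H^{-}(x)\le y\le H^{+}(x)$, the set $\{t\in[H^-(x),H^+(x)] : (x,t)\in g(F(M))\}$ is nonempty, closed, and — because any gap in it would disconnect a nearby fiber — also open in $[H^-(x),H^+(x)]$, hence all of it, so $(x,y)\in g(F(M))$.

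The main obstacle I anticipate is the continuity of $H^{\pm}$, and more precisely ruling out "inward jumps" of the boundary curves. The easy semicontinuity (from properness) is routine, but the opposite one genuinely requires the structure theory: one must show that along a boundary arc the extremal value $H^{+}(x)$ is attained at a rank-$1$ transversally elliptic critical point whose image is locally a smooth graph, and that no focus-focus value or elliptic-elliptic corner can create a discontinuity in $H^{\pm}$ (elliptic-elliptic corners are precisely where $H^{+}$ and $H^{-}$ meet at the endpoints $a,b$ or where the convex-polygon corners sit, and these are still continuity points). The no-vertical-tangency hypothesis is the precise condition that prevents a boundary arc from turning back on itself and thereby failing to be expressible as a single-valued graph $H^{\pm}$; I would isolate this as the key lemma. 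Once continuity of $H^{\pm}$ is in hand, both the description $F(M)=g^{-1}(\op{epi}(H^-)\cap\op{hyp}(H^+))$ and the contractibility in part~(1) follow formally, and the finiteness of $\mathcal F$ follows from compactness of $M$ since distinct focus-focus points have disjoint saturated neighborhoods.
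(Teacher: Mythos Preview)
Your route differs from the paper's, and there is a genuine gap in part~(2). The paper does \emph{not} deduce the image description from fiber-connectivity of $F$. Instead it shows directly that $J$ is a proper Morse--Bott function with connected level sets: the no-vertical-tangency hypothesis is used as a \emph{transversality} condition for the level curves of $f(x,y)=x$ against $\Sigma_{\tilde F}$, so that Proposition~\ref{prop:bott-index} applies and gives $J=f\circ\tilde F$ Morse--Bott with all indices and coindices $\neq 1$; then Proposition~\ref{prop:levelset} yields connectedness of $J^{-1}(x)$. With that in hand, the purely Morse-theoretic Theorem~\ref{imagemomentum:theorem} gives the continuity of $H^\pm$, the epigraph/hypograph description, and contractibility in one stroke --- the point being that once $J^{-1}(x)$ is connected, $H(J^{-1}(x))$ is automatically an interval.

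Your argument for the reverse inclusion --- ``a gap in $\{t:(x,t)\in g(F(M))\}$ would disconnect a nearby fiber'' --- does not work as stated. Connectedness of every $F^{-1}(x,y)$ says nothing about whether the vertical slice of the image, equivalently $J^{-1}(x)$, is connected; a priori $J^{-1}(x)=\bigcup_y F^{-1}(x,y)$ could fall into several pieces even though each summand is connected. You might try to rescue this via the general fact that a closed map with connected fibers pulls connected sets back to connected sets, but applying that to $J^{-1}(x)=F^{-1}(g^{-1}(\{x\}\times\RM))$ requires the vertical slice of $g(F(M))$ to be connected --- which is exactly the statement you are trying to prove. The same circularity undermines your lower-semicontinuity argument for $H^+$. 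What is missing is precisely the independent input the paper supplies: $J$ has connected fibers, obtained through the Morse--Bott machinery rather than through Theorem~\ref{theo:main-connectivity0}. Your treatment of part~(1) (the description of $\Sigma_F$ and finiteness of $\mathcal{F}$) is fine and matches the paper's use of Theorem~\ref{theo:interior}~(i)$\Rightarrow$(iv) plus compactness.
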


Figure~\ref{fig:fig1} shows a possible image $F(M)$, as described in
Theorem~\ref{theo:main-image0}. In the case on non-compact manifolds 
we have the following result.

\begin{figure}[h]
  \centering
  \includegraphics[width=0.8\textwidth]{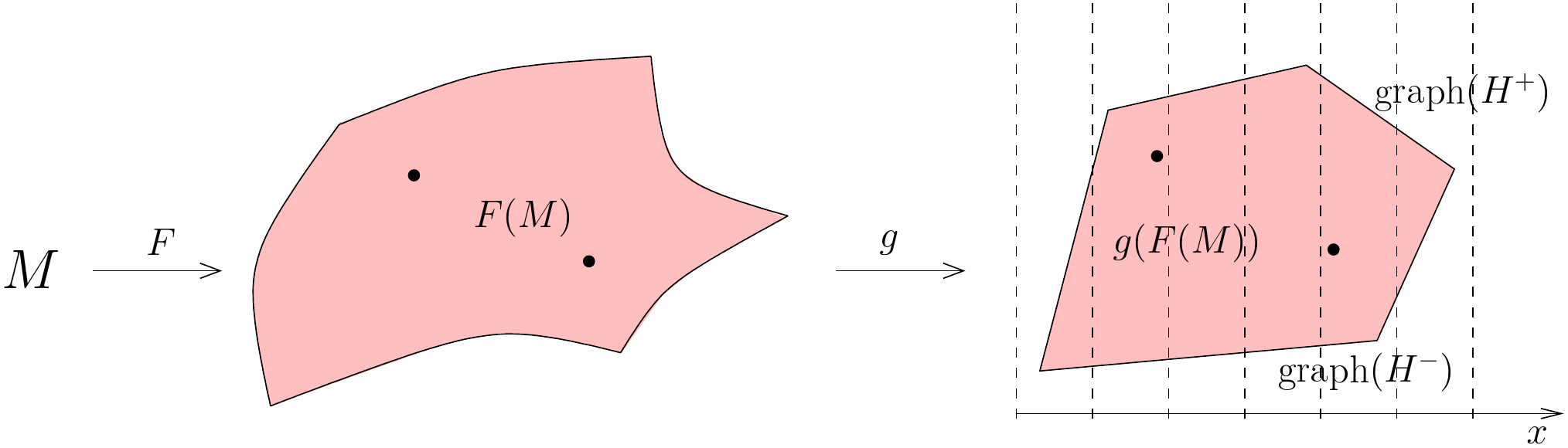}
  \caption{Description of the image of an integrable system. The image
    is first transformed to remove vertical tangencies, and then it
    can be described as a region bounded by two graphs.}
  \label{fig:fig1}
\end{figure}

\begin{nostheorem}[Image of Lagrangian fibration of integrable system
  -- Non-compact case]
  \label{theo:main-image}
  Suppose that $(M, \omega)$ is a connected symplectic
  four-manifold. Let $F \colon M \to \mathbb{R}^2$ be a non-degenerate
  integrable system without hyperbolic singularities such that $F$ is
  a proper map. Denote by $\Sigma_F$ the bifurcation set of $F$.
  Assume that there exists a diffeomorphism $g \colon F(M) \to
  \mathbb{R}^2$ onto its image such that:
  \begin{enumerate}[\upshape(i)]
  \item the image $g(F(M))$ is included in a proper convex cone
    $C_{\alpha,\, \beta}$ (see Figure~\ref{fig:cone});
  \item the image $g(\Sigma_F)$ does not have vertical tangencies (see
    Figure~\ref{fig:diffeo}).
  \end{enumerate}
  Equip $\overline{\RM}:=\mathbb{R} \cup \{\pm\infty\}$ with the
  standard topology.  Then:
  \begin{itemize}
  \item[{\rm (1)}] the image $F(M)$ is contractible and the
    bifurcation set can be described as $ \Sigma_F=\partial (F(M))
    \sqcup \mathcal{F}, $ where $\mathcal{F}$ is a countable set of
    rank zero singularities which is contained in the interior of
    $F(M)$;
  \item[{\rm (2)}] let $(J,\,H):=g\circ F$. Then the functions
    $H^{+},\, H^{-} \colon J(M) \to \R$ defined on the interval $J(M)$
    by $H^{+}(x) := \max_{J^{-1}(x)} H$ and $H^{-}(x) :=
    \min_{J^{-1}(x)} H$ are continuous and $F(M)$ can be described as
    $F(M) = g^{-1}(\op{epi}(H^{-}) \cap \op{hyp}(H^+))$.
  \end{itemize}
\end{nostheorem}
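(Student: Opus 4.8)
The plan is to first reduce to the case $g=\mathrm{id}$ and then carry out a Morse‑theoretic analysis of the first component. Since $g$ is a diffeomorphism onto its image it is, near $F(M)$, a local diffeomorphism, so it preserves the singular set, properness and non‑degeneracy of the system, the Williamson type of each singularity (Definition~\ref{def:nondegpoi}), and it carries boundary points of $F(M)$ to boundary points of $g(F(M))$ and interior to interior. Hence $\tilde F:=g\circ F=(J,H)$ again satisfies all the hypotheses, with $g=\mathrm{id}$; by Theorem~\ref{theo:main-connectivity} it has connected fibers, and it suffices to prove (1) and (2) for $\tilde F$, transporting the conclusions back by $g^{-1}$. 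Write $B:=\tilde F(M)$ and $I:=J(M)$, which is an interval since $M$ is connected. After possibly composing $g$ with $(x,y)\mapsto(-x,y)$ we may assume $C_{\alpha,\beta}$ opens in the direction of increasing $x$; in any case the vertical direction is not in $C_{\alpha,\beta}$, so every vertical line meets $C_{\alpha,\beta}$ in a compact (possibly empty) set. As $\tilde F$ is proper this gives that $J^{-1}(x)$ is compact for every $x$ and $J^{-1}\big((-\infty,c]\big)$ is compact for every $c$.

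The key step is to show that \emph{$J$ is a Morse function all of whose critical points have index $0$, $2$ or $4$}. Since $\tilde F$ is a submersion off its singular set, $\operatorname{Crit}(J)$ is contained in that set. At a transversally elliptic point $p$, Eliasson's theorem gives symplectic coordinates with $\tilde F=\psi\circ(q_1,q_2)$, $q_1=\tfrac12(x_1^2+\xi_1^2)$, $q_2=\xi_2$, $\psi$ a local diffeomorphism; a direct computation shows $\mathrm dJ_p=0$ exactly when the tangent $\mathrm d\psi_0(0,1)$ to the bifurcation arc $\psi(\{q_1=0\})$ at $\tilde F(p)$ is vertical, which is excluded by hypothesis. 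Thus $\operatorname{Crit}(J)$ is the set of rank‑zero points of $\tilde F$, which — there being no hyperbolic blocks — are elliptic‑elliptic or focus‑focus, hence isolated, hence (as $M$ is second countable) countably many. At an elliptic‑elliptic point, $J-J(p)=a(x_1^2+\xi_1^2)+b(x_2^2+\xi_2^2)+\cdots$ with $(a,b)=\nabla(\pi_1\circ\psi)(0)\neq0$; non‑verticality of the two boundary arcs issuing from $\tilde F(p)$ forces $a\neq0\neq b$, so the Hessian $\operatorname{diag}(a,a,b,b)$ is non‑degenerate of index $0$, $2$ or $4$. At a focus‑focus point, $J-J(p)=a\,q_1+b\,q_2+\cdots$ with $q_1=x_1\xi_1+x_2\xi_2$, $q_2=x_1\xi_2-x_2\xi_1$, $(a,b)\neq0$, and $a\,q_1+b\,q_2$ is, after a linear change of the $\xi$'s, again of the form $x_1\eta_1+x_2\eta_2$, so it is non‑degenerate of signature $(2,2)$: index $2$. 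Hence $J$ is Morse with no critical point of index $1$ or $3$ and with compact sublevel sets, so by the classical Morse‑theoretic connectedness lemma $J$ has a unique minimum and every level set $J^{-1}(x)$ is connected. Consequently $H\big(J^{-1}(x)\big)$ is compact and connected, i.e. $B\cap(\{x\}\times\mathbb R)=[H^-(x),H^+(x)]$ for all $x\in I$: $B$ is vertically convex.

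Next I would establish continuity of $H^\pm$ together with the description of $\Sigma_{\tilde F}$. Upper semicontinuity of $H^+$ and lower semicontinuity of $H^-$ are immediate from compactness (a maximizing sequence over $J^{-1}(x_n)$, $x_n\to x$, stays in a fixed compact preimage and subconverges into $J^{-1}(x)$). For $\Sigma_{\tilde F}$, and hence the reverse semicontinuities, I use the local models again: a regular value has a compact connected, hence toric, fiber, so $\tilde F$ is locally trivial there (Liouville--Arnold) and the value is interior; a focus‑focus value has a full neighborhood in $B$, so it is interior, and the set $\mathcal F$ of such values is countable by the previous paragraph; a transversally elliptic or elliptic‑elliptic value sits, by the half‑plane resp.\ quadrant model with non‑vertical boundary arcs, on the graph of the upper and/or lower boundary of $B$, which there is a smooth non‑vertical arc — this simultaneously puts such values on $\partial B$ and yields continuity of $H^\pm$ at the corresponding $x$ (and likewise at the endpoints of $I$). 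The delicate exclusion is that no transversally elliptic value lies in $\operatorname{int}B$: if $c=(x_0,y_0)$ were one, then (since $J$ is strictly monotone along the bifurcation arc through $c$, by non‑verticality) we may take $x_0$ a regular value of $J$, so $J^{-1}(x_0)$ is a compact connected $3$‑manifold containing no rank‑zero points and every critical point of $H|_{J^{-1}(x_0)}$ is transversally elliptic; the same normal‑form computation shows each is a Morse‑Bott critical circle of index $0$ or $2$ (coindex $2$ or $0$), so $y_0$ is a local extremum of $H|_{J^{-1}(x_0)}$. But $H|_{J^{-1}(x_0)}$ has connected level sets (the fibers of $\tilde F$) and no critical submanifold of index or coindex $1$, so by Morse‑Bott theory a local extremum is global, i.e. $y_0\in\{H^-(x_0),H^+(x_0)\}$ and $c\in\partial B$ — a contradiction. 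This gives $\Sigma_{\tilde F}=\partial B\sqcup\mathcal F$, continuity of $H^\pm$ on $I$, and $B=\op{epi}(H^-)\cap\op{hyp}(H^+)$; finally $B$ deformation retracts onto the graph of $H^-$ via $(x,y)\mapsto(x,(1-t)y+tH^-(x))$, so $B$, and hence $F(M)=g^{-1}(B)$, is contractible. Transporting by $g^{-1}$ yields (1) and (2).

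I expect the genuine obstacle to be the Morse‑theoretic core: proving that $J$ is Morse with no critical points of index $1$ or $3$ — which is exactly where the no‑vertical‑tangency hypothesis is used, both to identify $\operatorname{Crit}(J)$ with the rank‑zero set and to force non‑degeneracy at the elliptic‑elliptic points — together with the Morse‑Bott argument ruling transversally elliptic values out of $\operatorname{int}B$. Relative to the compact case (Theorem~\ref{theo:main-image0}) the remaining work is purely the bookkeeping of non‑compactness: one must check that every compactness invoked above — of $J^{-1}(x)$, of the sublevel sets of $J$, of $J^{-1}(x_0)$ for $x_0$ a regular value, and of the maximizing sequences — is supplied by properness of $F$ together with the proper‑cone condition (i), after which the arguments are identical to the compact case.
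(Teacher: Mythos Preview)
Your approach is correct and follows essentially the same route as the paper, though you unpack inline what the paper has already packaged into intermediate results. The paper's proof is three lines: Proposition~\ref{prop:J-connected} gives that $J$ is Morse--Bott with connected level sets (this in turn rests on Theorem~\ref{prop:bott} and Proposition~\ref{prop:bott-index}, which carry out precisely your normal-form index computations, and on Proposition~\ref{prop:levelset} for the connectedness), and then Theorem~\ref{imagemomentum:theorem} gives contractibility and the $\op{epi}/\op{hyp}$ description. Your direct computation that $J$ is in fact Morse (not merely Morse--Bott) is correct under the no-vertical-tangency hypothesis, since rank~$1$ points are then regular for $J$; the paper's more general Morse--Bott formulation is only needed when one allows non-degenerate outward contacts, as in Theorem~\ref{theo:striking}.

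Two small differences are worth noting. First, for the continuity of $H^\pm$ you argue via the local models on $\partial B$, whereas the paper proves it abstractly in Theorem~\ref{imagemomentum:theorem} from closedness of the image plus connectedness of the $J$-fibers; the paper's argument is cleaner and avoids having to know in advance that every boundary point carries an elliptic model. Second, you address part~(1) (the decomposition $\Sigma_F=\partial(F(M))\sqcup\mathcal F$) explicitly by invoking Theorem~\ref{theo:main-connectivity} and then re-running the ``no interior transversally-elliptic values'' argument; the paper's short proof does not spell this out, but the conclusion is already contained in Step~2 of Theorem~\ref{theo:fibers} together with implication $\textup{(i)}\Rightarrow\textup{(iv)}$ of Theorem~\ref{theo:interior}. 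So your write-up is more self-contained on this point.
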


Note that Theorem \ref{theo:main-image} clearly implies Theorem
\ref{theo:main-image0}. The rest of this paper is devoted to proving
Theorem \ref{theo:main-connectivity}, Theorem \ref{theo:striking} and
Theorem \ref{theo:main-image}.
\\
\\
{\bf Acknowledgements.} We thank Denis Auroux, Thomas Baird, and
Helmut Hofer for enlightening discussions. The authors are grateful to
Helmut Hofer for his essential support that made it possible for TSR
and VNS to visit AP at the Institute for Advanced Study during the
Winter and Summer of 2011, where a part of this paper was
written. Additional financial support for these visits was provided by
Washington University in St Louis and by NSF.  The authors also thank
MSRI, the Mathematisches Forschungsinstitut Oberwolfach, Washington
University in St Louis, and the Universit\'e de Rennes I for their
hospitality at different stages during the preparation of this paper
in 2009, 2010, and 2011.

AP was partly supported by an NSF Postdoctoral Fellowship, NSF Grants
DMS-0965738 and DMS-0635607, an NSF CAREER Award, a Leibniz Fellowship,
Spanish Ministry of Science Grant MTM 2010-21186-C02-01, 
and by the CSIC (Spanish National Research Council).

TSR. was partly supported by a MSRI membership and Swiss NSF grant
200020-132410.

VNS was partly supported by the NONAa grant from the French ANR and
the Institut Universitaire de France.
\\
\\
The paper combines a number of results
to arrive at the proofs of these theorems. The following diagram
describes the structure of the paper.

$$
\xymatrix{ \text{\bf Theorem~\ref{singularities_theorem}} \ar @<-1cm>
  @/_2cm/ [dddddddd] \ar @/_1.5cm/ [ddd] \ar[d] \ar[rddd] &
  & & \\
  \text{\bf Theorem~\ref{theo:interior}} \ar @/^1.7cm/ [ddddd] &
  \text{Proposition~\ref{prop:strata}} \ar[d] & & \\
  &
  \text{\bf Theorem~\ref{prop:bott}} \ar[d] & & & \\
  \text{Lemma~\ref{lemm:semibounded}} \ar[d] &
  \text{Proposition~\ref{prop:bott-index}} \ar[d] \ar[dddrr]& & \\
  \text{Proposition~\ref{prop:levelset}} \ar @/_1.2cm/ [dd] &
  \text{Proposition~\ref{prop:J-connected}} \ar[dd] \ar @/^1cm/
  [dddd]& & \\
  \text{Lemma~\ref{tricky}} \ar[d] & & & \\
  \text{\bf Theorem~\ref{theo:fibers}} \ar[r]& \text{\bf
    Theorem~\ref{theo:main-connectivity}} \ar[r] & \text{\bf
    Theorem~\ref{theo:main-connectivity0}} \ar[r] & \text{\bf Theorem~\ref{theo:striking}}\\
  \text{Lemma~\ref{lemma:critical-set}} \ar[d] & & & \\
  \text{\bf Theorem~\ref{imagemomentum:theorem}} \ar[r] & \text{\bf
    Theorem~\ref{theo:main-image}} \ar[r] & \text{\bf
    Theorem~\ref{theo:main-image0}} }
$$

\section{Basic properties of almost-toric systems} \label{sec2}

In this section we prove some basic results that we need in of Section
\ref{fibers:sec} and Section \ref{sec3}. Let $(M, \omega)$ be a
connected symplectic 4-manifold.

\subsection*{Toric type maps}

A smooth map $F \colon M \to \mathbb{R}^2$ is \emph{toric} if there
exists an effective, integrable Hamiltonian $\T^2$-action on $M$ whose
momentum map is $F$.  It was proven in \cite{LeMeToWo1998} that if $F$
is a proper momentum map for a Hamiltonian $\T^2$-action, then the
fibers of $F$ are connected and the image of $F$ is a rational convex
polygon.

\subsection*{Almost-toric systems}
We shall be interested in maps $F: M \rightarrow \mathbb{R}^2$ that
are not toric yet retain enough useful topological properties.  In the
analysis carried out in the paper we shall need the concept of
non-degeneracy in the sense of Williamson of a smooth map from a
4-dimensional phase space to the plane.

\begin{definition} \label{def:nondegpoi} Suppose that $(M, \omega)$ is
  a connected symplectic four-manifold.  Let $F=(f_1,f_2)$ be an
  integrable system on $(M, \omega)$, and $m \in M $ a critical point
  of $F$. If $\DD_mF=0$, then $m$ is called \emph{non-degenerate} if
  the Hessians $\operatorname{Hess}f_j(m)$ span a Cartan subalgebra of
  the symplectic Lie algebra of quadratic forms on the tangent space
  $({\rm T}_m M, \omega_m)$. If
  $\operatorname{rank}\left(\DD_mF\right)=1$ one can assume that
  $\DD_m f_1 \neq0$.  Let $\iota \colon S \to M$ be an embedded local
  $2$-dimensional symplectic submanifold through $m$ such that ${\rm
    T}_m S\subset \ker(\DD_m f_1)$ and ${\rm T}_mS$ is transversal to
  $\mathcal{H}_{f_1}$. The critical point $m$ of $F$ is called
  \textit{transversally non-degenerate} if
  $\operatorname{Hess}(\iota^\ast f_2)(m)$ is a non-degenerate
  symmetric bilinear form on ${\rm T}_m S$.
\end{definition}

\begin{remark}
  One can check that Definition~\ref{def:nondegpoi} does not depend on
  the choice of $S$. The existence of $S$ is guaranteed by the
  classical Hamiltonian Flow Box theorem (see e.g., \cite[Theorem
  5.2.19]{AbMa1978}; this result is also called the
  Darboux-Caratheodory theorem~\cite[Theorem 4.1]{PeVN2011}).  It
  guarantees that the condition $\DD_m f_1(m) \neq 0$ ensures the
  existence of a symplectic chart $(x_1,x_2, \xi_1,\xi_2)$ on $M$
  centered at $m$, i.e., $x_i(m)=0, \xi_i(m)=0$, such that
  $\mathcal{H}_{f_1} = \partial/\partial x_1$ and
  $\xi_1=f_1-f_1(m)$. Therefore, since $\ker \left(\DD_m f_1 \right) =
  \operatorname{span} \{\partial/\partial x_1, \partial/\partial x_2,
  \partial/\partial \xi_2\}$, $S$ can be taken to be the local
  embedded symplectic submanifold defined by the coordinates
  $(x_2,\xi_2)$.
\end{remark}

Definition \ref{def:nondegpoi} concerns symplectic four-manifolds,
which is the case relevant to the present paper. For the notion of
non-degeneracy of a critical point in arbitrary dimension see
\cite{vey}, \cite[Section 3]{san-mono}.  Non-degenerate critical
points can be characterized (see \cite{Eliasson1984, Eliasson1990,
  VNWa2010}) using the Williamson normal form \cite{Williamson1936}.
The analytic version of the following theorem by Eliasson is due to
Vey~\cite{vey}.

\begin{figure} [h]
  \centering
  \includegraphics[width=15cm]{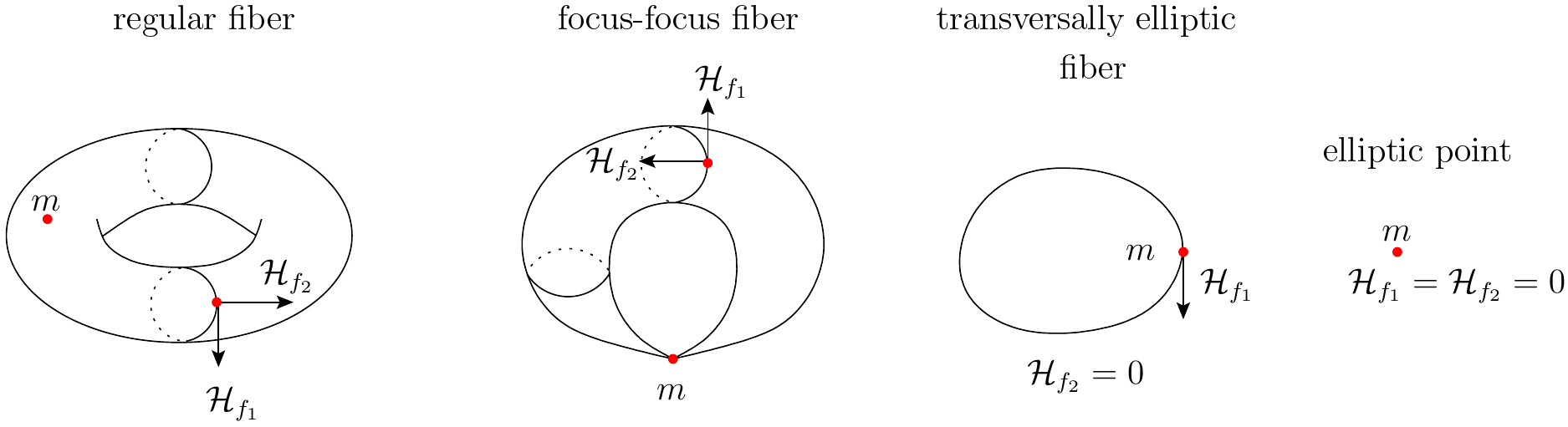}
  \caption{The figures show some possible singularities of a
    integrable system. On the left most figure, $m$ is a regular point
    (rank 2); on the second figure, $m$ is a focus-focus point (rank
    $0$); on the third one, $m$ is a transversally elliptic
    singularity (rank $1$); on the right most figure, $m$ is an
    elliptic-elliptic point (rank 0).}
  \label{singularities}
\end{figure}

\begin{theorem}[H. Eliasson 1990] \label{singularities_theorem} The
  non\--degenerate critical points of a completely integrable system
  $F \colon M \to \mathbb{R}^n$ are linearizable, i.e. if $m \in M$ is
  a non-degenerate critical point of the completely integrable system
  $F=(f_1,\,\ldots,f_n): M \rightarrow \mathbb{R}^n$ then there exist
  local symplectic coordinates $(x_1,\, \ldots,x_n,\, \xi_1,\,
  \ldots,\, \xi_n)$ about $m$, in which $m$ is represented as
  $(0,\,\ldots,\, 0)$, such that $\{f_i,\,q_j\}=0$, for all indices
  $i,\,j$, where we have the following possibilities for the
  components $q_1,\,\ldots,\,q_n$, each of which is defined on a small
  neighborhood of $(0,\,\ldots,\,0)$ in $\mathbb{R}^n$:
  \begin{itemize}
  \item[{\rm (i)}] Elliptic component: $q_j = (x_j^2 + \xi_j^2)/2$,
    where $j$ may take any value $1 \le j \le n$.
  \item[{\rm (ii)}] Hyperbolic component: $q_j = x_j \xi_j$, where $j$
    may take any value $1 \le j \le n$.
  \item[{\rm (iii)}] Focus\--focus component: $q_{j-1}=x_{j-1}\,
    \xi_{j} - x_{j}\, \xi_{j-1}$ and $q_{j} =x_{j-1}\, \xi_{j-1}
    +x_{j}\, \xi_{j}$ where $j$ may take any value $2 \le j \le n-1$
    (note that this component appears as ``pairs'').
  \item[{\rm (iv)}] Non\--singular component: $q_{j} = \xi_{j}$, where
    $j$ may take any value $1 \le j \le n$.
  \end{itemize}
  Moreover if $m$ does not have any hyperbolic component, then the
  system of commuting equations $\{f_i,\,q_j\}=0$, for all indices
  $i,\,j$, may be replaced by the single equation
 $$
 (F-F(m))\circ \varphi = g \circ (q_1,\, \ldots,\,q_n),
 $$ 
 where $\varphi=(x_1,\, \ldots,x_n,\, \xi_1,\, \ldots,\, \xi_n)^{-1}$
 and $g$ is a diffeomorphism from a small neighborhood of the origin
 in $\mathbb{R}^n$ into another such neighborhood, such that $g(0,\,
 \ldots,\,0)=(0,\,\ldots,\,0)$.
\end{theorem}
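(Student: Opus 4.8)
The statement has two parts: the linearization proper (items (i)--(iv)), which is Eliasson's theorem, and the ``moreover'' reduction to a single equation $(F-F(m))\circ\varphi=g\circ(q_1,\ldots,q_n)$ in the absence of hyperbolic blocks. For the first part the plan is to follow the classical strategy of Williamson, Vey and Eliasson. \textbf{Step 1 (linear model).} By Williamson's classification of Cartan subalgebras of the Lie algebra $\mathfrak{sp}(2n,\mathbb{R})$ of quadratic forms, a linear symplectic change of coordinates puts the span of the Hessians $\operatorname{Hess}f_j(m)$ (when $\operatorname{rank}\mathrm{d}_mF=0$), or the appropriate span of transversal Hessians otherwise, into standard position, so that the quadratic parts of suitable commuting generators are exactly the $q_j$ listed in (i)--(iv). \textbf{Step 2 (reduction of the rank).} If $k:=\operatorname{rank}\mathrm{d}_mF>0$, the Hamiltonian flow-box theorem (Darboux--Carath\'eodory) peels off $k$ non-singular components $q_j=\xi_j$ and, after symplectic reduction by the corresponding free $\mathbb{R}^k$-action, reduces to a rank-zero singularity in dimension $2(n-k)$. \textbf{Step 3 (formal normal form).} A Birkhoff-type induction on the polynomial degree yields a formal symplectomorphism conjugating $F-F(m)$ to a formal power series in $(q_1,\ldots,q_n)$; the homological equation at each degree is solvable because the centralizer of the Cartan subalgebra inside the homogeneous polynomials of that degree is precisely the polynomials in the $q_j$. \textbf{Step 4 (smooth realization).} Borel summation produces an actual symplectomorphism realizing the formal normal form up to a flat error, and a path/homotopy (Moser) argument absorbs the flat error; in the analytic category one replaces this by Vey's convergence argument \cite{vey}, and for the focus-focus block in the smooth category one invokes the refinement of \cite{VNWa2010}.

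For the ``moreover'' part, assuming no hyperbolic block, the plan is as follows. \textbf{(a)} First I would show that every germ at $0\in\mathbb{R}^{2n}$ of a smooth function $h$ with $\{h,q_j\}=0$ for all $j$ is of the form $h=g(q_1,\ldots,q_n)$ for a smooth germ $g$ at $0\in\mathbb{R}^n$: the elliptic components generate periodic flows, i.e.\ a torus action, and Schwarz's theorem on smooth invariants of compact group actions handles invariance under these, while the focus-focus and non-singular components generate free $\mathbb{R}$-actions off a closed set of codimension at least one, so invariant functions descend to the orbit space and smoothness across the singular fibers is recovered by a Malgrange/Hadamard-type division argument; this is exactly where the hyperbolic case would fail, since there the commutant is strictly larger than the functions of $q$. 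Applying this to each component of $(F-F(m))\circ\varphi$, which commutes with all the $q_j$ by Step 3, gives $(F-F(m))\circ\varphi=g\circ(q_1,\ldots,q_n)$ with $g(0)=0$. \textbf{(b)} To see $g$ is a local diffeomorphism, I would examine first- and second-order terms at the origin: on the non-singular directions the chain rule for $\mathrm{d}_mF$ forces the relevant block of the Jacobian $\big(\partial_j g_i(0)\big)$ to be invertible (because $\mathrm{d}_mF$ has rank $k$), and differentiating twice gives $\operatorname{Hess}_m(f_i\circ\varphi)=\sum_j \partial_j g_i(0)\,\operatorname{Hess} q_j$ on the reduced factor, so the complementary block is the change-of-basis matrix between two bases of the Cartan subalgebra, hence invertible; the Jacobian is block-triangular with invertible blocks.

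The step I expect to be the genuine obstacle is \textbf{Step 4}: upgrading the formal normal form to an honest smooth symplectic conjugation, i.e.\ proving that the relevant cohomology vanishes (equivalently, solving the homological equation with a flat right-hand side and controlling the resulting isotopy). This is precisely the delicate point in Eliasson's argument, the hyperbolic and focus-focus blocks being the hard cases, and the focus-focus one was only fully settled later. Since in this paper Theorem~\ref{singularities_theorem} is used as an input, I would state it with references to \cite{Eliasson1984, Eliasson1990, vey, VNWa2010} and carry out in detail only parts (a)--(b) of the ``moreover'' reformulation, which are elementary once the linear model and the formal normal form of Steps 1--3 are available; a secondary, more routine, technical point is the smoothness of $g$ across the singular fibers in part (a).
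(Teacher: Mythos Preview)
The paper does not prove this theorem at all: it is stated as a result of Eliasson (with the analytic case due to Vey) and used as a black box, with references to \cite{Eliasson1984, Eliasson1990, vey, VNWa2010}. You anticipate this correctly in your final paragraph, so your proposal is in agreement with the paper's treatment; your sketch of Steps~1--4 and of the ``moreover'' part is an accurate summary of how the literature proves the result, but none of it appears in the paper itself.
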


If the dimension of $M$ is $4$ and $F$ has no hyperbolic singularities
-- which is the case we treat in this paper -- we
have the following possibilities for the map $(q_1,\,q_2)$, depending
on the rank of the critical point:
\begin{itemize}
\item[{\rm (1)}] if $m$ is a critical point of $F$ of rank zero, then
  $q_j$ is one of
  \begin{itemize}
  \item[{\rm (i)}] $q_1 = (x_1^2 + \xi_1^2)/2$ and $q_2 = (x_2^2 +
    \xi_2^2)/2$.
  \item[{\rm (ii)}] $q_1=x_1\xi_2 - x_2\xi_1$ and $q_2 =x_1\xi_1
    +x_2\xi_2$; \,\, on the other hand,
  \end{itemize}
\item[(2)] if $m$ is a critical point of $F$ of rank one, then
  \begin{itemize}
  \item[{\rm (iii)}] $q_1 = (x_1^2 + \xi_1^2)/2$ and $q_2 = \xi_2$.
  \end{itemize}
\end{itemize}
In this case, a non\--degenerate critical point is respectively called
\emph{elliptic\--elliptic, focus\--focus}, or
\emph{transversally\--elliptic} if both components $q_1,\, q_2$ are of
elliptic type, $q_1,\,q_2$ together correspond to a focus\--focus
component, or one component is of elliptic type and the other
component is $\xi_1$ or $\xi_2$, respectively.

Similar definitions hold for \emph{transversally-hyperbolic,
  hyperbolic-elliptic} and \emph{hyperbolic\--hyperbolic} non-degenerate
critical points.

\begin{definition}
  Suppose that $(M, \omega)$ is a connected symplectic
  four-manifold. An integrable system $F \colon M \to \mathbb{R}^2$ is
  called \emph{almost-toric} if all the singularities are
  non-degenerate without hyperbolic components.
\end{definition}

\begin{remark} \label{prop:elliptic} Suppose that $(M, \omega)$ is a
  connected symplectic four-manifold. Let $F \colon M \to
  \mathbb{R}^2$ be an integrable system. If $F$ is a toric integrable
  system, then $F$ is almost-toric, with only elliptic
  singularities. This follows from the fact that a torus action is
  linearizable near a fixed point; see, for instance
  \cite{Delzant1988}.
\end{remark}

A version of the following result is proven in \cite{VN2007} for
almost-toric systems for which the map $F $ is proper. Here we replace
the condition of $F$ being proper by the condition that $F(M)$ is a
closed subset of $\mathbb{R}^2$; this introduces additional
subtleties. Our proof here is independent of the argument in
\cite{VN2007}.

\begin{theorem}
  \label{theo:interior}
  Suppose that $(M, \omega)$ is a connected symplectic four-manifold.
  Assume that $F \colon M \to \R^2$ is an almost\--toric integrable
  system with $B:=F(M)$ closed. Then the set of focus\--focus critical
  values is countable, i.e. we may write it as $\{c_i\mid\, i\in I\}$,
  where $I\subset \NM$. Consider the following statements:
  \begin{itemize}
  \item[{\rm (i)}] the fibers of $F$ are connected;
  \item[{\rm (ii)}] the set $B_r$ or regular values of $F$ is
    connected;
  \item[{\rm (iii)}] for any value $c$ of $F$, for any sufficiently
    small disc $D$ centered at $c$, $B_r\cap D$ is connected;
  \item[{\rm (iv)}] the set of regular values
    is $B_r=\mathring{B}\setminus\{c_i\mid\, i\in I\}$.
    Moreover, the topological boundary $\partial B $ of $B$ consists
    precisely of the values $F(m)$, where $m$ is a critical point of
    elliptic-elliptic or transversally elliptic type.
  \end{itemize}
  Then statement {\rm (i)} implies statement {\rm (ii)}, statement
  {\rm (iii)} implies statement {\rm (iv)}, and statement {\rm (iv)}
  implies statement {\rm (ii)}.

  If in addition $F$ is proper, then statement \textup{(i)} implies
  statement \textup{(iv)}.

\end{theorem}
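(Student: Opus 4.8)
The plan is to reduce everything to Eliasson's normal form (Theorem~\ref{singularities_theorem}): for an almost-toric $F$, near every $m\in M$ the map $F-F(m)$ is, after a symplectomorphism and a local diffeomorphism $g$ of the target, the model $g\circ(q_1,q_2)$ with $(q_1,q_2)$ one of the four non-hyperbolic germs. First I would read off from each model the germ at $c=F(m)$ of $B:=F(M)$ and of the set of regular values: a full neighbourhood of $c$ in the regular case, and also in the focus-focus case (the focus-focus pair $(q_1,q_2)$ is submersive off the origin and its image is a full neighbourhood of $0$); a closed half-plane whose boundary edge is the image of the transversally elliptic stratum through $m$; a closed quadrant with vertex $c$ in the elliptic-elliptic case. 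Consequences I will use repeatedly: regular values and focus-focus values lie in $\mathring{B}$; by an explicit computation the fibre over a focus-focus value contains regular points (e.g. the plane $\{\xi_1=\xi_2=0\}$ of the Eliasson chart, away from $(x_1,x_2)=0$, consists of regular points); and the critical set $\Sigma_F$ is closed and is the union of the $2$-dimensional transversally elliptic strata with the discrete set of rank-$0$ points, so $\dim\Sigma_F\le\dim M-2$ and hence $M^\circ:=M\setminus\Sigma_F$ is connected because $M$ is. Countability of the focus-focus values is then immediate: in the focus-focus model the origin is the only critical point of $(q_1,q_2)$, so focus-focus points are isolated in $\Sigma_F$, and since $M$ is second countable there are at most countably many; write their values as $\{c_i\mid i\in I\}$, $I\subseteq\NM$.

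For (iii)$\Rightarrow$(iv) I would argue locally. Every value of $F$ has one of the four types, and regular and focus-focus values are interior, so $\partial B$ can only contain transversally elliptic and elliptic-elliptic values. For the reverse inclusion: if a transversally elliptic value $c$ lay in $\mathring{B}$, then for every small disc $D$ about $c$ the set $B_r\cap D$ is $D$ minus $F(\Sigma_F)$, which near $c$ contains an embedded arc through $c$ (the image of the stratum through a transversally elliptic point of $F^{-1}(c)$), so $B_r\cap D$ is disconnected; an elliptic-elliptic value in $\mathring{B}$ likewise forces $B_r\cap D$ to avoid the two arcs emanating from $c$, again disconnected. Both contradict (iii). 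Hence $\partial B$ is exactly the set of transversally elliptic and elliptic-elliptic values, so the only interior critical values are the $c_i$, i.e. $B_r=\mathring{B}\setminus\{c_i\mid i\in I\}$; this is (iv).

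For (iv)$\Rightarrow$(ii), and likewise for (i)$\Rightarrow$(ii), the common mechanism is the identity $F(M^\circ)=\mathring{B}$. The inclusion $\subseteq$ is the open mapping property at a regular point. For $\supseteq$, let $c\in\mathring{B}$; if $F^{-1}(c)$ contained no regular point it would be entirely critical, hence (focus-focus fibres carry regular points) would meet only the elliptic strata, and one must deduce $c\in\partial B$, a contradiction. Under hypothesis~(iv) this deduction is free. Under hypothesis~(i) one uses connectedness of $F^{-1}(c)$: near a transversally elliptic point the whole local fibre consists of transversally elliptic points, and near an elliptic-elliptic point the local fibre is a single point, so in a connected fibre without focus-focus points the set of critical points is open and closed; thus $F^{-1}(c)$ is either a single elliptic-elliptic point or a connected $1$-manifold of transversally elliptic points lying in one stratum, and in both cases the quadrant/half-plane germ of $B$ along it (the co-orientation being constant on the connected fibre) keeps $c$ on $\partial B$. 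Either way $F(M^\circ)=\mathring{B}$ is connected; and since $B_r=\mathring{B}\setminus\{c_i\mid i\in I\}$ — directly from (iv), and obtained as above from (i), since the only interior critical values turn out to be the $c_i$ — and a connected open subset of $\R^2$ minus a countable set is still connected, $B_r$ is connected, which is (ii).

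The step I expect to be the real obstacle is precisely the one inside the last argument: reconstructing the global topology of a connected singular fibre from the local Eliasson models — ruling out fibres that mix an elliptic stratum with a focus-focus point or with a regular piece, and controlling how the local half-plane germs fit together around $c$ so that $c$ is genuinely forced onto $\partial B$ rather than being surrounded by far-away sheets of $M$. For general (non-proper) $F$ with $F(M)$ closed this needs the careful global analysis developed in the rest of the paper. When $F$ is proper one has $F(\Sigma_F)$ closed, $B_r$ open, and $F\colon F^{-1}(B_r)\to B_r$ a locally trivial (Ehresmann) fibration with the connected fibres of (i); then a tube-lemma argument on small saturated neighbourhoods makes the gluing unconditional, so that a transversally elliptic value has a circle fibre and an elliptic-elliptic value a point fibre, both lying on $\partial B$. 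Together with the countability of the $c_i$ this yields (iv), which is why the strongest implication (i)$\Rightarrow$(iv) is asserted only in the proper case.
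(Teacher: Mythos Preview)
Your arguments for countability, for (iii)$\Rightarrow$(iv), for (iv)$\Rightarrow$(ii), and for (i)$\Rightarrow$(iv) in the proper case are essentially the paper's. The genuine gap is in (i)$\Rightarrow$(ii) without properness. Your route is to establish $F(M^\circ)=\mathring{B}$ and $B_r=\mathring{B}\setminus\{c_i\}$; together these amount to (i)$\Rightarrow$(iv), which the theorem does \emph{not} claim without properness. You correctly identify the obstacle --- forcing an elliptic value onto $\partial B$ when far-away sheets of $M$ might cover the other side --- and propose to defer it to ``the careful global analysis developed in the rest of the paper'', but Theorem~\ref{theo:interior} sits logically \emph{before} that analysis (look at the dependency diagram following the acknowledgements), so the deferral would be circular.

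The paper avoids the detour entirely: under (i) it removes from $M$ not the critical \emph{points} but the critical \emph{fibres}. Since a connected critical fibre cannot contain a regular Liouville torus, each one is a point, a circle, or a (possibly multiply) pinched torus of dimension $\le 2$; the union over the (at most one-dimensional) set of critical values still has codimension $\ge 2$, so its complement $F^{-1}(B_r)$ is connected, and hence so is its image $B_r$. Alternatively, your own ingredients already give a self-contained proof if you abandon the claim $F(M^\circ)=\mathring{B}$: you showed that under (i) a connected fibre containing an elliptic point is \emph{entirely} elliptic and so contains no regular point; combined with the fact that focus-focus fibres do contain regular points, this yields $F(M^\circ)=B_r\cup\{c_i\}$ on the nose. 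Since $F|_{M^\circ}$ is a submersion from a connected open set, $F(M^\circ)$ is open and connected, and removing the countable set $\{c_i\}$ gives (ii) directly --- no properness, no appeal to $\mathring{B}$, and no forward reference.
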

It is interesting to note that the statement is optimal in that no
other implication is true (except \textup{(iii)}$\impliq$\textup{(ii)}
which is a consequence of the stated implications). This gives an idea
of the various pathologies that can occur for an almost-toric system.
\begin{proof}[Proof of Theorem~\ref{theo:interior}]
  From the local normal form~\ref{singularities_theorem}, focus-focus
  critical points are isolated, and hence the set of focus-focus
  critical points is countable (remember that all our manifolds are
  second countable). Moreover, the image of a focus-focus point is
  necessarily in the interior of $B$.

  Let us show that
  \begin{equation}
    B_r \subset \mathring{B} \setminus \{c_i\mid\, i\in I\}.
    \label{equ:B_r}  
  \end{equation}
  This is equivalent to showing that any value in $\partial B$ is a
  critical value of $F$. Since $B$ is closed, $\partial B \subset B$,
  so for every $c \in \partial B$ we have that $F^{-1}(c)$ is
  nonempty. By the Darboux-Carath\'eodory theorem, the image of a
  regular point must be in the interior of $B$, therefore $F^{-1}(c)$
  cannot contain any regular point: the boundary can contain only
  singular values.

  Since a point in $\partial B$ cannot be the image of a focus-focus
  singularity, it has to be the image of a transversally elliptic or
  an elliptic-elliptic singularity.

  We now prove the implications stated in the theorem.

  \noindent ${\rm (i)}\Rightarrow {\rm (ii)}$: Since $F$ is
  almost-toric, the singular fibers are either points
  (elliptic-elliptic), one-dimensional submanifolds (codimension 1
  elliptic) or a stratified manifold of maximal dimension 2
  (focus-focus and elliptic). This is because none of the critical
  fibers can contain regular tori since the fibers are assumed to be
  connected by hypothesis (i).  The only critical values that can
  appear in one-dimensional families are elliptic and
  elliptic-elliptic critical values (see Figure
  \ref{singularities_in_image}). The focus- focus singularities are
  isolated. Therefore the union of all critical fibers is a locally
  finite union of stratified manifolds of codimension at least 2;
  therefore this union has codimension at least 2. Hence the
  complement is connected and therefore its image by $F$ is also
  connected.

  \noindent ${\rm (iii)} \Rightarrow {\rm (iv)}$: There is no embedded
  line segment of critical values in the interior of $B$ (which would
  come from codimension 1 elliptic singularities) because this is in
  contradiction with the hypothesis of local connectedness
  \textup{(iii)}.  Therefore $\mathring{B} \setminus \{c_i\mid\, i\in
  I\} \subset B_r$.  Hence by~\eqref{equ:B_r}, $$\mathring{B}
  \setminus \{c_i\mid\, i\in I\} = B_r,$$ as desired, and all the
  elliptic critical values must lie in $\partial B$.

  \noindent ${\rm (iv)} \Rightarrow {\rm (ii)}$: As we saw above,
  $F^{-1}(\partial B)$ contains only critical points, of elliptic
  type. Because of the local normal form, the set of rank 1 elliptic
  critical points in $M$ form a 2-dimensional symplectic submanifold
  with boundary, and this boundary is equal to the discrete set of
  rank 0 elliptic points. Therefore $M\setminus F^{-1}(\partial B)$ is
  connected. This set is equal to $F^{-1}(\mathring{B})$, which in
  turn implies that $\mathring{B}$ is connected. By
  hypothesis~\textup{(iv)}, this ensures that $B_r$ is connected.

  Assume for the rest of the proof that $F$ is proper.
 
  \noindent ${\rm (i)}\Rightarrow {\rm (iv)}$: Assume \textup{(iv)}
  does not hold. In view of~\eqref{equ:B_r}, there exists an elliptic
  singularity (of rank 0 or 1) $c$ in the interior of $B$. Let
  $\Lambda$ be the corresponding fiber. Since it is connected, it must
  entirely consist of elliptic points (this comes from the normal form
  Theorem~\ref{singularities_theorem}). The normal form also implies
  that $c$ must be contained in an embedded line segment of elliptic
  singularities, and the points in a open neighborhood $\Omega$ of
  $\Lambda$ are sent by $F$ in only one side of this segment. Since
  $c$ is in the interior of $B$, there is a sequence $c_k\in B$ on the
  other side of the line segment that converges to $c$ as
  $k\to\infty$. Hence there is a sequence $m_k\in M\setminus\Omega$
  such that $F(m_k)=c_k$. Since $F$ is proper, one can assume that
  $m_k$ converges to a point $m$ (necessarily in
  $M\setminus\Omega$). By continuity of $F$, $m$ belongs to the fiber
  over $c$, and thus to $\Omega$, which is a contradiction.

\end{proof}

\begin{figure}[h]
  \centering
  \includegraphics[width=12.5cm]{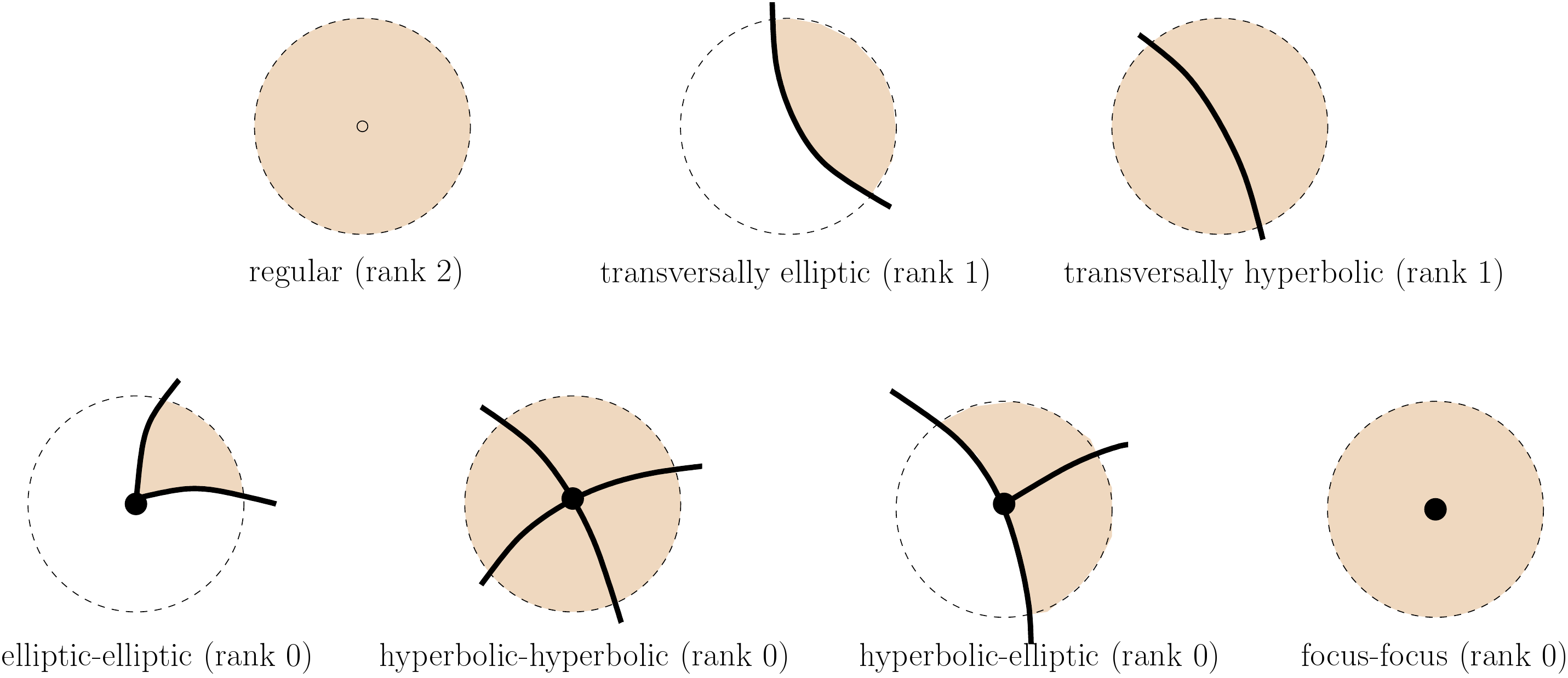}
  \caption{The local classification of the range of $F$ in an open
    disk.}
  \label{singularities_in_image}
\end{figure}

\section{The fibers of an almost\--toric system} \label{fibers:sec}

In this section we study the structure of the fibers of an
almost-toric system.

We shall need below the definition and basic properties of the
bifurcation set of a smooth map.
 
\begin{definition} \label{def:bifurcation} Let $M$ and $N$ be smooth
  manifolds. A smooth map $f: M\rightarrow N$ is said to be
  \emph{locally trivial} at $n_0 \in f(M)$ if there is an open
  neighborhood $U \subset N$ of $n_0$ such that $f ^{-1}(n)$ is a
  smooth submanifold of $M$ for each $n\in U$ and there is a smooth
  map $h: f^{-1}(U)\rightarrow f ^{-1}(y_0)$ such that $f\times
  h:f^{-1}(U)\rightarrow U \times f ^{-1}(n_0)$ is a
  diffeomorphism. The \emph{bifurcation set} $\Sigma_f$ consists of
  all the points of $N$ where $f$ is not locally trivial.
\end{definition}

Note, in particular, that $h|_{f ^{-1}(n)}:f ^{-1}(n) \rightarrow
f^{-1}(n_0)$ is a diffeomorphism for every $n \in U $. Also, the set
of points where $f $ is locally trivial is open in $N$.

\begin{remark}
  Recall that $\Sigma_f $ is a closed subset of $N $.  It is well
  known that the set of critical values of $f $ is included in the
  bifurcation set (see \cite[Proposition 4.5.1]{AbMa1978}).  In
  general, the bifurcation set strictly includes the set of critical
  values. This is the case for the momentum-energy map for the
  two-body problem \cite[\S9.8]{AbMa1978}. However (see \cite[Page
  340]{AbMa1978}), if $f \colon M \to N$ is a smooth \emph{proper}
  map, then the bifurcation set of $f$ is equal to the set of critical
  values of $f$.
\end{remark}

 Recall that a smooth map
$f:M \rightarrow \mathbb{R}$ is \emph{Morse} if all its critical
points are non-degenerate.  The smooth map $f$ is \emph{Morse-Bott} if
the critical set of $f$ is a disjoint union of connected submanifolds
$\mathcal{C}_i$ of $M$, on which the Hessian of $f$ is non-degenerate
in the transverse direction, i.e.,
$$
\ker (\operatorname{Hess}_m f) = {\rm T}_m \mathcal{C}_i,\,\,\,
\textup{for all}\,\,i, \,\,\textup{for all} \,\, m \in \mathcal{C}_i.
$$
The \emph{index of} $m$ is the number of negative eigenvalues of
$(\operatorname{Hess}f)(m)$.  

The goal of this section is to find a useful Morse theoretic result,
valid in great generality and interesting on its own, that will
ultimately imply the connectedness of the fibers of an integrable
system (see Theorem \ref{theo:fibers}).  Here we do not rely on
Fomenko's Morse theory~\cite{Fomenko1986}, because we do not want to
select a nonsingular energy surface. Instead, the model is \cite[Lemma
5.51]{McSa1994}; however, the proof given there does not extend to the
non-compact case, as far as we can tell. We thank Helmut Hofer and Thomas
Baird for sharing their insights on Morse theory with us that helped
us in the proof of the following result.

\begin{lemma}
  \label{lemm:semibounded}
  Let $f:M\fleche\RM$ be a Morse-Bott function on a connected manifold
  $M$. Assume $f$ is proper and bounded from below and has no critical
  manifold of index 1. Then the set of critical points of index 0 is
  connected.
\end{lemma}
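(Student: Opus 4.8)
The plan is to use a classical connectedness-via-Morse-theory argument, but carried out carefully so that it survives in the non-compact setting via the properness hypothesis. First I would set up the machinery of descending cells. For each critical submanifold $\mathcal{C}_i$ (necessarily compact if $f$ were proper and bounded, but in general only the sublevel sets $M^a := f^{-1}((-\infty,a])$ are compact), consider the negative gradient flow of $f$ with respect to some complete metric. The standard Morse--Bott deformation lemma says: as $a$ increases past a critical value $v$, the sublevel set $M^{a}$ is obtained, up to homotopy equivalence, from $M^{a-\epsilon}$ by attaching, for each critical manifold $\mathcal{C}_i \subset f^{-1}(v)$, a "disk bundle over $\mathcal{C}_i$" of fiber dimension equal to the index $\lambda_i$ of $\mathcal{C}_i$; since properness is used only on sublevel sets (which are compact by properness plus boundedness below), this lemma applies here verbatim. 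I would cite or briefly reprove this in the form: $M^{a}$ deformation retracts onto $M^{a-\epsilon} \cup_{\phi} (\text{disk bundles})$.

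Next, the key homotopy-theoretic observation: attaching a cell (or disk bundle) of fiber dimension $\lambda \geq 2$ along a map of its boundary sphere bundle does not change the set of connected components, and does not change $\pi_0$ of the result either way; but what I actually need is one step finer. Let $v_0 = \min f$ (attained since $f$ is proper and bounded below, on the compact set $f^{-1}([v_0, v_0+\epsilon])$). The critical set at the minimum level consists of index-$0$ critical manifolds, and $M^{v_0+\epsilon}$ deformation retracts onto the (disjoint union of) these index-$0$ critical manifolds themselves — so if there were two of them, $M^{v_0+\epsilon}$ would be disconnected. Now I pass to larger sublevel sets. By hypothesis there is \emph{no} critical manifold of index $1$. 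When we cross a critical value $v$ and attach a disk bundle of fiber dimension $\lambda_i$, the effect on $\pi_0$: if $\lambda_i = 0$ the critical manifold $\mathcal{C}_i$ appears as a new batch of local minima of $f|_{M^{v+\epsilon}}$ (new components may appear, but these are handled below); if $\lambda_i \geq 2$, the attaching region is a sphere bundle with fibers $S^{\lambda_i - 1}$, connected, so attaching it \emph{cannot disconnect} a component and \emph{cannot merge} fewer than the components it touches — in fact attaching a cell of dimension $\geq 2$ along a connected attaching set can only decrease or preserve $\pi_0$, never increase it, and merges exactly the components met by the (connected) attaching sphere bundle base. The absence of index $1$ is exactly what rules out the one operation that could split a component (attaching a $1$-cell / $0$-sphere-bundle).

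Putting this together, I would argue as follows. Let $Z$ be the set of index-$0$ critical points, $Z = \bigsqcup_j \mathcal{C}_{i_j}$ a disjoint union of compact connected manifolds. Suppose for contradiction $Z$ has at least two components $\mathcal{C}'$, $\mathcal{C}''$. Each index-$0$ critical manifold $\mathcal{C}$ is a strict local minimum of $f$, so for small $\epsilon$ the set $f^{-1}([\min_{\mathcal C} f - \epsilon, \min_{\mathcal C} f])$ near $\mathcal{C}$ deformation retracts to $\mathcal{C}$ and $\mathcal{C}$ sits in its own connected component of a small sublevel set. Now run the sublevel filtration $M^{a}$ from $a \to -\infty$ (where it is empty) up to $a \to +\infty$ (where, by properness and connectedness of $M$, $M^{a} = M$ for $a$ large, hence connected). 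Track the component containing $\mathcal{C}'$ and the one containing $\mathcal{C}''$. At $a = +\infty$ they must have merged. By the previous paragraph, components can only merge when crossing a critical value; and since there is no index-$1$ critical manifold, no critical crossing can create a new component either — so $\pi_0(M^a)$ is generated entirely by the index-$0$ critical manifolds that have been passed, with merges governed by connected attaching bundles. A merge of the $\mathcal{C}'$-component with the $\mathcal{C}''$-component at some critical value $v$ of a critical manifold $\mathcal{C}_k$ of index $\lambda_k \geq 2$ would require the connected base $\mathcal{C}_k$ of the attaching $S^{\lambda_k-1}$-bundle to meet both components in $M^{v-\epsilon}$; this is possible. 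So I cannot literally conclude "no merges": rather, I conclude the contradiction differently — I show that in fact the component of $M^a$ containing any fixed index-$0$ manifold $\mathcal{C}$ \emph{already} contains every other index-$0$ manifold it will ever be joined to, and that ultimately the connectedness of $M = M^\infty$ forces all index-$0$ components into one, which means: the \emph{union} $Z$, though it may have several components \emph{as a subspace of $M$}, must still be connected \emph{if} we never had index $1$ — no, that is the statement itself.

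So the correct finish is: $\pi_0(M)$ surjects onto $\pi_0(Z)$ is false in general; the right statement is that, because there are no index-$0$ and no index-$1$ critical manifolds \emph{except} the true minima, every index-$0$ critical manifold is a \emph{global} minimum of $f$ restricted to its component of $M$ — no. I will instead do the clean version: use the deformation lemma to see $M$ deformation retracts onto the union of \emph{stable manifolds} of all critical points, but by the no-index-$1$ hypothesis, the only critical points with stable manifold of codimension $\leq 1$ are the index-$0$ ones; a standard transversality/general-position argument then shows that $M$ minus (the union of stable manifolds of critical points of index $\geq 2$) deformation retracts onto the union of the open stable manifolds of the index-$0$ critical manifolds, and each such open stable manifold is an open disk bundle over its $\mathcal{C}$, hence connected and homotopy equivalent to $\mathcal{C}$. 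Removing a locally finite union of submanifolds of codimension $\geq 2$ does not disconnect a connected manifold, so $M$ minus those stable manifolds is still connected; hence the disjoint union of the open index-$0$ stable manifolds is connected; hence there is only one of them; hence $Z$ is connected. \textbf{The main obstacle} is precisely this last step — making rigorous, in the possibly non-compact setting and with non-isolated critical manifolds, that the stable manifolds of critical submanifolds of index $\geq 2$ (which may be non-compact and badly embedded) still form a locally finite family of codimension-$\geq 2$ subsets whose removal preserves connectedness; handling local finiteness requires the properness of $f$ (each sublevel set meets only finitely many critical manifolds, since those are compact and the critical values are discrete on compacta), and handling the codimension claim requires the Morse--Bott condition to control $\dim \mathcal{C}_i$ versus its index.
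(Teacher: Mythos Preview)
Your final ``clean version'' --- decompose $M$ into stable manifolds of the negative gradient flow, observe that $W^s(C_k)$ has codimension $k\geq 2$ for $k\geq 2$ by the Morse--Bott lemma, and use that removing a codimension-$\geq 2$ union from a connected manifold leaves it connected --- is exactly the paper's proof. The paper goes there directly: it checks completeness of the negative gradient flow (via properness and boundedness below), writes $M=\bigsqcup_k W^s(C_k)$, and concludes that $W^s(C_0)=M\setminus\bigsqcup_{k\geq 2}W^s(C_k)$ is connected, hence $C_0$ is connected.

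Two remarks. First, your long detour through the sublevel-set attachment picture does not lead anywhere, as you yourself discover: index-$\geq 2$ attachments \emph{can} merge components, so tracking $\pi_0(M^a)$ gives no contradiction. All of that, together with the explicit false starts (``no, that is the statement itself'', ``no.''), should be deleted; begin straightaway with the stable-manifold decomposition. Second, your description of the index-$0$ stable manifold as ``an open disk bundle over $\mathcal{C}$'' is not accurate --- it is the full basin of attraction, which need not be a bundle --- but all you actually need is that it is open, connected, and retracts onto $\mathcal{C}$, which is immediate from the flow. Your identification of the ``main obstacle'' (local finiteness and the codimension claim) is apt; the paper handles local finiteness implicitly via properness (only finitely many critical manifolds below any level) and the codimension via the local Morse--Bott normal form.
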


\begin{proof}
  We endow $M$ with a Riemannian metric.  The negative gradient flow
  of $f$ is complete. Indeed, along the the flow the function $f$
  cannot increase and, by hypothesis, $f$ is bounded from
  below. Therefore, the values of $f$ remain bounded along the
  flow. By properness of $f$, the flow remains in a compact subset of
  $M$ and hence it is complete.

  Let us show, using standard Morse-Bott theory, that the integral
  curve of $- \nabla f$ starting at any point $m\in M$ tends to a
  critical manifold of $f$. In the compact set in $\{x \in M \mid
  f(x)\leq f(m)\}$, there must be a finite number of critical
  manifolds contained . If the integral curve avoids a neighborhood of
  these critical manifolds, by compactness it has a limit point, and
  by continuity the vector field at the limit point must vanish; we
  get a contradiction, thus proving the claim.

  Thus, we have the disjoint union $M = \bigsqcup_{k=0}^n W^s(C_k)$,
  where $C_k$ is the set of critical points of index $k$, and
  $W^s(C_k)$ is its stable manifold:
  \[
  W^s(C_k) := \{m\in M\mid d(\phy_{-\nabla f}^t(m),C_k)\fleche 0
  \text{ as } t\fleche+\infty\},
  \]
  where $d$ is any distance compatible with the topology of $M$ (for
  example, the one induced by the given Riemannian metric on $M$) and
  $t \mapsto \phy_{-\nabla f}^t$ is the flow of the vector field $-
  \nabla f$. Since $f$ has no critical point of index 1, we have
  \[
  C_0 = W^s(C_0) = M \setminus \bigsqcup_{k=2}^n W^s(C_k).
  \]
  The local structure of Morse-Bott singularities given by the
  Morse-Bott lemma~\cite{Bott,Banyaga}) implies that $W^s(C_k)$ is a
  submanifold of codimension $k$ in $M$.  Hence $\bigsqcup_{k=2}^n
  W^s(C_k)$ cannot disconnect $M$.
\end{proof}

\begin{remark}
  Since all local minima of $f$ are in $C_0$, we see that $C_0$ must
  be the set of global minima of $f$; thus $C_0$ must be equal to the
  level set $f^{-1}(f(C_0))$.
\end{remark}

\begin{prop} \label{prop:levelset}

  Let $M$ be a connected smooth manifold and $f:M\fleche\RM$ be a
  proper Morse-Bott function whose indices and co-indices are always
  different from 1. Then the level sets of $f$ are connected.
\end{prop}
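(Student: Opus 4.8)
The plan is to reduce the connectedness of an arbitrary level set $f^{-1}(c)$ to the connectedness of the set of index-$0$ critical points of an auxiliary proper Morse-Bott function, so that Lemma \ref{lemm:semibounded} applies. The obvious obstacle is that Lemma \ref{lemm:semibounded} only speaks about the \emph{minimum} level set $C_0$ of a function bounded below, whereas $f^{-1}(c)$ is a generic level set of a function with no restriction on its range. To overcome this I would consider the function $g := (f-c)^2$. Its critical set consists of the critical set of $f$ together with the level set $f^{-1}(c)$ itself; on $f^{-1}(c)$ the function $g$ attains its global minimum value $0$, and near a regular point of $f^{-1}(c)$ one has $g = (f-c)^2$ in a Morse chart where $f-c$ is a coordinate, so $g$ looks like $t^2$ transversally to $f^{-1}(c)$ — i.e. $f^{-1}(c)$ is a nondegenerate critical submanifold of index $0$. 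The remaining critical manifolds of $g$ are exactly the critical manifolds $\mathcal{C}_i$ of $f$ (with $f\neq c$ on them), where a short computation with $\operatorname{Hess} g = 2(f-c)\operatorname{Hess} f + 2\,\DD f\otimes \DD f = 2(f-c)\operatorname{Hess} f$ shows that the index of $\mathcal{C}_i$ as a critical manifold of $g$ is the index of $\mathcal{C}_i$ for $f$ when $f>c$ there, and the co-index of $\mathcal{C}_i$ for $f$ when $f<c$ there. In either case this number is different from $1$ by hypothesis, and of course $f^{-1}(c)$ has index $0$.

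Next I would check the hypotheses of Lemma \ref{lemm:semibounded} for $g$: it is bounded below by $0$ by construction; it is Morse-Bott by the discussion above (one must verify that near points of $f^{-1}(c)$ that happen to also be critical points of $f$ the transverse Hessian is still nondegenerate — but if $m\in f^{-1}(c)$ is critical for $f$ then $f-c$ vanishes to order $\geq 2$ at $m$, hence $g$ vanishes to order $\geq 4$, so $g$ is \emph{not} Morse-Bott there; this is the point that needs care, see below); and it is proper because $g^{-1}([0,R]) = f^{-1}([c-\sqrt R, c+\sqrt R])$ is compact by properness of $f$. Granting the Morse-Bott property, Lemma \ref{lemm:semibounded} gives that $C_0(g)$ is connected; but by the Remark following that lemma, $C_0(g) = g^{-1}(0) = f^{-1}(c)$, which is exactly what we want.

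The main obstacle is therefore the case $c\in F(\Sigma_f)$, i.e. when $c$ is a critical value of $f$, in which $g=(f-c)^2$ fails to be Morse-Bott along the critical fiber. I would handle this by a perturbation/limiting argument: the set of non-critical values of $f$ is dense (Sard), so choose $c_k\to c$ with each $c_k$ a regular value; the above argument shows each $f^{-1}(c_k)$ is connected, and then one argues that $f^{-1}(c)$ is connected as a ``limit''. For this last step properness is essential: if $f^{-1}(c) = A\sqcup B$ with $A,B$ nonempty, closed (hence compact, by properness) and disjoint, choose disjoint open sets $U\supset A$, $V\supset B$; by properness $f$ is a closed map, so $f(M\setminus(U\cup V))$ is closed and misses $c$, hence for $k$ large $f^{-1}(c_k)\subset U\cup V$, and since $f^{-1}(c_k)$ is connected it lies entirely in $U$ or entirely in $V$; but $f^{-1}(c_k)$ accumulates on all of $f^{-1}(c)$ (again by properness, any neighborhood of $f^{-1}(c)$ contains $f^{-1}(c_k)$ for large $k$ since otherwise we extract a convergent sequence of points $m_k\in f^{-1}(c_k)$ outside that neighborhood whose limit lies in $f^{-1}(c)$), so it must meet both $U$ and $V$ — a contradiction. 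An alternative that avoids Sard entirely is to work directly with $g$ and smooth it near the bad fiber, but the perturbation argument seems cleanest. I expect the bookkeeping of indices for $g$ and the properness arguments to be routine; the only genuine subtlety is ensuring the limiting argument is airtight, which the closed-map property of proper maps into $\mathbb{R}$ supplies.
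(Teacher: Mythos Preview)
Your treatment of regular values --- set $g=(f-c)^2$, verify it is proper, bounded below, Morse--Bott with no index-$1$ critical manifolds, and invoke Lemma~\ref{lemm:semibounded} --- is correct and is exactly what the paper does.

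The gap is in the limiting argument for critical values. You assert that $f^{-1}(c_k)$ ``accumulates on all of $f^{-1}(c)$'' and hence must meet both $U$ and $V$, but the justification you offer is the fiber-continuity inclusion $f^{-1}(c_k)\subset U\cup V$ (which you already used two lines earlier); this says nothing about $f^{-1}(c_k)$ meeting each of $U$ and $V$ separately. That stronger claim can fail pointwise: if $A\subset f^{-1}(c)$ happens to be a coindex-$0$ critical manifold and $c_k>c$, then $f\le c$ on a Morse--Bott neighbourhood of $A$, so $f^{-1}(c_k)$ misses any sufficiently small $U\supset A$. The truly obstructive scenario ($A$ entirely coindex-$0$ and $B$ entirely index-$0$, at the same level) can in fact be ruled out --- it would force $M=\{f<c\}\cup A$ and $\{f>c\}\cup B$ to be a nontrivial open partition of the connected manifold $M$ --- but that is a separate argument you have not supplied; the closed-map property of proper maps alone does not deliver it.

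For comparison, the paper handles critical values by a different device that sidesteps this issue entirely: set $N=M\times S^2$ and $h_\epsilon:=f-c_i+\epsilon z$, where $z$ is the height function on $S^2$. Then $h_\epsilon$ is proper Morse--Bott with indices shifted by $0$ or $2$ (so still no index or coindex $1$), and $0$ is a \emph{regular} value of $h_\epsilon$; the already-proven regular case gives $h_\epsilon^{-1}(0)$ connected, its projection to $M$ is $f^{-1}([c_i-\epsilon,c_i+\epsilon])$, and the nested intersection of these compact connected sets as $\epsilon\downarrow 0$ yields $f^{-1}(c_i)$ connected.
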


\begin{proof}
  Let $c$ be a regular value of $f$ (such a value exists by Sard's
  theorem). Then $g:=(f-c)^2$ is a Morse-Bott function.  On the set
  $\{x \in M \mid f(x)>c\}$, the critical points of $g$ coincide with
  the critical points of $f$ and they have the same index. On the set
  $\{x \in M \mid f(x)<c\}$, the critical points of $g$ also coincide
  with the critical points of $f$ and they have the same coindex. The
  level set $\{x \in M \mid f(x)=c\}$ is clearly a set of critical
  points of index 0 of $g$. Of course, $g$ is bounded from
  below. Thus, by Lemma \ref{lemm:semibounded}, the set of critical
  points of index 0 of $g$ is connected (it may be empty) and hence
  equal to $g^{-1}(0)$. Therefore $f^{-1}(c)$ is connected.  This
  shows that all regular level sets of $f$ are connected.  (As usual,
  a regular level set --- or regular fiber --- is a level set that
  contains only regular points, i.e. the preimage of a regular value.)

  Finally let $c_i$ be a critical value of $f$ (if any). Since $f$ is
  proper and has isolated critical manifolds, the set of critical
  values is discrete. Let $\epsilon_0>0$ such that the interval
  $[c_i-\epsilon_0,c_i+\epsilon_0]$ does not contain any other
  critical value.  Consider the manifold $N:=M\times S^2$, and, for
  any $\epsilon\in(0,\epsilon_0)$, let
  \[
  h_\epsilon:=f-c_i+\epsilon z:N\fleche\RM,
  \]
  where $z$ is the vertical component on the sphere
  $S^2\subset\RM^3$. Notice that $z:S^2\fleche\RM$ is a Morse function
  with indices 0 and 2. Thus $h_\epsilon$ is a Morse-Bott function on
  $N$ with indices and coindices of the same parity as those of
  $f$. Thus no index nor coindex of $h$ can be equal to 1. By the
  first part of the proof, the regular level sets of $h_\epsilon$ must
  be connected. The definition of $\epsilon_0$ implies that $0$ is a
  regular value of $h_\epsilon$. Thus
  \[
  F_\epsilon:=\pi_M(h_\epsilon^{-1}(0)) = \{m\in M\mid \quad
  \abs{f(m)-c_i}\leq \epsilon\}
  \]
  is connected.  Since $f$ is proper, $F_\epsilon$ is also
  compact. Because a non-increasing intersection of compact connected
  sets is connected, we see that
  $f^{-1}(c_i)=\cap_{0<\epsilon\leq\epsilon_0}F_\epsilon$ is
  connected.
\end{proof}

There is no a priori reason why the fibers $F^{-1}(x,\,y)= J^{-1}(x)
\cap H^{-1}(y)$ of $F$ should be connected even if $J$ and $H$ have
connected fibers (let alone if just one of $J$ or $H$ has connected
fibers). However, the following result shows that this conclusion is
holds. To prove it, we need a preparatory lemma which is interesting
on its own.

\begin{lemma} \label{tricky} Let $f:X \to \mathbb{R}^n$ be a map from
  a smooth connected manifold $X$ to $\mathbb{R}^n$. Ler $B_r$ be the
  set of regular values of $f$. Suppose that $f$ has the following
  properties.
  \begin{itemize}
  \item[{\rm (1)}] $f$ is a proper map.
  \item[{\rm (2)}] For every sufficiently small neighborhood $D$ of
    any critical value of $f$, $B_r \cap D$ is connected.
  \item[{\rm (3)}] The regular fibers of $f$ are connected.
  \item[{\rm (4)}] The set $\mathrm{Crit}(f)$ of critical points of
    $f$ has empty interior.
  \end{itemize}
  Then the fibers of $f$ are connected.
\end{lemma}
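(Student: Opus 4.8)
The plan is to fix a value $c_0 \in \mathbb{R}^n$ and show that the fiber $f^{-1}(c_0)$ is connected. Since the set of critical points has empty interior (hypothesis (4)), regular values are dense, so one can find a sequence of regular values $c_k \to c_0$, or rather work with small balls around $c_0$. The first step is to observe that $f^{-1}(c_0)$ can be written as a decreasing intersection $\bigcap_\epsilon f^{-1}(\overline{D_\epsilon(c_0)})$ of preimages of closed balls shrinking to $c_0$; by properness each such preimage is compact, and a decreasing intersection of compact connected sets is connected. So it suffices to show that $f^{-1}(\overline{D_\epsilon(c_0)})$ is connected for all sufficiently small $\epsilon$.

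The core of the argument is then to prove: for a sufficiently small ball $D = D_\epsilon(c_0)$, the set $f^{-1}(D)$ is connected. Here I would exploit hypothesis (2): if $c_0$ is a critical value, then for $D$ small enough $B_r \cap D$ is connected (and if $c_0$ is a regular value, $B_r \cap D$ is just a small ball, trivially connected, after possibly shrinking). Now $f^{-1}(B_r \cap D)$ fibers over the connected base $B_r\cap D$ with fibers that are connected by hypothesis (3); since $B_r \cap D$ is open, this restriction of $f$ is a proper submersion over a connected open set with connected fibers, hence $f^{-1}(B_r\cap D)$ is connected (locally trivial fibration with connected base and connected fibers). The remaining points of $f^{-1}(D)$ lie over critical values in $D$; one must argue that the critical fibers are ``attached to'' the regular part, i.e. that $f^{-1}(D) \setminus f^{-1}(B_r\cap D)$ is contained in the closure of $f^{-1}(B_r \cap D)$. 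This follows because $\mathrm{Crit}(f)$ has empty interior (hypothesis (4)): any neighborhood of a critical point contains regular points mapping into $D$, so every point of a critical fiber is a limit of points in $f^{-1}(B_r \cap D)$. Therefore $f^{-1}(D) \subseteq \overline{f^{-1}(B_r\cap D)}$, and the closure of a connected set is connected, so $f^{-1}(D)$ is connected.

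Finally, I would combine the two steps: for each small $\epsilon$, $f^{-1}(\overline{D_\epsilon(c_0)})$ is connected (it equals $\overline{f^{-1}(B_r\cap D_\epsilon)}$ intersected appropriately, or one repeats the closure argument with the closed ball), it is compact by properness, and the family is decreasing in $\epsilon$ with intersection $f^{-1}(c_0)$; hence $f^{-1}(c_0)$ is connected. The main obstacle, I expect, is the careful handling of the ``attaching'' step — making precise that near a critical value the full preimage $f^{-1}(D)$ deformation-retracts onto, or at least lies in the closure of, the regular part, which genuinely uses all of hypotheses (2), (3), and (4) together; in particular hypothesis (2) is what rules out the annulus-type pathology where the regular values near a critical value split into two components whose preimages could be separate. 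A secondary technical point is ensuring ``sufficiently small $\epsilon$'' can be chosen uniformly enough that the decreasing-intersection argument applies, which is routine given that critical values of a proper map form a closed set and one can shrink to avoid other critical values.
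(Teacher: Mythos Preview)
Your approach is correct and genuinely different from the paper's. The paper argues by contradiction: assuming $f^{-1}(p)$ is disconnected, it uses properness (``fiber continuity'') to find a small disc $D$ around $p$ with $f^{-1}(D)$ split between two disjoint open sets $U$, $V$; it then defines a $\{0,1\}$-valued function on $B_r\cap D$ recording which of $U$, $V$ contains each connected regular fiber, shows this function is locally constant, and invokes hypothesis~(2) to conclude it is constant --- say all regular fibers lie in $U$. Then $f$ restricted to the nonempty open set $V\cap f^{-1}(D)$ takes only critical values, so by Sard's theorem $\DD f$ has rank $<n$ everywhere there, contradicting~(4). Your direct argument via a nested intersection of compact connected sets is arguably more transparent about how each hypothesis is used, and yields the stronger intermediate statement that $f^{-1}(D)$ itself is connected; the paper's route, on the other hand, avoids Ehresmann's theorem and the closure bookkeeping entirely.

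One slip to repair in your attaching step: you write that any neighborhood of a critical point contains \emph{regular points} mapping into $D$, and conclude that such a point lies in the closure of $f^{-1}(B_r\cap D)$. But a regular point (a point where $\DD f$ has full rank) need not map to a regular \emph{value}, and it is the latter that you need in order to land in $f^{-1}(B_r\cap D)$. The fix is immediate: near any regular point $f$ is an open map, so the image of any neighborhood contains an open set, which by Sard contains regular values; hence every regular point is a limit of points in $f^{-1}(B_r)$. Combined with hypothesis~(4) this gives that $f^{-1}(B_r)$ is dense in $X$, which is what your closure argument actually requires.
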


\begin{proof}
  We use the following ``fiber continuity'' fact~: that if $\Omega$ is
  a neighborhood of a fiber $f^{-1}(c)$ of a continuous proper map
  $f$, then the fibers $f^{-1}(q)$ with $q$ close to $c$ also lie
  inside $\Omega$.  Indeed, if this statement were not true, then
  there would exist a sequence $q_n\rightarrow c$ and a sequence of
  points $x_n \in f^{-1}(q_n)$, $x_{n} \notin \Omega$, such that there
  is a subsequence $x_{n_k} \rightarrow x \notin \Omega$. However, by
  continuity $x\in f^{-1}(c)$ which is a contradiction.

  Assume a fiber $\mathcal{F}=f^{-1}(p)$ of $f$ is not connected. Then
  there are disjoint open sets $U$ and $V$ in $X$ such that
  $\mathcal{F}$ lies in $U \cup V$ but is not contained in either $U$
  or $V$.

  By fiber continuity, there exists a small open disk $D$ about $p$
  such that $f^{-1}(D)\subset U\cup V$.

  Since the regular fibers are connected, we can define a map $\psi:
  D\cap B_r\to\{0;1\}$ which for $c\in D\cap B_r$ is equal to $1$ if
  $f^{-1}(c)\subset U$, and is equal to $0$ if $f^{-1}(c)\subset
  V$. The fiber continuity says that the sets $\psi^{-1}(0)$ and
  $\psi^{-1}(1)$ are open, thus proving that $\psi$ is
  continuous. By~(2), the image of $\psi$ must be connected, and
  therefore $\psi$ is constant. We can hence assume without loss of
  generality that all regular fibers above $D$ are contained in $U$.

  Now consider the restriction $\tilde{f}$ of $f$ on the open set
  $V\cap f^{-1}(D)$. Because of the above argument, it cannot take any
  value in $B_r\cap D$. Thus this map takes values in the set of
  critical values of $f$, which has measure zero by Sard's
  theorem. This requires that on $V\cap f^{-1}(D)$, the rank of $\DD
  f$ be strictly less that $n$, which contradicts~(4), and hence
  proves the lemma~: $f^{-1}(p)$ has to be connected.
\end{proof}

Now we are ready to prove one of our main results.

\begin{theorem} \label{theo:fibers} Suppose that $(M, \omega)$ is a
  connected symplectic four-manifold. Let $F=(J,\,H) \colon M \to
  \mathbb{R}^2$ be an almost\--toric integrable system such that $F$
  is a proper map. Suppose that $J$ has connected fibers, or that $H$
  has connected fibers.  Then the fibers of $F$ are connected.
\end{theorem}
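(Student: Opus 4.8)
The plan is to derive the statement from Lemma~\ref{tricky} applied to $f:=F$. First, after possibly composing $F$ with the coordinate swap $(x,y)\mapsto(y,x)$ of $\RM^2$ --- which preserves the hypotheses of being an almost-toric integrable system and of being proper --- I may assume that \emph{$J$ has connected fibers}. It then remains to verify the four hypotheses of Lemma~\ref{tricky} for $F$. Hypothesis~(1) is the assumed properness of $F$. Hypothesis~(4) is immediate from Eliasson's normal form (Theorem~\ref{singularities_theorem}): since $F$ has no hyperbolic singularities, every critical point is elliptic-elliptic, focus-focus or transversally elliptic, so the rank zero points are isolated and the rank one points form smooth $2$-dimensional submanifolds; hence $\mathrm{Crit}(F)$ lies in a countable union of submanifolds of codimension $\geq 2$, and in particular has empty interior.

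The substance is hypotheses~(3) and~(2), and this is where the connectedness of the fibers of $J$ enters. For~(3) I argue as follows. Let $(x_0,y_0)$ be a regular value of $F$ with nonempty fiber, and choose a small closed disc $D\ni(x_0,y_0)$ consisting of regular values of $F$. Then $F\colon F^{-1}(D)\to D$ is a proper submersion whose image is open (being a submersion) and closed (by properness) in the connected set $D$, and nonempty, hence equal to $D$; by Ehresmann's theorem $F\colon F^{-1}(D)\to D$ is a locally trivial fibration, and, $D$ being contractible, a trivial one, so every fiber of $F$ over $D$ is diffeomorphic to $F^{-1}(x_0,y_0)$. Since the regular values of $J\colon M\to\RM$ are dense in $\RM$ by Sard's theorem, I may pick $(x_1,y_1)\in D$ with $x_1$ a regular value of $J$. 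Then $J^{-1}(x_1)$ is a connected $3$-manifold (connectedness being the hypothesis on $J$), and $H$ restricted to it is proper because $F$ is. Using Theorem~\ref{singularities_theorem} one checks that $H|_{J^{-1}(x_1)}$ is Morse-Bott, its critical submanifolds arising only from the transversally elliptic points $p$ of $F$ lying in $J^{-1}(x_1)$: near such a $p$, in Eliasson coordinates, $J^{-1}(x_1)$ is cut out by $q_2=\psi(q_1)$ for a smooth function $\psi$, so $H|_{J^{-1}(x_1)}$ equals $H(p)+\phi\bigl((x_1^2+\xi_1^2)/2\bigr)$ with $\phi(0)=0$ and $\phi'(0)=\pm\det(\mathrm{D}g(0))/\partial_{q_2}g_1(0)\neq 0$ (here $g$ is the diffeomorphism of Theorem~\ref{singularities_theorem}). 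Thus the critical set of $H|_{J^{-1}(x_1)}$ is a disjoint union of $1$-submanifolds with definite transverse Hessian, so every index and co-index equals $0$ or $2$, never $1$. By Proposition~\ref{prop:levelset} the level sets of $H|_{J^{-1}(x_1)}$ are connected; that is, $F^{-1}(x_1,y_1)$ is connected, and hence so is the diffeomorphic fiber $F^{-1}(x_0,y_0)$.

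For hypothesis~(2) --- that $B_r\cap D$ be connected for all small neighborhoods $D$ of any critical value $c$ of $F$, where $B:=F(M)$ --- I plan to use two consequences of the connectedness of the fibers of $J$. First, each vertical slice $B\cap(\{x\}\times\RM)=\{x\}\times H(J^{-1}(x))$ is connected, being a continuous image of the connected set $J^{-1}(x)$. Second, by the computation above every critical submanifold of $H$ on a regular level set of $J$ is an extremum, so no transversally elliptic critical value lies in the interior $\mathring B$ of $B$; combined with the standard facts (as in the proof of Theorem~\ref{theo:interior}) that focus-focus values lie in $\mathring B$ and that boundary values are images of elliptic-elliptic or transversally elliptic points, this yields statement~(iv) of Theorem~\ref{theo:interior}, namely $B_r=\mathring B\setminus\{c_i\mid i\in I\}$ where $\{c_i\}$ is the set of focus-focus values (finite near $c$, by properness). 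Feeding these into the local classification of the image near a critical point provided by Theorem~\ref{singularities_theorem} (locally a half-plane, a proper cone, or a full neighborhood of $c$; see Figure~\ref{singularities_in_image}) should show that, for $D$ small, $\mathring B\cap D$ is connected and $B_r\cap D$ equals $\mathring B\cap D$ with finitely many points removed, hence is connected. Then Lemma~\ref{tricky} delivers the theorem.

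I expect hypothesis~(2) to be the main obstacle. The configurations that must be excluded are those where the germ of $B$ at $c$ is a union of several proper cones meeting only at the vertex $c$, or contains a segment of transversally elliptic critical values crossing through $\mathring B$; each would disconnect $B_r\cap D$. The delicate point is checking that the two facts above really rule these out: a cone of the image contained in a half-plane $\{x\leq x_c\}$ or $\{x\geq x_c\}$ would force its elliptic-elliptic vertex to be a local extremum of $J$, hence an isolated point of its (connected) $J$-fiber --- a contradiction; and a segment of transversally elliptic critical values in $\mathring B$ would, on a nearby regular slice $\{x=x_1\}$, produce an interior extremum of $H|_{J^{-1}(x_1)}$, which is likewise impossible. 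Making this bookkeeping airtight for every possible arrangement of finitely many cones and half-planes at $c$ is where the real care is needed.
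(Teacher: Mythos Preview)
Your approach is the paper's: reduce everything to Lemma~\ref{tricky}, with the real work in hypotheses~(3) and~(2). Your argument for~(3) matches the paper's Step~1 (Morse--Bott analysis of $H$ on regular $J$-slices, plus local triviality to move off singular $x$-values), and your plan for~(2) matches the paper's Steps~2 and~3.

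There is, however, a genuine gap in your treatment of~(2). Your argument that no transversally elliptic arc $\ell\subset\Sigma_F$ can lie in $\mathring B$ is: pick a nearby regular slice $\{x=x_1\}$ meeting $\ell$, and observe that the intersection gives an \emph{interior} local extremum of $H|_{J^{-1}(x_1)}$, which the Morse--Bott structure (indices $0$ and $2$ only, hence local extrema are global) forbids. This is correct when $\ell$ is not vertical, since then $J(F^{-1}(\ell))$ contains an interval and Sard produces such an $x_1$. But if $\ell$ is \emph{vertical}, contained in $\{x=x_0\}$, then no slice $\{x=x_1\}$ with $x_1\neq x_0$ meets $\ell$, and your extremum argument simply does not engage. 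The same failure hits your cone argument when an edge of the elliptic-elliptic cone is vertical: then the vertex need not be an isolated point of its $J$-fiber.

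The paper singles this out as Case~2B and uses a different mechanism. Take $A,B$ in the fiber over a point of $\ell$, with $A$ transversally elliptic, and join them by a path $\gamma$ in the (path-connected) set $J^{-1}(x_0)$. In the normal form $F=g(x_1^2+\xi_1^2,\xi_2)$ near $A$, verticality of $\ell$ forces $\partial_2 g_1(0,\xi_2)=0$, hence $\partial_1 g_1\neq 0$; the implicit function theorem then traps $\gamma$ inside the critical set $\{x_1=\xi_1=0\}$, and the sign of $\partial_1 g_1$ --- which records on which side of $\ell$ the local image of $F$ sits --- is locally constant along $\gamma$, hence globally constant. Thus the image of $F$ near the entire fiber lies on one side of $\ell$, contradicting $\ell\subset\mathring B$. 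You need this (or an equivalent) argument to close the vertical case; without it, hypothesis~(2) is not established.
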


\begin{proof}
  Without loss of generality, we may assume that $J$ has connected
  fibers.

  \paragraph{Step 1.} We shall prove first that for every
  \textit{regular value} $(x,\,y)$ of $F$, the fiber $F^{-1}(x,\,y)$
  is connected. To do this, we divide the proof into two cases.
\\
\\
  \emph{Case 1A.} Assume $x$ is a regular value of $J$.  Then the
  fiber $J^{-1}(x)$ is a smooth manifold. Let us show first that the
  non-degeneracy of the critical points of $F $ and the definition of
  almost-toric systems implies that the function $H_x:=H|_{J^{-1}(x)}
  \colon J^{-1}(x) \to \mathbb{R} $ is Morse-Bott. Let $B_r$ be the
  set of regular values of $F$.

  Let $m_0$ be a critical point of $H_x$. Then there exists
  $\lambda\in \mathbb{R}$ such that $\DD H(m_0)=\lambda \DD
  J(m_0)$. Thus $m_0$ is a critical point of $F$; it must be of rank
  $1$ since $\DD J$ never vanishes on $J^{-1}(x)$. Since $F$ is an
  almost-toric system, the only possible rank $1$ singularities are
  transversally elliptic singularities, i.e., singularities with 
  one elliptic component and one non singular component in Theorem
  \ref{singularities_theorem}; see Figure
  \ref{singularities_in_image}. Thus, by Theorem
  \ref{singularities_theorem}, there exist local canonical coordinates
  $(x_1,\,x_2,\,\xi_1,\,\xi_2)$ such that $F=g(x_1^2+\xi_1^2,\,\xi_2)$
  for some local diffeomorphism $g$ of $\mathbb{R}^2$ about the origin
  and fixing the origin; thus the derivative
  \[
  {\rm D}g(0,0)=:
  \begin{pmatrix}
    a & b\\c & d
  \end{pmatrix} \in \operatorname{GL}(2, \mathbb{R}).
  \]
  Note $\DD J(m_0)\neq 0$ implies that $d\neq 0$. Therefore, by the
  implicit function theorem, the submanifold $J^{-1}(x)$ is locally
  parametrized by the variables $(x_1,x_2,\xi_1)$ and, within it, the
  critical set of $H_x$ is given by the equation $x_1=\xi_1=0$; this
  is a submanifold of dimension 1. The Taylor expansion of $H_x$ is
  easily computed to be
  \begin{equation}
    H_x = a(x_1^2+\xi_1^2) -\frac{bc}{d}(x_1^2+\xi_1^2) 
    + \frac{b}{d}x + \mathcal{O}(x_1,\xi_1,x_2)^3.
    \label{equ:Hx}  
  \end{equation}
  Thus, the coefficient of $(x_1^2+\xi_1^2)$ is $(a-\frac{bc}{d})$
  which is non-zero and hence the Hessian of $H_x$ is transversally
  non-degenerate. This proves that $H_x$ is Morse-Bott, as claimed.

  Second, we prove, in this case, that the fibers of $F$ are
  connected. At $m_0$, the transversal Hessian of $H_x$ has either no
  or two negative eigenvalues, depending on the sign of
  $(a-\frac{bc}{d})$. This implies that each critical manifold has
  index $0$ or index $2$. If this coefficient is negative, the sum of
  the two corresponding eigenspaces is the full $2$-dimensional
  $(x_1,\xi_1)$-space.
  
  Note that $H_x \colon J^{-1}(x) \to \mathbb{R}$ is a proper map:
  indeed, if $K \subset \mathbb{R}$ is compact, then
  $H_x^{-1}(K)=F^{-1}(\{x\} \times K)$ is compact because $F$ is
  proper. Thus $H_x \colon J^{-1}(x) \to \mathbb{R}$ is a smooth
  Morse-Bott function on the connected manifold $J^{-1}(x)$ and $H_x$
  and $-H_x$ have only critical points of index $0$ or $2$.  We are in
  the hypothesis of Proposition~\ref{prop:levelset} and so we can
  conclude that the fibers of $H_x \colon J^{-1}(x) \to \mathbb{R}$
  are connected.

  Now, since $(H_x)^{-1}(y)=F^{-1}(x,\,y)$, it follows that
  $F^{-1}(x,\,y)$ is connected for all $(x,\,y) \in F(M) \subset
  \mathbb{R}^2$ whenever $x$ is a regular value of $J$.
 \\
 \\
  \emph{Case 1B.} Assume that $x $ is not a regular value of
  $J$. Note that there exists a point $(a,\,b)$ in every connected
  component $C_r$ of $B_r$ such that $b$ is a regular value for $J$;
  otherwise $\DD J$ would vanish on $F^{-1}(C_r)$, which violates the
  definition of $B_r$.  The restriction $F|_{F^{-1}(C_r)} \colon
  F^{-1}(C_r) \to C_r$ is a locally trivial fibration since, by
  assumption, $F$ is proper and thus the bifurcation set is equal to
  the critical set. Thus all fibers of $F|_{F^{-1}(C_r)}$ are
  diffeomorphic. It follows that $F^{-1}(x,\,y)$ is connected for all
  $(x,\,y) \in C_r$.

  This shows that all inverse images of regular values of $F$ are
  connected.

  \paragraph{Step 2.} We need to show that $F ^{-1}(x,y)$ is
  connected if $(x,y)$ is not a regular value of $F$.  We claim that
  there is \emph{no} critical value of $F$ in the interior of the
  image $F(M)$, except for the critical values that are images of
  focus-focus critical points of $F$. Indeed, if there was such a
  critical value $(x_0,\,y_0)$, then there must exist a small segment
  line $\ell$ of critical values (by the local normal form described
  in Theorem \ref{singularities_theorem} and Figure
  \ref{singularities_in_image}). Now we distinguish two cases.
\\
\\
  \emph{Case 2A.} First assume that $\ell$ is not a vertical
  segment (i.e., contained in a line of the form
  $x=\textup{constant}$) and let $\hat{\ell}:=F^{-1}(\ell)$.  Then
  $J(\hat{\ell})$ contains a small interval around $x$, so by Sard's
  theorem, it must contain a regular value $x_0$ for the map $J$. Then
  $J^{-1}(x_0)$ is a smooth manifold which is connected, by
  hypothesis. By the argument earlier in the proof (see Step 1, Case
  A), both $H$ and $-H$ restricted to $J^{-1}(x_0)$ are proper
  Morse-Bott functions with indices $0$ and $2$. So, if there is a
  local maximum or local minimum, it must be unique. However, the
  existence of this line of rank 1 elliptic singularities implies that
  there is a local maximum/minimum of $H_{x_0}$ (see
  formula~\eqref{equ:Hx}).  Since the corresponding critical value
  lies in the interior of the image of $H_{x_0}$, it cannot be a
  global extremum; we arrived at a contradiction. Thus the small line
  segment $\ell$ must be vertical.
  \\
  \\
  \emph{Case 2B.} Second, suppose that $\ell$ is a vertical
  segment (i.e., contained in a line of the form
  $x=\textup{constant}$) and let $\hat{\ell}:=F^{-1}(\ell)$.  We can
  assume, without loss of generality, that the connected component of
  $\ell$ in the bifurcation set is vertical in the interior of $F(M)$;
  indeed, if not, apply Case 2A a above.
  
  From Figure~\ref{singularities_in_image} we see that $\hat{\ell}$
  must contain at least one critical point $A$ of transversally
  elliptic type together with another point $B$ either regular or of
  transversally elliptic type. By the normal form of non-degenerate
  singularities, $J^{-1}(x_0)$ must be locally path connected. Since
  it is connected by assumption, it must be path connected. So we have
  a path $\gamma: [0,1]\mapsto J^{-1}(x_0)$ such that $\gamma(0)=A$
  and $\gamma(1)=B$. Near $A$ we have canonical coordinates
  $(x_1,x_2,\xi_1,\xi_2)\in\RM^4$ and a local diffeomorphism $g$
  defined in a neighborhood of the origin of $\mathbb{R}^2$ and
  preserving it, such that
$$
F=g(x_1^2+\xi_1^2,\xi_2)
$$ 
and $A=(0,0,0,0)$. Write $g=(g_1,g_2)$. The critical set is defined by
the equations $x_1=\xi_1=0$ and, by assumption, is mapped by $F$ to a
vertical line. Hence $g_1(0,\xi_2)$ is constant, so
$$
\partial_2g_1(0,\xi_2)=0.
$$ 
Since $g$ is a local diffeomorphism, $\DD
g_1\neq 0$, so we must have $\partial_1 g_1 \neq 0$. Thus, by the
implicit function theorem, any path starting at $A$ and satisfying
$g_1(x_1^2\xi_1^2,\xi_2) =\text{constant}$ must also satisfy
$x_1^2+\xi_1^2=0$.  Therefore, $\gamma$ has to stay in the critical
set $x_1=\xi_1=0$.

Assume first that $\gamma([0,1])$ does not touch the boundary of
$F(M)$. Then this argument shows that the set of $t\in[0,1]$ such that
$\gamma(t)$ belongs to the critical set of $F$ is open. It is also
closed by continuity of $\DD F$.  Hence it is equal to the whole
interval $[0,1]$. Thus $B$ must be in the critical set; this rules out
the possibility for $B$ to be regular. Thus $B$ must be a rank-1
elliptic singularity.  Notice that the sign of $\partial_1 g_1$
indicates on which side of $\ell$ (left or right) lie the values of
$F$ near $A$.
  
Thus, even if $g_1$ itself is not globally defined along the path
$\gamma$, this sign is locally constant and thus globally defined
along $\gamma$. Therefore, all points near $\hat{\ell}$ are mapped by
$F$ to the same side of $\ell$, which says that $\ell$ belongs to the
boundary of $F(M)$; this is a contradiction.
  
Finally, assume that $\gamma([0,1])$ touches the boundary of
$F(M)$. From the normal form theorem, this can only happen when the
fiber over the contact point contains an elliptic-elliptic point
$C$. Thus there are local canonical coordinates
$(x_1,x_2,\xi_1,\xi_2)\in\mathbb{R}^4$ and a local diffeomorphism $g$
defined in a neighborhood of the origin of $\mathbb{R}^2$ and
preserving it, such that
$$
F=g(x_1^2+\xi_1^2,x_2^2+\xi_2^2)
$$ 
near $C=(0,0,0,0)$. We note that the same argument as above applies:
simply replace the $\xi_2$ component by $x_2^2+\xi_2^2$. Thus we get
another contradiction.  Therefore there are no \emph{critical} values
$c$ in the interior of the image $F(M)$ other than focus-focus values
(i.e., images of focus-focus points).

\paragraph{Step 3.}
We claim here that for any critical value $c$ of $F$ and for any
sufficiently small disk $D$ centered at $c$, $B_r\cap D$ is connected.

First we remark that Step 2 implies that item \textup{(iv)} in
Theorem~\ref{theo:interior} holds, and hence item \textup{(ii)} must
hold~: the set of regular values of $F$ is connected.

If $c$ is a focus-focus value, it must be contained in the interior of
$F(M)$, therefore it follows from Step 2 that it is isolated~: there
exists a neighborhood of $c$ in which $c$ is the only critical value,
which proves the claim in this case.

We assume in the rest of the proof that $c=(x_c,y_c)$ is an elliptic
(of rank 0 or 1) critical value of $F$. Since we have just proved in
Step 2 that there are no critical values in the interior of $F(M)$
other than focus-focus values, we conclude that $c \in \partial
(F(M))$. Moreover, the fiber cannot contain a regular Liouville
torus. Then, again by Theorem \ref{singularities_theorem}, the only
possibilities for a neighborhood of $c$ in $F(M)$ are superpositions
of elliptic local normal forms of rank 0 or 1 (given by Theorem
\ref{singularities_theorem}) in such a way that $c \in \partial
(F(M))$.  If only one local model appears, then the claim is
immediate.

Let us show that a neighborhood $U$ of $c$ cannot contain several
different images of local models. Indeed, consider the possible
configurations for two different local images $C_1$ and $C_2$: either
both $C_1$ and $C_2$ are elliptic-elliptic images, or both are
transversally elliptic images, or $C_1$ is an elliptic-elliptic image
and $C_2$ is a transversally elliptic image. Step 2 implies that the
critical values of $F$ in $C_1$ and $C_2$ can only intersect at a
point, provided the neighborhood $U$ is taken to be small enough.  Let
us consider a vertical line $\ell$ through $C_1$ which corresponds to
a regular value of $J$. Any crossing of $\ell$ with a non-vertical
boundary of $F(M)$ must correspond to a local extremum of
$H|_{F^{-1}(\ell)}$, and by Step 2 this local extremum has to be a
global one. Since only one global maximum and one global minimum are
possible, the only allowed configurations for $C_1$ and $C_2$ are such
that the vertical line $\ell_c$ through $c$ separates the regular
values of $C_1$ from the regular values of $C_2$ (see
Figure~\ref{fig:C1C2}).
\begin{figure}[h]
  \centering
  \includegraphics[width=0.45\textwidth]{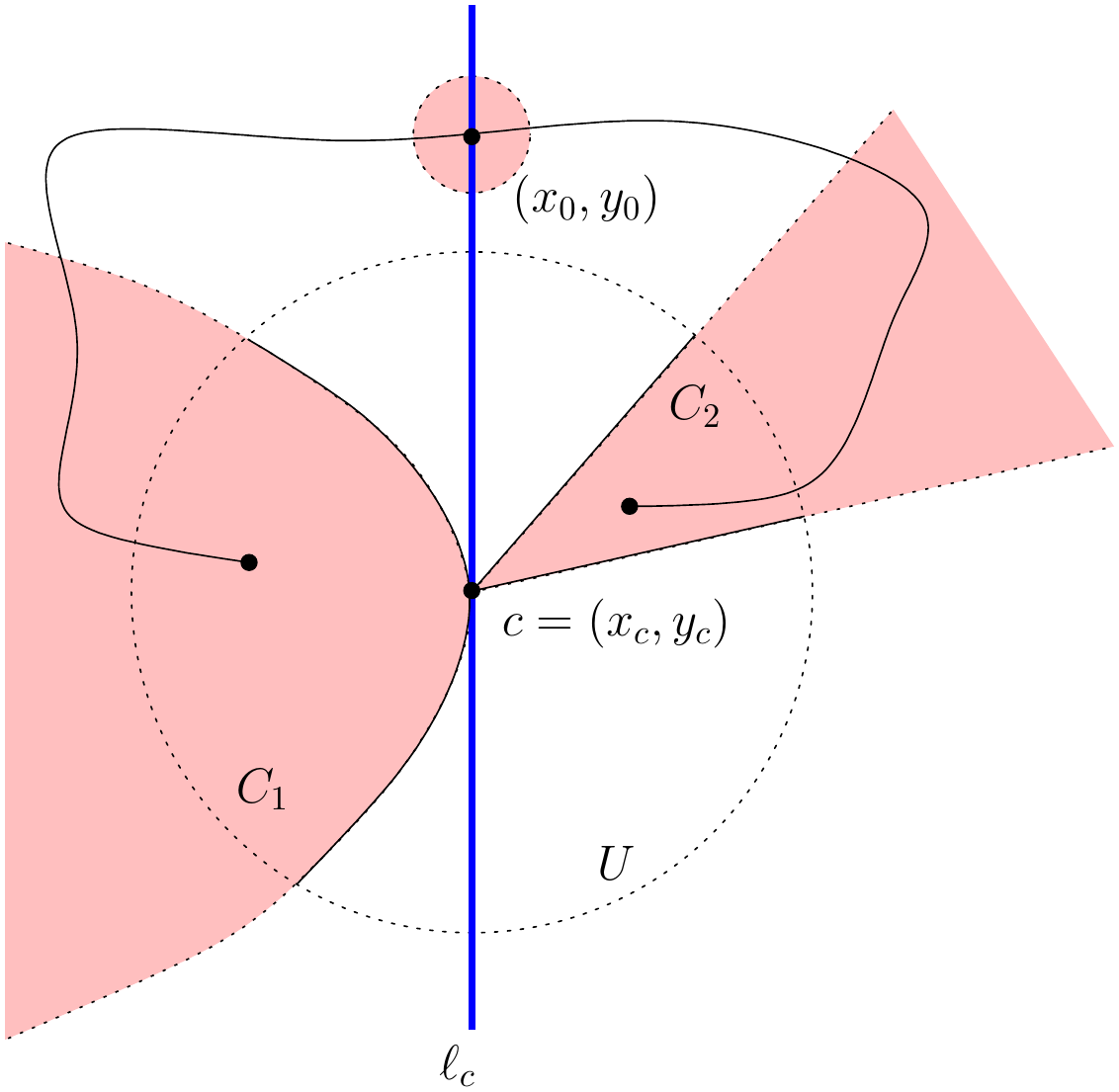}
  \caption{Point overlapping of images near singularities. Such a
    situation cannot occur if the fibers of $J$ are connected.}
  \label{fig:C1C2}
\end{figure}

Since $B_r$ is path connected, there exists a path in $B_r$ connecting
a point in $B_r\cap C_1$ to a point in $B_r\cap C_2$. By continuity
this path needs to cross $\ell_c$, and the intersection point
$(x_0=x_c,y_0)$ must lie outside $U$. Therefore there exists an open
ball $B_0\subset B_r$ centered at $(x_0,y_0)$.  Suppose for instance
that $y_0>y_c$. Then for $(x,y)\in U$, $y$ cannot be a maximal value
of $H|_{J^{-1}(x)}$, which means that for each of $C_1$ and $C_2$,
only local minima for $H|_{J^{-1}(x)}$ are allowed. This cannot be
achieved by any of the local models, thus finishing the proof of our
claim.

The statement of the theorem now follows from Lemma \ref{tricky} since
all fibers are codimension at least one (this follows directly from
the independence assumption in the definition of an integrable
system. But, in fact, we know from the topology of non-degenerate
integrable systems that fibers have codimension at least
two~\cite{bolsinov-fomenko-book}).
\end{proof}

\begin{remark}
  It is not true that an almost-toric integrable system with connected
  regular fibers has also connected singular fibers. See
  example~\ref{ex0} below.
\end{remark}

The following are examples of almost-toric systems in which the fibers
of $F$ are not connected. In the next section we will combine
Theorem~\ref{theo:fibers} with an upcoming result on contact theory
for singularities (which we will prove too) in order to obtain
Theorem~\ref{theo:main-connectivity} of Section~\ref{sec:intro}.

\begin{example} \label{ex0} This example appeared in \cite[Chapter 5,
  Figure 29]{san-mono}. It is an example of a toric system $F:=(J,\,H)
  \colon M\to \mathbb{R}^2$ on a compact manifold for which $J$ and
  $H$ have some disconnected fibers (the number of connected
  components of the fibers also changes). Because this example is
  constructed from the standard toric system $S^2 \times S^2$ by
  precomposing with a local diffeomorphism, the singularities are
  non\--degenerate. In this case the fundamental group $\pi_1(F(M))$
  has one generator, so $F(M)$ is not simply connected, and hence not
  contractible.  See Figure~\ref{anneau}. An extreme case of this
  example can be obtained by letting only two corners overlap
  (Figure~\ref{fig:anneau-extreme}). We get then an almost-toric
  system where all regular fibers are connected, but one singular
  fiber is not connected.
\end{example}

\begin{figure}[htbp]
  \centering
  \includegraphics[width=11.5cm]{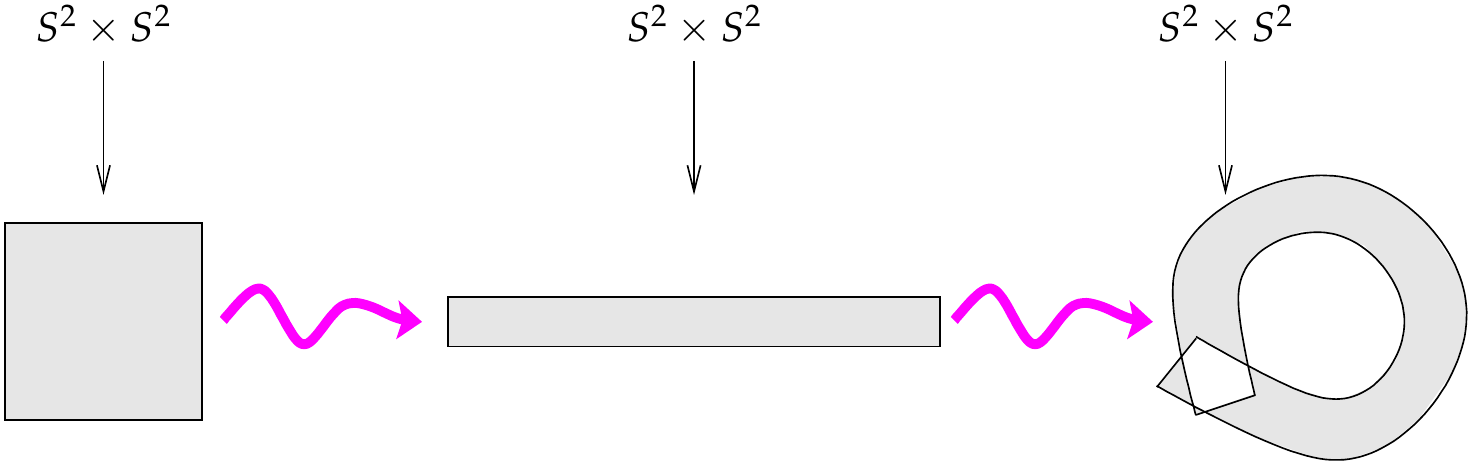}
  \caption{An almost-toric system with some disconnected fibers,
    constructed from the standard toric system on $S^2 \times S^2$ by
    precomposing with a local diffeomorphism.}
  \label{anneau}
\end{figure}

\begin{figure}[h]
  \centering
  \includegraphics[width=2.5cm]{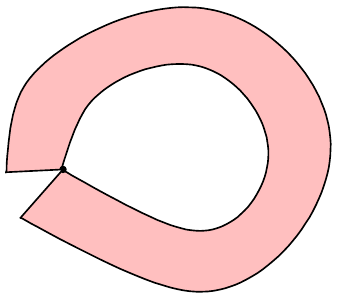}
  \caption{Image of integrable system constructed from $S^2 \times
    S^2$ by a one-point identification, and which has one disconnected
    fiber. All the other fibers are connected.}
  \label{fig:anneau-extreme}
\end{figure}

\begin{example} \label{ex2} The manifold is $M := S^2 \times S^1
  \times S^1$.  Choose coordinates $h \in [1,\,2]$, $a \in
  \mathbb{R}/2 \pi \mathbb{Z}$ on $S^2$; these are action-angle
  coordinates for the Hamiltonian system given by the rotation
  action. Choose coordinates $b, c \in \mathbb{R}/2\pi\mathbb{Z}$ on
  $S^1 \times S^1$. For each $n \in \mathbb{N}$, $n \neq0$, the 2-form
  $\DD h\wedge\DD a + n\DD b\wedge\DD c$ is symplectic. Let
$$
F(h, \,a,\,b,\,c) := ( J:=h \cos( n b) , \, H:=h \sin( n b) )
$$  
and note that $J$ is the momentum map of an $S^1$-action rotating the
sphere about the vertical axis and the first component of $S ^1\times
S ^1$.

Note that $F$ maps $M$ onto the annulus in Figure \ref{fig:annulus}.
\begin{figure}[h]
  \centering
  \includegraphics[width=0.5\textwidth]{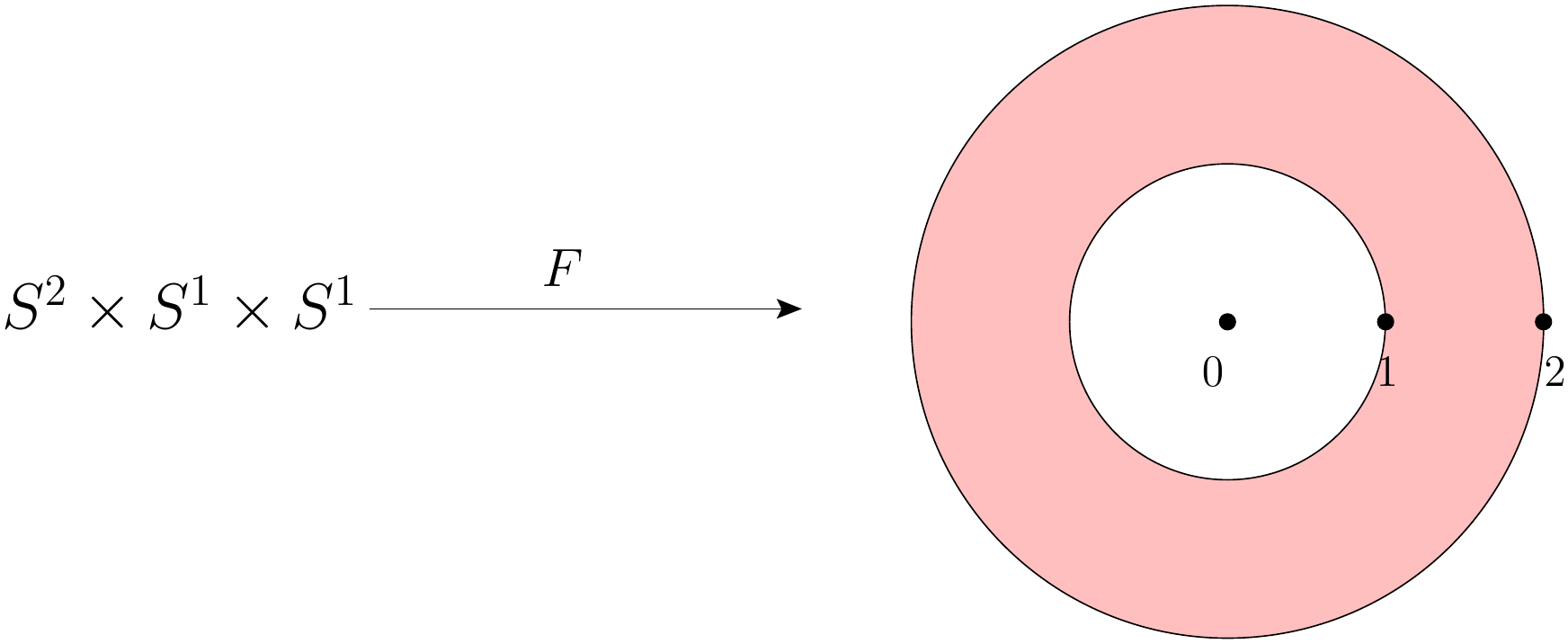}
  \caption{An almost toric system with disconnected fibers.}
  \label{fig:annulus}
\end{figure}
Topologically $F$ winds the first copy of $S^1$ exactly $n$ times
around the annulus and maps $S^2$ to radial intervals.  The fiber of
any regular value is thus $n$ copies of $(S^1)^2$, while the preimage
of any singular value is $S^1$.  One can easily check that all
singular fibers are transversely elliptic. The Hamiltonian vector
fields are:
$$
\mathcal{H}_J=\cos(n b)\frac{\partial}{\partial a} - h \sin(
nb)\frac{\partial}{\partial c} ,\,\,\,\,\,\, \mathcal{H}_H=\sin(n b)
\frac{\partial}{\partial a} + h \cos(nb) \frac{\partial}{\partial c}
$$
which are easily checked to commute (the coefficient functions are
invariants of the flow). Observe that neither integrates to a global
circle action. As for the components, they all look alike (the system
has rotational symmetry in the $b$ coordinate).  Consider for instance
the component $J(h, a, b, c) =h \cos(nb)$ which has critical values
$-2, -1, 1, 2$ and is Morse-Bott with critical sets equal to $n$
copies of $S^1$ and with Morse indices $0, 2, 1$, and $3$,
respectively. The sets $J < -2$ and $J > 2$ are empty.  For any $-2< k
< -1$ or $1 < k<2$, the fiber of $J^{-1}(k)$ is equal to $n$ copies of
$S^2\times S^1$ and for $-1< k < 1$ the fiber $J^{-1}(k)$ is equal to
$2n$ copies of $S^2 \times S^1$. We thank Thomas Baird for this
example.
\end{example}

\begin{remark}
  Much of our interest on this topic came from questions asked by
  physicists and chemists in the context of molecular spectroscopy
  \cite[Section 1]{PeVN2011}, \cite{Fi2009,Sa1995,Ch1999,As2010}.
  Many research teams have been working on this topic, to name a few:
  Mark Child's group in Oxford (UK), Jonathan Tennyson's at the
  University College London (UK), Frank De Lucia's at Ohio State
  University (USA), Boris Zhilinskii's at Dunkerque (France), and Marc
  Joyeux's at Grenoble (France).

  For applications to concrete physical models the theorem is far
  reaching.

 \begin{figure}[h]
   \centering
   \includegraphics[width=10.5cm]{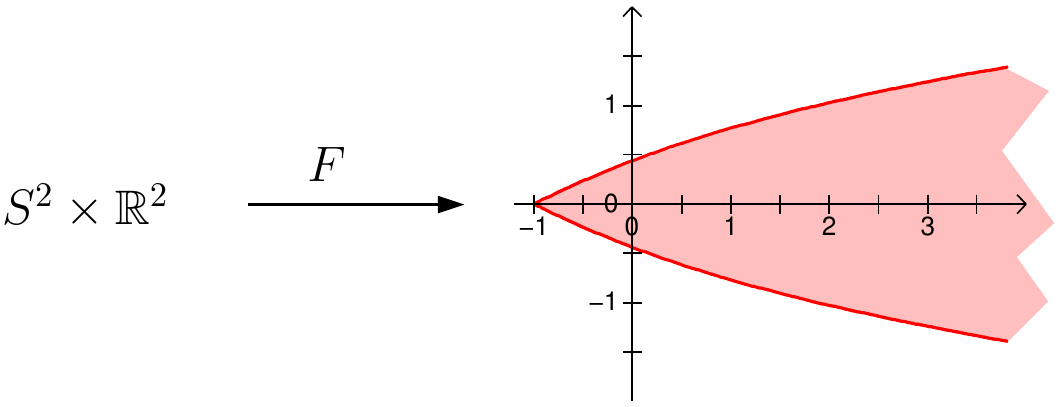}
   \label{fig:example}
   \caption{Coupled-spin oscillator}
 \end{figure}

 The reason is that is essentially impossible to prove connectedness
 of the fibers of concrete physical systems.  For the purpose of
 applications, a computer may approximate the bifurcation set of a
 system and find its image to a high degree of accuracy.  See for
 example the case of the coupled spin-oscillator in Figure
 \ref{fig:example}, which is one of the most fundamental examples in
 classical physics, and which recently has attracted much attention,
 see \cite{Ba2009, Ba2011}.
\end{remark}

\section{The image of an almost\--toric system} \label{sec3}

In this section we study the structure of the image of an almost-toric
system.

\begin{figure} [h]
  \centering
  \includegraphics[width=5.5cm]{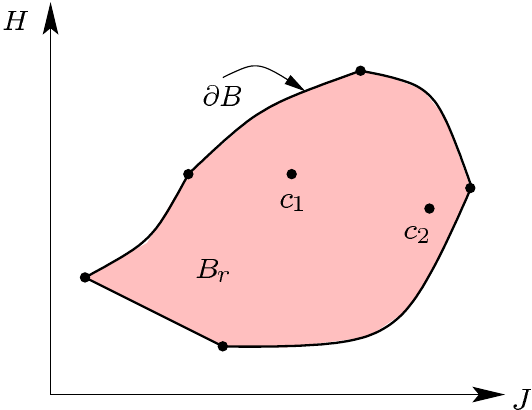}
  \caption{The image $B:=F(M)$ of an almost-toric momentum map
    $F=(J,\,H) \colon M \to \R^2$. The set of regular values of $F$ is
    denoted by $B_r$. The marked dots $c_1,\,c_2$ inside of $B$
    represent singular values, corresponding to the focus-focus
    fibers. The set $B_r$ is equal to $B$ minus $\partial B \cup
    \{c_1,\,c_2\}$.}
  \label{image}
\end{figure}

\subsection{Images bounded by lower/upper semicontinuous graphs}

We star with the following observation.

\begin{lemma} \label{lemma:critical-set} Let $M$ be a connected smooth
  manifold and let $f \colon M \to \mathbb{R}$ be a Morse-Bott
  function with connected fibers. Then the set $C_0$ of index zero
  critical points of $f$ is connected. Moreover, if $\lambda_0:=\inf
  f\geq -\infty$, the following hold:
  \begin{itemize}
  \item[{\rm (1)}] If $\lambda_0>-\infty$ then
    $C_0=f^{-1}(\lambda_0)$.
  \item[{\rm (2)}] If $\lambda_0=-\infty$ then $C_0=\varnothing$.
  \end{itemize}
\end{lemma}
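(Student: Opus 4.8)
The plan is to reduce everything to the earlier Morse-theoretic machinery, chiefly Lemma \ref{lemm:semibounded}, but without assuming properness of $f$. First I would note that the hypothesis ``$f$ is Morse-Bott with connected fibers'' already forces a strong rigidity on the index-$0$ critical set: if $m$ is a critical point of index $0$, the Morse-Bott lemma gives a neighborhood in which $f$ attains a local minimum along the critical manifold $C_i$ through $m$ and is strictly increasing transversally, so $C_i$ is contained in a single level set, say $f^{-1}(\lambda)$, and near $C_i$ the level sets $f^{-1}(\lambda')$ for $\lambda'$ slightly above $\lambda$ are the regular tube boundaries. The first claim, that $C_0$ is connected, is then the heart of the matter. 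Rather than invoking $C_0=W^s(C_0)$ as in Lemma~\ref{lemm:semibounded} (which needed properness for completeness of the gradient flow), I would argue via the fibers directly: suppose $m, m'$ are index-$0$ critical points lying in level sets $f^{-1}(\lambda)$ and $f^{-1}(\lambda')$. If $\lambda = \lambda'$ they lie in the same fiber, which is connected, and local analysis (a connected level set that contains an index-$0$ critical point near which it is a single point of a critical submanifold, versus being a regular hypersurface elsewhere) pins them to the same critical manifold provided the fiber meets the critical set in a relatively open-and-closed way; if $\lambda \ne \lambda'$, I would derive a contradiction by a local-minimum/local-minimum-separation argument as in Step~3 of the proof of Theorem~\ref{theo:fibers}: two distinct index-$0$ critical values both being local minima of $f$ on a connected manifold, combined with connectedness of nearby level sets, is impossible.

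Cleaner, and the route I would actually take: mimic the trick in Proposition~\ref{prop:levelset}. Pick a regular value $c$ of $f$ with $c$ strictly larger than any index-$0$ critical value we wish to compare (these values form a discrete set since critical manifolds of a Morse-Bott function are locally finite — here I should be slightly careful in the non-proper case and only claim local finiteness, which suffices). On the open set $\{f < c\}$, every critical point of $f$ retains its index and $(f-c)^2$ restricted there is Morse-Bott with the same low-index critical set. Then I would like to apply Lemma~\ref{lemm:semibounded}, but that lemma assumes properness. So instead I would argue: connectedness of $C_0$ is a statement about a single level set, and I would show $C_0 = f^{-1}(\lambda_0)$ when $\lambda_0 > -\infty$ first (part (1)), from which connectedness of $C_0$ follows because $f^{-1}(\lambda_0)$ is a fiber, hence connected by hypothesis. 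For part (1): if $\lambda_0 = \inf f > -\infty$, then since $f$ is Morse-Bott the infimum, if attained, is attained on a critical manifold of index $0$; I must show it \emph{is} attained and that \emph{every} index-$0$ critical point lies at this level. Attainment: any index-$0$ critical manifold is a level set of a local minimum, so its value is a candidate; if there were an index-$0$ critical value $\mu > \lambda_0$, pick a point $p$ with $f(p) < \mu$ (possible since $\lambda_0 < \mu$) and a point $q$ on the critical manifold at level $\mu$; connect them by a path, and consider the fiber $f^{-1}(\mu)$ — it contains $q$ (an isolated-in-its-level local picture) but the path from $p$ crosses level $\mu$ coming from below, producing another component of $f^{-1}(\mu)$, contradicting connectedness of fibers. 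Hence all index-$0$ critical values coincide, equal to $\lambda_0$, and $C_0 \subseteq f^{-1}(\lambda_0)$; conversely any point of $f^{-1}(\lambda_0)$ is a global minimum hence a critical point, and its index must be $0$ since $f$ cannot decrease, giving $f^{-1}(\lambda_0) \subseteq C_0$. Part (2): if $\lambda_0 = -\infty$, an index-$0$ critical point would be a strict local minimum at some finite value $\mu$; then $f^{-1}(\mu)$ contains that critical manifold as one piece and, since $\inf f = -\infty < \mu$ and also (generically, or after perturbation within the level) values above are achieved, $f^{-1}(\mu)$ acquires a second component from a path descending from a point with $f > \mu$ — again contradicting fiber-connectedness. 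So $C_0 = \varnothing$.

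The main obstacle I anticipate is the non-properness: I cannot directly quote Lemma~\ref{lemm:semibounded} or use completeness of the negative gradient flow, so the ``a path from below level $\mu$ must create a second component of $f^{-1}(\mu)$'' step has to be made rigorous purely from the Morse-Bott local normal form and connectedness of fibers, essentially an intermediate-value argument: along a continuous path $\gamma$ with $f(\gamma(0)) < \mu = f(q)$ and $\gamma$ reaching into $\{f > \mu\}$ or staying below, one locates the last parameter where $f \circ \gamma = \mu$ near a point not on the critical manifold through $q$ (using that the critical manifold is contained in one level and is locally a proper submanifold, so it does not fill a neighborhood). Handling the subtle case where the path could be trapped inside the critical locus is dispatched by the same rigidity used in Step~2 of Theorem~\ref{theo:fibers}: index-$0$ critical manifolds are isolated local minima, so a path cannot travel between two of them while staying critical. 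Once the level-set contradiction is in place, parts (1) and (2) and the connectedness of $C_0$ all follow, and I would present the argument in the order: (a) discreteness/local structure of index-$0$ critical values; (b) the single-value lemma forcing all index-$0$ critical points into one level; (c) deduce (1), (2), and connectedness.
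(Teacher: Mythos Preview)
Your proposal contains the right ingredients but leaves a genuine gap in the central step, and the paper's proof --- which uses the same ideas --- closes it in two lines that you should foreground.

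The gap is in your paragraph-two argument for part~(1): you take an index-$0$ critical point $q$ at level $\mu>\lambda_0$, a point $p$ with $f(p)<\mu$, connect them by a path, and assert that the path ``produces another component of $f^{-1}(\mu)$''. But why? The path may simply hit $f^{-1}(\mu)$ only at $q$ itself, or at points which are connected to $q$ inside $f^{-1}(\mu)$. What you actually need is that the point $\delta(t_0)$ (where $t_0=\inf\{t: f(\delta(t))<\mu\}$) is itself an index-$0$ critical point, so that the Morse--Bott normal form forces $f\geq\mu$ nearby and yields an immediate contradiction with $f(\delta(t_\alpha))<\mu$ for $t_\alpha\searrow t_0$. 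That requires knowing that \emph{every} point of the fiber $\Lambda:=f^{-1}(\mu)$ lies in $C_0$ --- precisely the ``open-and-closed'' observation you mention in passing in your first paragraph but never deploy.

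The paper's proof makes this the first and main step: $\Lambda$ is connected by hypothesis and locally path connected (Morse--Bott lemma at critical points, submersion theorem at regular points), hence path connected; any path in $\Lambda$ starting at an index-$0$ point $m$ must stay in $C_0$ because the Morse--Bott chart $f=\lambda+\sum y_i^2$ shows that the level set near $m$ \emph{is} the critical manifold; therefore $\Lambda\subseteq C_0$. Once you have this, the path-to-a-lower-point argument is a one-line contradiction (exactly the $t_0$ computation above), and all three conclusions --- connectedness of $C_0$, part~(1), part~(2) --- drop out with no further case analysis, no appeal to Lemma~\ref{lemm:semibounded}, and no properness needed. Reorganize your write-up around this single lemma (``the fiber through an index-$0$ point is entirely index-$0$'') and the rest of your sketch becomes unnecessary.
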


\begin{proof}
  The fiber over a point is locally path connected. If the point is
  critical this follows from the Morse-Bott Lemma and if the point is
  regular, this follows from the submersion theorem.

  Let $m$ be a critical point of index $0$ of $f$, i.e., $m \in C_0$,
  and let $\lambda:=f(m)$, $\Lambda:=f^{-1}(\lambda)$. Let $\gamma
  \colon [0,\,1] \to \Lambda$. Since $\Lambda$ is connected and
  locally path connected, it is path connected.  Let $\gamma \colon
  [0,\,1] \to \Lambda$ be a continuous path starting at $m$. By the
  Morse-Bott Lemma, $\operatorname{im}(\gamma) \subset
  C_0$. Therefore, since $\Lambda$ is path connected,
  $\Lambda\subseteq C_0$.  Each connected component of $C_0$ is
  contained in some fiber of $f$ and hence $\Lambda$ is the connected
  component of $C_0$ that contains $m$. We shall prove that $C_0$ has
  one one connected component.

  Assume that there is a point $m'$ such that $f(m')<\lambda$ and let
  $\delta\colon [0,\,1] \to M$ be a continuous path from $m$ to
  $m'$. Let
$$
t_0:=\inf\{t>0 \, | \, f(\delta(t))<\lambda\}.
$$
Then $f(\delta(t_0))=\lambda$ and, by definition, for every $\alpha>0$
there exists $t_{\alpha} \in [t_0,\, t_0+\alpha]$ such that
$f(\delta(t_{\alpha}))<\lambda$.

Let $m_0:=\delta(t_0)$. Let $U$ be a neighborhood of $\delta(t_0)$ in
which we have the Morse-Bott coordinates given by the Morse-Bott Lemma
centered at $\delta(t_0)$. For $\alpha$ small enough,
$\delta([t_0,\,t_1]) \subset U$ and therefore for $t \in [t_0,\,t_1]$
we have that
$$
f(\delta(t)) =\sum_{i=1}^k y_i^2(\delta(t)) +\lambda \geq \lambda,
$$
which is a contradiction.
\end{proof}

Note that the following result is strictly Morse-theoretic; it does
not involve integrable systems. A version of this result was proven in
\cite[Theorem 3.4]{VN2007} in the case of integrable systems
$F=(J,\,H)$ for which $J \colon M \to \mathbb{R}$ is both a proper map
(hence $F\colon M \to \mathbb{R}^2$ is proper) and a momentum map for
a Hamiltonian $S^1$-action. The version we prove here applies to
smooth maps, which are not necessarily integrable systems.

\begin{theorem} \label{imagemomentum:theorem} Let $M$ be a connected
  smooth four-manifold. Let $F=(J,\,H) \colon M \to \mathbb{R}^2$ be a
  smooth map.  Equip $\overline{\RM}:=\mathbb{R}\cup \{\pm\infty\}$
  with the standard topology. Suppose that the component $J$ is a
  non-constant Morse-Bott function with connected fibers.  Let
  $H^{+},\, H^{-} \colon J(M) \to \overline{\RM}$ be the functions
  defined by $H^{+}(x) := \sup_{J^{-1}(x)} H$ and $H^{-}(x) :=
  \inf_{J^{-1}(x)} H$. The functions $H^{+}$, $-H^{-}$ are lower
  semicontinuous. Moreover, if $F(M)$ is closed in $\mathbb{R}^2$ then
  $H^+$, $-H^{-}$ are upper semicontinuous (and hence continuous), and
  $F(M)$ may be described as
  \begin{eqnarray}
    F(M) = \op{epi}(H^{-}) \cap \op{hyp}(H^+). 
    \label{pp}
  \end{eqnarray}
  In particular, $F(M)$ is contractible.
\end{theorem}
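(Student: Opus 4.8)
The plan is to organize the proof around the sublevel-set structure of the Morse-Bott function $J$ and the behavior of $H$ on the connected fibers $J^{-1}(x)$. First I would establish lower semicontinuity of $H^+$ (the argument for $-H^-$ being symmetric, applied to $-H$). Fix $x_0 \in J(M)$ and a real number $c < H^+(x_0)$; I want to show $H^+(x) > c$ for $x$ near $x_0$. Pick $m_0 \in J^{-1}(x_0)$ with $H(m_0) > c$. The key point is that $J$, being Morse-Bott, is locally trivial away from its critical values, and near a critical value the Morse-Bott Lemma gives an explicit normal form; in either case $m_0$ lies in the closure of $\bigcup_{x \text{ near } x_0} J^{-1}(x)$ in a controlled way, so one can find points $m_x \in J^{-1}(x)$ with $m_x \to m_0$, whence $H(m_x) > c$ for $x$ close to $x_0$ by continuity of $H$. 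This gives $H^+(x) \geq H(m_x) > c$, proving lower semicontinuity. (One subtlety: if $x_0$ is an endpoint-type critical value where the fiber structure degenerates, one uses that $J(M)$ is an interval — a consequence of $M$ connected and $J$ Morse-Bott with connected fibers, via Lemma~\ref{lemma:critical-set} — and works one-sidedly.)

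Next, assuming $F(M)$ is closed, I would prove upper semicontinuity of $H^+$. Suppose not: there is $x_0$, a sequence $x_k \to x_0$, and $H^+(x_k) \to L > H^+(x_0)$ (allowing $L = +\infty$). Choose $m_k \in J^{-1}(x_k)$ with $H(m_k)$ close to $H^+(x_k)$, so $(x_k, H(m_k)) \in F(M)$ and $(x_k, H(m_k)) \to (x_0, L)$ (or escapes to infinity). If $L$ is finite, closedness of $F(M)$ forces $(x_0, L) \in F(M)$, so there is $m \in J^{-1}(x_0)$ with $H(m) = L > H^+(x_0)$, contradicting the definition of $H^+(x_0)$. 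The case $L = +\infty$ is handled similarly: one must rule out that $H^+$ jumps to $+\infty$, which would mean points $(x_k, y_k) \in F(M)$ with $y_k \to \infty$ and $x_k \to x_0$; here I would again use that for $x_0$ a regular value the fiber $J^{-1}(x_0)$ is a closed (hence, since $H$ restricted to it must actually be bounded by a limiting argument against closedness of $F(M)$ along the vertical line $\{x_0\} \times \mathbb{R}$) — the cleanest route is: the vertical slice $F(M) \cap (\{x_0\} \times \mathbb{R})$ equals $\{x_0\} \times [H^-(x_0), H^+(x_0)]$ once we know both semicontinuities on a neighborhood, and closedness plus the already-established lower semicontinuity pins down that this slice cannot be a proper subset of the limit of neighboring slices. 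I expect this finiteness/escape-to-infinity bookkeeping to be the main obstacle, since it is where the hypothesis "$F(M)$ closed" must be used most carefully.

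With $H^+$ and $-H^-$ continuous, the description $F(M) = \op{epi}(H^-) \cap \op{hyp}(H^+)$ follows: the inclusion $F(M) \subseteq \op{epi}(H^-) \cap \op{hyp}(H^+)$ is immediate from the definitions of $H^\pm$. For the reverse inclusion, fix $x \in J(M)$ and $y$ with $H^-(x) \leq y \leq H^+(x)$; since $J^{-1}(x)$ is connected and $H$ is continuous on it, $H(J^{-1}(x))$ is a connected subset of $\mathbb{R}$ — an interval — containing values arbitrarily close to $H^-(x)$ and to $H^+(x)$, hence containing all of $(H^-(x), H^+(x))$, and the endpoints lie in $F(M)$ because $F(M)$ is closed (and $H^\pm$ are now known to be attained, or are limits of attained values forced into $F(M)$ by closedness). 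Thus $(x,y) \in F(M)$.

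Finally, contractibility of $F(M)$: the base $J(M)$ is an interval $I \subseteq \mathbb{R}$, hence contractible, and $F(M)$ is exactly the region in $I \times \mathbb{R}$ between the graphs of the two continuous functions $H^- \leq H^+$. Such a region deformation retracts onto the graph of $H^-$ (for instance via the straight-line homotopy $(x,y,t) \mapsto (x,\, (1-t)y + t H^-(x))$, which stays inside $\op{epi}(H^-)\cap\op{hyp}(H^+)$ since this set is fiberwise an interval with lower endpoint $H^-(x)$), and the graph of $H^-$ is homeomorphic to $I$, hence contractible. Composing, $F(M)$ is contractible.
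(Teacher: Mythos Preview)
Your outline for lower semicontinuity is essentially the paper's argument: the paper makes it explicit by flowing along $\nabla J$ from a point $m$ with $H(m)$ close to $H^+(x_0)$, and invokes Lemma~\ref{lemma:critical-set} to handle the case $\DD J(m)=0$ (an index-$0$ critical point would force $x_0$ to be an endpoint of $J(M)$), but the content is the same.

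There are two genuine gaps.

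\emph{Upper semicontinuity, the case $L=+\infty$.} You correctly identify this as the sticking point but do not resolve it; your suggested ``cleanest route'' is circular, since it assumes the vertical-slice description you are trying to prove. The paper's device avoids the difficulty altogether and in fact treats finite and infinite $L$ uniformly: rather than tracking points $(x_k,H(m_k))$ whose second coordinate may escape, fix once and for all a real number $y_0$ with $H^+(x_0)<y_0<L$. Using the already-proved upper semicontinuity of $H^-$ (lower semicontinuity of $-H^-$), one has $H^-(x_k)<y_0<H^+(x_k)$ for large $k$. Since $J^{-1}(x_k)$ is connected, $H(J^{-1}(x_k))$ is an interval containing $y_0$, so $(x_k,y_0)\in F(M)$. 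Now $(x_k,y_0)\to(x_0,y_0)$ lies in the closed set $F(M)$, contradicting $y_0>H^+(x_0)$. Note that connectedness of the fibers of $J$ is used here, not only in the proof of~\eqref{pp}.

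\emph{Contractibility when $H^\pm$ take infinite values.} Your straight-line homotopy $(x,y,t)\mapsto (x,(1-t)y+tH^-(x))$ is undefined whenever $H^-(x)=-\infty$, which the theorem allows (e.g.\ $F(M)=\RM^2$). The paper first pushes $F(M)$ into a compact strip by composing with a homeomorphism $\overline{\RM}\cong[-1,1]$ in the second coordinate; after that the region lies between two \emph{finite} continuous graphs and your retraction goes through.
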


\begin{proof} First we consider the case where $F(M)$ is not
  necessarily closed (Part 1). In Part 2 we prove the stronger result
  when $F(M)$ is closed.

  \paragraph{Part 1.} We do not assume that $F(M)$ is closed and prove
  that $H^+$ is lower semicontinuous; the proof that $H^{-}$ is lower
  semicontinuous is analogous. Since, by assumption, $J$ is
  non-constant, the interior set $\operatorname{int}J(M)$ of $J(M)$ is
  non-empty. The set $\operatorname{int}J(M)$ is an open interval
  $(a,\,b)$ since $M$ is connected and $J$ is continuous.  Lower
  semicontinuity of $H^+$ is proved first in the interior of $J(M)$
  (case A) and then at the possible boundary (case B).
\\
\\
  \emph{Case A.} Let $x_0 \in \operatorname{int}J(M)$, and let
  $y_0:=H^{+}(x_0)$.  Let $\epsilon>0$. By the definition of supremum,
  there exists $\epsilon'>0$ with $\epsilon'<\epsilon$ such that if
  $y_1:=y_0-\epsilon'$ then $F^{-1}(x_0,\,y_1)\neq \varnothing$ (see
  Figure~\ref{fig:lowersemicontinuity}). Here we have assumed that
  $y_0<+\infty$; if $y_0=+\infty$, we just need to replace $y_1$ by an
  arbitrary large constant. Let $m \in F^{-1}(x_0,\,y_1)$. Then
  $J(m)=x_0$. Endow $M$ with a Riemannian metric and, with respect to
  this metric, consider the gradient vector field $\nabla J$ of
  $J$. Let $t_0>0$ such that the flow $\varphi^t(m)$ of $\nabla J$
  starting at $\varphi^0(m)=m$ exists for all $t \in
  (-t_0,\,t_0)$. Now we distinguish two cases.

  \begin{figure}[h]
    \centering
    \includegraphics[width=0.6\textwidth]{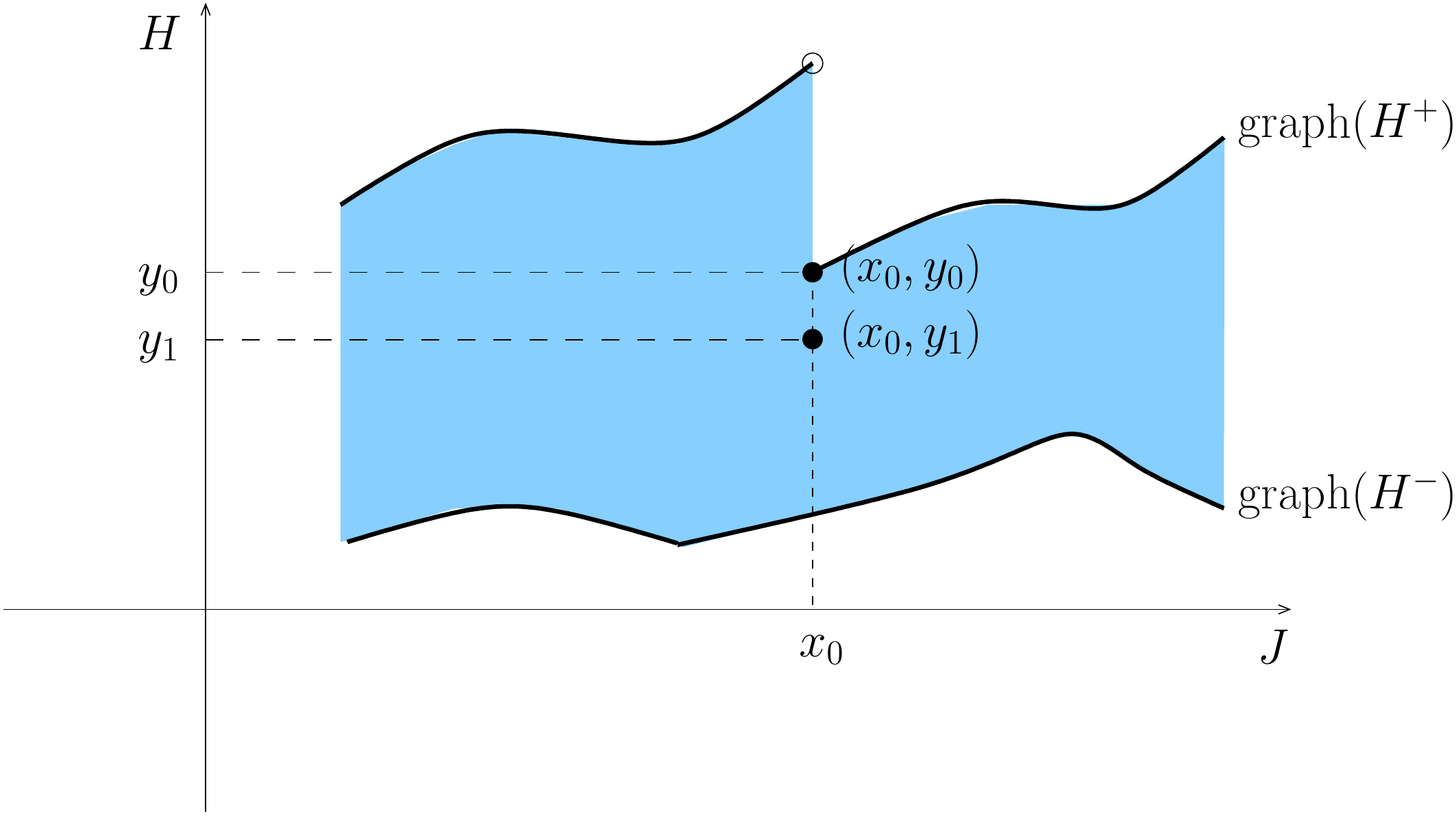}
    \caption{Lower semicontinuity of $H^+$.}
    \label{fig:lowersemicontinuity}
  \end{figure}

  \begin{itemize}
  \item[A.1.]  Assume $\DD J(m)\neq 0$.  Since $\nabla J(m)\neq 0$,
    the set
$$
\Lambda_{t_0}:=\{ J(\varphi(t)) \, | \, t \in (-t_0,\, t_0)\}
$$
is a neighborhood of $x_0$.

Let $B$ the ball of radius $\epsilon$ centered at $(x_0,\, y_1)$. Let
$U:=F^{-1}(B)$, which contains $m$. Let $t'_0\leq t_0$ be small enough
such that $\varphi^t(m)\in U$ for all $|t|<t'_0$. The set
$\Lambda_{t'_0}$ is a neighborhood of $x_0$, so there is $\alpha>0$
such that $(x_0-\alpha, \, x_0+\alpha) \subseteq \Lambda_{t'_0}$.
 
Let $x\in \Lambda_{t'_0}$; there exists $|t|<t'_0$ such that
$J(\varphi^t(m))=x$ by definition of $\Lambda_{t'_0}$. Since
$F(\varphi^t(m)) \in B$ we conclude that $y:=H(\varphi^t(m)) \in
(y_1-\epsilon,\, y_1+\epsilon)$, so $H^+(x)\geq y \geq y_1-\epsilon$
for all $x$ with $|x-x_0|<\alpha$.

Thus we get $H^+(x)\geq y_0-2\epsilon$ for all $x$ with
$|x-x_0|<\alpha$, which proves the lower semicontinuity.

\item[A.2.]  Assume $\DD J(m)=0$. By Lemma~\ref{lemma:critical-set} we
  conclude that $m$ is not of index $0$, for otherwise $J(m)=x_0$
  would be a global minimum in $J(M)$ which contradicts the fact that,
  by assumption, $x_0 \in \operatorname{int}J(M)$.

  Thus the Hessian of $J$ has at least one negative eigenvalue and
  therefore there exists $t_0>0$ such that $\Lambda_{t_0}$ is an open
  neighborhood of $x_0$ and we may then proceed as in Case A.1.
\end{itemize}
Hence $H^+$ is lower semicontinuous.
\\
\\
\emph{Case B.} We prove here lower semicontinuity at a point
$x_0$ in the topological boundary of $J(M)$.  We may assume that
$J(M)=[a,\,b)$ and that $x_0=a$. By Lemma~\ref{lemma:critical-set},
$J^{-1}(a)=C_0$, where $C_0$ denotes the set of critical points of $J$
of index $0$. If $m \in J^{-1}(a)$ and $U$ is a small neighborhood of
$m$, it follows from the Morse-Bott lemma that $J(U)$ is a
neighborhood of $a$ in $J(M)$. Hence we may proceed as in Case A to
conclude that $H^+$ is lower semicontinuous.

\paragraph{Part 2.} Assuming that $F(M)$ is closed we shall prove now
that $H^+$ is upper semicontinuous.

Suppose that $H^+$ is not upper semicontinuous at some point $x_0 \in
J(M)$ (so we must have $H^+(x_0)<\infty$).  Then there exists
$\epsilon_0>0$ and a sequence $\{x_n\} \subset J(M)$ converging to
$x_0$ such that $H^+(x_n)\geq H^+(x_0)+\epsilon_0$.  First assume that
$H^-(x_0)$ is finite. Since $H^-$ is upper semicontinuous, for $x$
close to $x_0$ we have
\[
H^-(x) \leq H^-(x_0)+\frac{\epsilon_0}{2}.
\]
Thus we have
\begin{equation}
  H^-(x_n) \leq H^-(x_0) + \frac{\epsilon_0}{2}\leq
  H^+(x_0)+\frac{\epsilon_0}{2} < H^+(x_0)+\epsilon_0\leq 
  H^+(x_n).
  \label{equ:inequality}
\end{equation}
Let $y_0\in (H^+(x_0)+\frac{\epsilon_0}{2},
H^+(x_0)+\epsilon_0)$. Because of~\eqref{equ:inequality}, and since
$H(J^{-1}(x_n))$ is an interval whose closure is $[H^-(x_n),H^+(x_n)]$
(remember that $J$ has connected fibers and $H$ is continuous), we
must have $y_0\in H(J^{-1}(x_n))$.  Therefore $(x_n,y_0)\in F(M)$.
Since $F(M)$ is closed, the limit of this sequence belongs to $F(M)$;
thus $(x_0,\,y_0) \in F(M)$.  Hence $y_0 \leq H^+(x_0)$, a
contradiction. If $H^-(x_0)=-\infty$, we just replace in the proof
$H^-(x_0) + \frac{\epsilon_0}{2}$ by some constant $A$ such that
$A\leq H^+(x_0)+\frac{\epsilon_0}2$.

Hence $H^{+}$ is upper semicontinuous.  Therefore $H^{+}$ is
continuous. The same argument applies to $H^-$.

In order to prove~\eqref{pp}, notice that for any $x\in J(M)$, we have
the equality
\begin{eqnarray} \label{identity:for} \{x \} \times H(J^{-1}(x))=F(M)
  \cap \{ (x,\,y) \, | \, y \in \mathbb{R}\}.
\end{eqnarray}
Therefore, if $F(M)$ is closed, $\{x \} \times H(J^{-1}(x))$ must be
closed and hence equal to $\{x \} \times [H^-(x),H^+(x)]$.  The
equality \eqref{pp} follows by taking the union of the identity
\eqref{identity:for} over all $x \in J(M)$.

Finally, we show that $F(M)$ is contractible. Since
$\mathbb{R}\cup\{\pm\infty\}$ is homeomorphic to a compact interval,
$F(M)$ is homeomorphic to a closed subset of the strip
$\mathbb{R}\times [-1,1]$ by means of a homeomorphism $g$ that fixes
the first coordinate $x$. Thus, composing~\eqref{pp} by this
homeomorphism we get
\[
g(F(M))=\text{epi}(h^-)\cap \text{hyp}(h^+)
\]
for some continuous functions $h^+$ and $h^-$.  Then the map
\[
g(F(M))\times [0,1] \ni((x,y),t) \mapsto (x,t(y-h^-(x))) \in
\mathbb{R}^2
\]
is a homotopy equivalence with the horizontal axis. Since $g$ is a
homeomorphism, we conclude that $F(M)$ is contractible.
\end{proof}

\subsection{Constructing Morse-Bott functions}

We can give a stronger formulation of Theorem
\ref{imagemomentum:theorem}.  First, recall that if $\Sigma$ is a
smoothly immersed $1$-dimensional manifold in $\mathbb{R}^2$, we say
that $\Sigma$ \emph{has no horizontal tangencies} if there exists a
smooth curve $\gamma \colon I \subset \mathbb{R} \to \mathbb{R}^2$
such that $\gamma(I)=\Sigma$ and $\gamma'_2(t) \neq 0$ for every $t\in
I$. Note that $\Sigma$ has no horizontal tangencies if and only if for
every $c \in \mathbb{R}$ the $1$-manifold $\Sigma$ is transverse to
the horizontal line $y=c$.

We start with the following result, which is of independent interest
and its applicability goes far beyond its use in this paper.

We begin with a description of the structure of the set $\Sigma_F : =
F(\operatorname{Crit}(F))$ of critical values of an integrable systems
$F:M\to\mathbb{R}^2$; as usual $\operatorname{Crit}(F)$ denotes the
set of critical points of $F$.

Let $c_0\in\Sigma_F$ and $B\subset\RM^2$ a small closed ball centered
at $c_0$. For each point $m\in F^{-1}(B)$ we choose a chart about $m$
in which $F$ has normal form (see Theorem
\ref{singularities_theorem}). There are seven types of normal forms,
as depicted in Figure~\ref{singularities_in_image}. Since $F^{-1}(B)$
is compact, we can select a finite number of such chart domains that
still cover $F^{-1}(B)$. For each such chart domain $\Omega$, the set
of critical values of $F|_{\Omega}$ is diffeomorphic to the set of
critical values of one of the models described in
Figure~\ref{singularities_in_image}, which is either empty, an
isolated point, an open curve, or up to four open curves starting from
a common point. Since
$$
\Sigma_F\cap B=F(\textup{Crit}(F)\cap F^{-1}(B)),
$$ 
it follows that $\Sigma\cap B$ is a finite union of such models. This
discussion leads to the following proposition.

\begin{prop}
  \label{prop:strata}
  Let $(M,\, \omega)$ be a connected symplectic four-manifold.  Let
  $F:M\to\RM^2$ be a non-degenerate integrable system.  Suppose that
  $F$ is a proper map.  Then $\Sigma_F:=F(\textup{Crit}(F))$ is the
  union of a finite number of stratified manifolds with 0 and 1
  dimensional strata. More precisely, $\Sigma_F$ is a union of
  isolated points and of smooth images of immersions of closed
  intervals (since $F$ is proper, a 1-dimensional stratum must either
  go to infinity or end at a rank-zero critical value of $F$).
\end{prop}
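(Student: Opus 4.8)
The plan is to promote the local discussion immediately preceding the statement into a global structure result, letting properness of $F$ handle all the gluing. First I would fix $c_0\in\Sigma_F$ and choose a small closed ball $B\subset\RM^2$ centered at $c_0$; since $F$ is proper, $F^{-1}(B)$ is compact, so by Theorem~\ref{singularities_theorem} it is covered by finitely many chart domains $\Omega_1,\dots,\Omega_N$ on which $F$ is in Eliasson normal form. On each $\Omega_j$ the set of critical values of $F|_{\Omega_j}$ is diffeomorphic to one of the seven models of Figure~\ref{singularities_in_image}: the empty set, a single point, one smooth arc, or up to four smooth arcs issuing from a common point. Since $\Sigma_F\cap B=F(\operatorname{Crit}(F)\cap F^{-1}(B))=\bigcup_{j} F(\operatorname{Crit}(F|_{\Omega_j}))$, the set $\Sigma_F\cap B$ is a finite union of such models; hence $\Sigma_F$ is a locally finite union of $0$- and $1$-dimensional stratified manifolds, and it is closed in $\RM^2$ (a limit of critical values $F(m_n)$ has a critical preimage, by properness of $F$ together with closedness of $\operatorname{Crit}(F)$ in $M$). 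When $\operatorname{Crit}(F)$ is compact, in particular when $M$ is compact, this locally finite union is a genuinely finite one.

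Next I would separate the $0$- and $1$-dimensional parts. From the normal forms, each rank-zero critical point is isolated in $M$ (in every model the rank-zero locus reduces to a single point), so by properness the set $\mathcal{F}$ of rank-zero critical values is discrete in $\RM^2$ and at most countable. The remaining critical points all have rank one, and near such a point the normal form (transversally elliptic or transversally hyperbolic) exhibits the critical set as a smooth embedded surface on which $F$ has constant rank $1$; restricting $F$ to a connected component of this surface therefore factors through a connected $1$-manifold immersed into $\RM^2$. Taking closures inside $\Sigma_F$ and invoking the finiteness of $\Sigma_F\cap B$ for every small ball $B$, each such immersed $1$-manifold is the smooth image of an immersion of a closed interval (possibly closing up into a circle), and its endpoints must lie in $\mathcal{F}$: the only models in which an arc of critical values terminates are those possessing the distinguished common point, which is always a rank-zero critical value. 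This gives $\Sigma_F=\mathcal{F}\sqcup(\text{smooth images of immersed closed intervals})$ with the stated endpoint behaviour; an arc that remains in a compact region must, by the local finiteness, connect to one of the finitely many model pieces there, so it either runs off to infinity or ends at a point of $\mathcal{F}$, and nothing else can happen.

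The crux is the passage from the clean local pictures to a coherent global stratification, and the one place that needs genuine care is ruling out pathological limiting behaviour of a $1$-dimensional stratum --- for instance an arc of transversally elliptic values accumulating onto a compact set with no well-defined endpoint. This is precisely where properness enters twice: once to make $F^{-1}(B)$ compact, hence to obtain a \emph{finite} normal-form cover over every bounded region (which forbids such accumulation), and once to force $\Sigma_F$ to be closed, so that any limit point of a stratum is again a critical value already accounted for by the local models. Beyond this, the argument is a bookkeeping of the seven Eliasson models of Figure~\ref{singularities_in_image}; the self-intersections that appear when several critical points lie over a common value are harmless, merely adding further $0$-dimensional strata.
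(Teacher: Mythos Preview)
Your proposal is correct and follows essentially the same approach as the paper: the paper's argument is precisely the paragraph preceding the proposition, which uses properness of $F$ to obtain a finite Eliasson normal-form cover of $F^{-1}(B)$ and then reads off $\Sigma_F\cap B$ as a finite union of the seven local models. Your second and third paragraphs add welcome detail on the global gluing and endpoint behaviour that the paper leaves largely implicit, but the underlying strategy is identical.
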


\begin{definition}
  \label{defi:tangent}
  Let $c\in\Sigma_F$. A vector $v\in\RM^2$ is called \emph{tangent to
    $\Sigma_F$} if there is a smooth immersion
  $\iota:\RM\supset[0,1]\to\Sigma_F$ with $\iota(0)=c$ and
  $\iota'(0)=v$.
\end{definition}
Here $\iota$ is smooth on $[0,1]$, when $[0,1]$ is viewed as a subset
of $\RM$. Notice that a point $c$ can have several linearly
independent tangent vectors.
\begin{definition}
  \label{defi:contact}
  Let $\gamma$ be a smooth curve in $\RM^2$.
  \begin{itemize}
  \item If $\gamma$ intersects $\Sigma_F$ at a point $c$, we say that
    the intersection is \emph{transversal} if no tangent vector of
    $\Sigma_F$ at $c$ is tangent to $\gamma$. Otherwise we say that
    $c$ is a \emph{tangency point}.

  \item Assume that $\gamma$ is tangent to a 1-stratum $\sigma$ of
    $\Sigma_F$ at a point $c\in\sigma$. Near $c$, we may assume that
    $\gamma$ is given by some equation $\phy(x,y)=0$, where
    $\DD\phy(c)\neq 0$.

    We say that $\gamma$ has a \emph{non-degenerate contact} with
    $\sigma$ at $c$ if, whenever $\delta:(-1,1)\to\sigma$ is a smooth
    local parametrization of $\sigma$ near $c$ with $\delta(0)=c$,
    then the map $t\mapsto (\phy\circ\delta)(t)$ has a non-degenerate
    critical point at $t=0$.
  \item Every tangency point that is not a non-degenerate contact is
    called \emph{degenerate} (this includes the case where $\gamma$ is
    tangent to $\Sigma_F$ at a point $c$ which is the end point of a
    1-dimensional stratum $\sigma$).
  \end{itemize}
\end{definition}

With this terminology, we can now see how a Morse function on $\RM^2$
can give rise to a Morse-Bott function on $M$.

\begin{theorem}[Construction of Morse-Bott functions]
  \label{prop:bott}
  Let $(M,\, \omega)$ be a connected symplectic four-manifold.  Let
  $F:=(J,\,H) \colon M \to \mathbb{R}^2$ be an integrable system with
  non-degenerate singularities (of any type, so this statement applies
  to hyperbolic singularities too) such that $F$ is proper.  Let
  $\Sigma_F\subset\RM^2$ be the set of critical values of $F$, i.e.,
  $\Sigma_F:=F(\textup{Crit}(F))$.

  Let $U\subset\RM^2$ be open.  Suppose that $f:U \rightarrow
  \mathbb{R}$ is a Morse function whose critical set is disjoint from
  $\Sigma_F$ and the regular level sets of $f$ intersect $\Sigma_F $
  transversally or with non-degenerate contact.

  Then $f\circ F$ is a Morse-Bott function on $F^{-1}(U)$.
\end{theorem}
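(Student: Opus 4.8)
The plan is to show that $f \circ F$ is Morse-Bott by a local analysis near each point of $M$, using Eliasson's normal form (Theorem~\ref{singularities_theorem}). Away from $\operatorname{Crit}(F)$ this is easy: if $m \notin \operatorname{Crit}(F)$ then $\D_m F$ is surjective, so $\D_m(f\circ F) = \D_{F(m)}f \circ \D_m F$ has rank equal to $\operatorname{rank}\,\D_{F(m)}f$; moreover the critical set of $f\circ F$ near $m$ is $F^{-1}$ of the critical set of $f$, which is a submanifold, and transversality of the Hessian in the normal direction follows because $F$ is a submersion there. (Note that $f$ having no critical values on $\Sigma_F$ forces any critical point of $f\circ F$ lying over $\operatorname{Crit}(f)$ to be a regular point of $F$, so the two cases below are genuinely exhaustive once combined with this one.) So the real content is at a critical point $m \in \operatorname{Crit}(F)$, where we put $F$ in Eliasson normal form $(x_1,x_2,\xi_1,\xi_2)$ and must understand $f \circ F$ as a function of these coordinates.

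The key step is a case analysis by the rank and type of $m$. By Theorem~\ref{singularities_theorem}, after composing with the diffeomorphism $g$ there, $F - F(m) = g \circ (q_1,q_2)$ where $(q_1,q_2)$ is one of: elliptic-elliptic $((x_1^2+\xi_1^2)/2,\ (x_2^2+\xi_2^2)/2)$; focus-focus $(x_1\xi_2 - x_2\xi_1,\ x_1\xi_1 + x_2\xi_2)$; transversally elliptic $((x_1^2+\xi_1^2)/2,\ \xi_2)$ (possibly with hyperbolic components $x_j\xi_j$ allowed since the statement covers those too). Writing $\tilde f := f \circ g(\cdot + F(m)) - f(F(m))$, which is again smooth with a regular point at the origin (its differential is $\D_{F(m)}f \neq 0$ since $m \notin f^{-1}(\operatorname{Crit}(f))$), we must show $\tilde f \circ (q_1,q_2)$ is Morse-Bott near $0$. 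The point is that $\operatorname{Crit}(\tilde f \circ (q_1,q_2)) = (q_1,q_2)^{-1}(\gamma)$ where $\gamma = \{ \tilde f = 0\} \cap \Sigma_F^{\mathrm{loc}}$ is (a piece of) the regular level curve of $f$ through... no: more precisely, $\D(\tilde f \circ (q_1,q_2)) = (\nabla \tilde f)\cdot \D(q_1,q_2)$, which vanishes exactly where $\D(q_1,q_2)$ fails to be injective into the line $\ker \D\tilde f$, i.e. on the preimage of the relevant intersection. One then computes the Hessian transverse to this critical submanifold and shows non-degeneracy; for instance in the transversally elliptic case the critical set is $\{x_1 = \xi_1 = 0\}$, and the transverse Hessian is a nonzero multiple of the quadratic form $x_1^2 + \xi_1^2$ precisely because the level curve $\{\tilde f = 0\}$ is not tangent to the $q_2$-axis direction — which is exactly the transversality/non-degenerate-contact hypothesis. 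The focus-focus case is the most delicate: the critical set is the single point $0$, and one must check the Hessian of $\tilde f \circ (q_1,q_2)$ at $0$ is a nondegenerate quadratic form on $\R^4$; since $q_1, q_2$ are themselves quadratic and vanish to second order, $\operatorname{Hess}_0(\tilde f \circ (q_1,q_2)) = \partial_1\tilde f(0)\,\operatorname{Hess}_0 q_1 + \partial_2\tilde f(0)\,\operatorname{Hess}_0 q_2 = a\,q_1 + b\,q_2$ with $(a,b) = \nabla\tilde f(0) \neq 0$, and one checks $a q_1 + b q_2$ is a nondegenerate quadratic form on $\R^4$ for every $(a,b)\neq(0,0)$ by direct computation of its matrix determinant. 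The elliptic-elliptic case is similar but easier (the critical set is again the origin).

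The main obstacle I expect is the transversally elliptic (and hyperbolic) case, where the critical manifold is genuinely $1$-dimensional and where the \emph{non-degenerate contact} hypothesis — as opposed to outright transversality — is needed: one must verify that when the level curve $\{f = \text{const}\}$ is tangent to the $1$-stratum of $\Sigma_F$ but with non-degenerate contact, the composed function still has a non-degenerate transverse Hessian along the whole critical manifold, not just at one point. Concretely, parametrize the critical manifold by $x_2$ (with $x_1 = \xi_1 = 0$) and expand $\tilde f \circ (q_1, q_2) = \tilde f\big((x_1^2+\xi_1^2)/2,\ \xi_2 + O(3)\big)$; the transverse Hessian in the $(x_1,\xi_1)$-plane at a point with parameter $x_2 = s$ is $\partial_1 \tilde f\big(0, \varphi(s)\big) \cdot (\text{Hessian of } (x_1^2+\xi_1^2)/2)$ where $\varphi$ traces the $\xi_2$-image of the critical locus, and the transversality-or-non-degenerate-contact hypothesis is exactly what guarantees $\partial_1 \tilde f$ does not vanish along this curve. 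Once this is carefully bookkept — including the observation that $F^{-1}(U)$ is covered by finitely many Eliasson charts plus regular charts, and that the local critical submanifolds patch to form the disjoint union of connected submanifolds required in the Morse-Bott definition — the theorem follows. The bookkeeping that the global critical set is a disjoint union of \emph{connected} submanifolds (rather than just locally submanifolds) uses properness of $F$ together with the local models, as in the structure statement of Proposition~\ref{prop:strata}.
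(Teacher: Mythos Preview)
Your overall strategy---local case analysis via Eliasson's normal form, separating the regular locus from $\operatorname{Crit}(F)$ and treating rank-0 and rank-1 singularities---is exactly the paper's approach, and your handling of the focus-focus case is correct. But your treatment of the rank-1 (transversally elliptic or hyperbolic) case has a genuine gap.

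You conflate $\operatorname{Crit}(F)$ with $\operatorname{Crit}(f\circ F)$. In the normal form $F = g\bigl((x_1^2+\xi_1^2)/2,\, \xi_2\bigr)$, the critical set of $F$ is indeed the 2-dimensional locus $\{x_1=\xi_1=0\}$, but the critical set of $L=f\circ F$ is strictly smaller. Writing $h = f\circ g$, one has $\DD L = (\partial_1 h)(x_1\,dx_1+\xi_1\,d\xi_1) + (\partial_2 h)\,d\xi_2$, so on $\{x_1=\xi_1=0\}$ the differential $\DD L$ vanishes if and only if $\partial_2 h(0,\xi_2)=0$. Transversality of the level set of $f$ to the stratum (the $q_2$-axis) is exactly $\partial_2 h\neq 0$, so in the transversal sub-case $L$ has \emph{no} critical points there at all. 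In the non-degenerate contact sub-case, $\partial_2 h(0,0)=0$ and $\partial_2^2 h(0,0)\neq 0$; the implicit function theorem then forces $\xi_2=0$, and the critical manifold of $L$ is the 1-dimensional set $\{x_1=\xi_1=\xi_2=0\}$, parametrized by $x_2$ alone.

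Consequently the transverse Hessian you must compute is $3\times 3$ in the variables $(x_1,\xi_1,\xi_2)$, not $2\times 2$. A short computation (this is the paper's equation for the 2-jet of $L$) gives the block form $\operatorname{diag}\bigl(\partial_1 h(0,0),\,\partial_1 h(0,0),\,\partial_2^2 h(0,0)\bigr)$. The entry $\partial_1 h(0,0)\neq 0$ follows already from $\nabla h\neq 0$ and $\partial_2 h=0$ (tangency), so this is \emph{not} where the transversality/contact hypothesis enters; rather, the non-degenerate contact condition $\partial_2^2 h(0,0)\neq 0$ is precisely what makes the $\xi_2\xi_2$-entry nonzero. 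Your proposal misses this entirely: you compute only the $(x_1,\xi_1)$-block and attribute $\partial_1 h\neq 0$ to the contact hypothesis, which is backwards.

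Two smaller points: in the elliptic-elliptic case the Hessian determinant is $a^2b^2$, so you need \emph{both} $a\neq 0$ and $b\neq 0$---this is exactly transversality to each of the two half-line strata meeting at the vertex, not just $(a,b)\neq(0,0)$; and for hyperbolic components the strong form $F = g\circ(q_1,q_2)$ of Eliasson's theorem is not available, only the commutation relations, so one must work with quadrant-wise diffeomorphisms $g_i$ agreeing to infinite order at the origin (the paper does this explicitly).
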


Here by regular level set of $f$ we mean a level set corresponding to
a regular value of $f$.

\begin{proof} Let $L=f\circ F$. Writing
  \[
  \DD L =( \partial_1 f )\DD J + (\partial_2 f)\DD H,
  \]
  we see that if $m$ is a critical point of $L$ then either $c=F(m)$
  is a critical point of $f$ (so $\partial_1 f(c)=
  \partial_2f(c)=0$), or $\DD J(m)$ and $\DD H(m)$ are linearly
  dependent (which means $\operatorname{rank}({\rm T}_mF)<2$).  By
  assumption, these two cases are disjoint: if $c=F(m)$ is a critical
  point of $f$, then $c\not\in\Sigma_F$ which means that $m$ is a
  regular point of $F$.

  Thus $\textup{Crit}(L) \subset F^{-1}(\textup{Crit}(f))\sqcup
  \textup{Crit}(F)$ is a disjoint union of two closed sets. Since
  Hausdorff manifolds are normal, these two closed sets have disjoint
  open neighborhoods. Thus $\textup{Crit}(L)$ is a submanifold if and
  only if both sets are submanifolds, which we prove next.

  \paragraph{Study of $F^{-1}(\textup{Crit}(f))$.}

  Let $m_0\in M$ and $c_0=F(m_0)$. We assume that $c_0$ is a critical
  point of $f$, i.e., $\DD f(c_0)=0$. By hypothesis,
  $\operatorname{rank}\DD_{m_0}F=2$. Since the rank is lower
  semicontinuous, there exists a neighborhood $\Omega$ of $m_0$ in
  which $\operatorname{rank}\DD_{m}F=2$ for all $m \in \Omega$.
  Thus, on $\Omega$, $L=f\circ F$ is critical at a point $m$ if and
  only if $F(m)$ is critical for $f$. Since $f$ is a Morse function,
  its critical points are isolated; therefore we can assume that the
  critical set of $L$ in $\Omega$ is precisely
  $F^{-1}(c_0)\cap\Omega$.

  Since $F^{-1}(c_0)$ is a compact regular fiber (because $F$ is
  proper), it is a finite union of Liouville tori (this is the
  statement of the action-angle theorem; the finiteness comes from the
  fact that each connected component is isolated). In particular,
  $F^{-1}( \operatorname{Crit}(f))$ is a submanifold and we can
  analyze the non-degeneracy component-wise.

  Given any $m\in F^{-1}(c_0)$, the submersion theorem ensures that
  $J$ and $H$ can be seen as a set of local coordinates of a
  transversal section to the fiber $F^{-1}(c_0)$. Thus, using the
  Taylor expansion of $f$ of order 2, we get the 2-jet of $L-L(m_0)$:

\[
L(m)-L(m_0) = \frac{1}{2}\operatorname{Hess}f(m_0)(J(m)-J(m_0),
H(m)-H(m_0))^2 + \text{terms of order } 3,
\]
where
\begin{align}
  &\operatorname{Hess}f(m_0)(J(m)-J(m_0),H(m)-H(m_0))^2 \nonumber\\
  &\quad := (\partial^2_1f)(m_0)(J(m)-J(m_0))^2 +
  2(\partial^2_{1,2}f)(m_0)(J(m)-J(m_0)))(H(m)-H(m_0))
  \nonumber\\
  &\qquad\quad + (\partial_2^2f)(m_0)(H(m) - H(m_0))^2.
  \label{equ:hessian-f}
\end{align}
Again, since $(J,H)$ are taken as local coordinates, we see that the
transversal Hessian of $L$ in the $(J,H)$-variables is non-degenerate,
since $\operatorname{Hess}f(m_0)$ is non- degenerate by assumption
($f$ is Morse).

Thus we have shown that $F^{-1}(\textup{Crit}(f))$ is a smooth
submanifold (a finite union of Liouville tori), transversally to which
the Hessian of $L$ is non-degenerate.

\paragraph{Study of $\textup{Crit}(L) \cap \textup{Crit}(F).$}
Let $m_0\in M$ be a critical point for $F$, and let $c_0=F(m_0)$. By
assumption, $c_0$ is a regular value of $f$.  Thus, there exists an
open neighborhood $V$ of $c_0$ in $\RM^2$ that contains only regular
values of $f$. Therefore, the critical set of $L$ in $F^{-1}(V)$ is
included in $\operatorname{Crit}(F)\cap F^{-1}(V)$. In what follows,
we choose $V$ with compact closure in $\RM^2$ and admitting a
neighborhood in the set of regular values of $f$.

\paragraph{\emph{Case 1: rank 1 critical points.}}

There are 2 types of rank 1 critical points of $F$: elliptic and
hyperbolic. By the Normal Form Theorem \ref{singularities_theorem},
there are canonical coordinates $(x_1,x_2,\xi_1,\xi_2)$ at in a chart
about $m_0$ in which $F$ takes the form
\[
F=g(\xi_1,q),
\]
where $q$ is either $x_2^2+\xi_2^2$ (elliptic case) or $x_2\xi_2$
(hyperbolic case), and $g:\RM^2\fleche\RM^2$ is a local diffeomorphism
of a neighborhood of the origin to a neighborhood of $F(m_0)$,
$g(0)=F(m_0)$.

We see from this that
$\Sigma^V:=\operatorname{Crit}\left(F|_{F^{-1}(V)}\right)$ is the
1-dimensional submanifold $\{g(t,0)\mid \abs{t} \text{small}\}$.

Now consider the case when the level sets of $f$ in $V$ (which are
also 1-dimensional submanifolds of $\RM^2$) are transversal to this
submanifold. We see that the range of $\DD_{m_0}F$ is directed
along the first basis vector $e_1$ in $\RM^2$, which is precisely
tangent to $\Sigma^V$.  Hence $\DD f$ cannot vanish on this vector and
hence
$$
0\neq \DD_{F(m_0)}f\circ \DD_{m_0}F= \DD_{m_0} L.
$$ 
This shows that $L$ has no critical points in $F^{-1}(V)$.

Now assume that there is a level set of $f$ in $V$ that is tangent to
$\Sigma^V$ with non-degenerate contact at the point $g(0,0)$. The
tangency gives the equation $\DD_{F(m_0)} f\cdot (\DD_{(0,0)}g
(e_1))=0$.  Since $L=(f\circ g)(\xi_1,q)$, the equation of
$\operatorname{Crit}(L)$ is
\begin{equation}
  \label{equ:critical}
  \deriv{(f\circ g)}{\xi_1}=0 \quad \text{ and } \quad 
  \deriv{(f\circ g)}{q} \DD q =0.
\end{equation}
Since $\DD f\neq 0$ on $V$ and $g$ is a local diffeomorphism, we have
$\DD (f\circ g)\neq 0$ in a neighborhood of the origin. But the
contact equation gives
$$
\deriv{(f\circ g)}{\xi_1}(0,0)=0
$$ 
so, taking $V$ small enough, we may assume that $\deriv{(f\circ
  g)}{q}$ does not vanish. Hence the second condition in
\eqref{equ:critical} is equivalent to $\DD q=0$, which means
$x_2=\xi_2=0$ (and hence $q=0$).

By definition, the contact is non-degenerate if and only if the
function $t\mapsto f(g(t,0))$ has a non-degenerate critical point at
$t=0$. Therefore, by the implicit function theorem, the first equation
\[
\deriv{(f\circ g)}{\xi_1}(\xi_1,0)=0
\]
has a unique solution $\xi_1=0$. Thus, the critical set of $L$ is of
the form $\{(x_1,\xi_1=0, x_2=0,\xi_2=0)\}$, where $x_1$ is arbitrary
in a small neighborhood of the origin; this shows that the critical
set of $L$ is a smooth 1-dimensional submanifold.

It remains to check that the Hessian of $L$ is transversally
non-degenerate. Of course, we take $(\xi_1,x_2,\xi_2)$ as transversal
variables and we write the Taylor expansion of $L$, for any
$m\in\textup{Crit(L)}$:
\begin{equation}
  L= L(m) + \mathcal{O}(x_1) + \deriv{(f\circ g)}{q} q +
  \frac{1}{2}\frac{\partial^2 (f\circ g)}{\partial \xi_1^2} 
  \xi_1^2 + \mathcal{O}((\xi_1,x_2,\xi_2)^3).
  \label{equ:hessian-rank-one}
\end{equation}
We know that $\deriv{(f\circ g)}{q}\neq 0$ and, by the non-degeneracy
of the contact,
$$
\frac{\partial^2 (f\circ g)}{\partial \xi_1^2}\neq 0.
$$ Recalling that $q=x_2\xi_2$ or
$q=x_2^2+\xi_2^2$, we see that the $(\xi_1,x_2,\xi_2)$-Hessian of $L$
is indeed non-degenerate.

\paragraph{\emph{Case 2: rank 0 critical points.}}
There are 4 types of rank 0 critical point of $F$: elliptic-elliptic,
focus-focus, hyperbolic-hyperbolic, and elliptic-hyperbolic, giving
rise to four subcases.  From the normal form of these singularities
(see Theorem \ref{singularities_theorem}, we see that all of them are
isolated from each other. Thus, since $F$ is proper, the set of rank 0
critical points of $F$ is finite in $F^{-1}(V)$.

Again, let $m_0$ be a rank 0 critical point of $F$ and $c_0:=F(m_0)$.

\begin{enumerate}[(a)]

\item{ \emph{Elliptic-elliptic subcase.}}

  In the elliptic-elliptic case, the normal form is
  \[
  F=g(q_1,q_2),
  \]
  where $q_i=(x_i^2+\xi_i^2)/2$.  The critical set of $F$ is the union
  of the planes $\{z_1=0\}$ and $\{z_2=0\}$ (we use the notation
  $z_j=(x_j,\xi_j)$). The corresponding critical values in $V$ is the
  set
$$
\Sigma^V:=\{g(x=0,y\geq 0)\}\cup\{g(x\geq 0,y=0)\}
$$ 
(the $g$-image of the closed positive quadrant).
 
The transversality assumption on $f$ amounts here to saying that the
level sets of $f$ in a neighborhood of $c_0$ intersect $\Sigma^V$
transversally; in other words, the level sets of $h:=f\circ g$
intersect the boundary of the positive quadrant transversally. Up to
further shrinking of $V$, this amounts to requiring $\DD_z h(e_1)\neq
0$, $\DD_z h(e_2)\neq 0$ for all $z\in g^{-1}(V)$, where $(e_1,e_2)$
is the canonical $\RM^2$-basis.

Any critical point $m$ of $F$ different from $m_0$ is a rank 1
elliptic critical point.  Since the level sets of $f$ don't have any
tangency with $\Sigma^V$, we know from the rank 1 case above that $m$
cannot be a critical point of $L$. Hence $m_0$ is an isolated critical
point for $L$.

The Hessian of $L$ at $m_0$ is calculated via the normal form: it has
the form $aq_1+bq_2$, with $a=\DD_0h(e_1)$ and $b=\DD_0 h(e_2)$.  The
Hessian determinant is $a^2b^2$. The transversality assumption implies
that both $a$ and $b$ are non-zero which means that the Hessian is
non-degenerate.

\item{\emph{Focus-focus subcase.}}

  The focus-focus critical point is isolated, so we just need to prove
  that the Hessian of $L$ is non-degenerate. But the 2-jet of $L$ is
  \begin{align*}
    L(m)-L(m_0) &= (\partial_1 f)(F(m_0)(J(m) - J (m_0))\\
    & \qquad + (\partial_2 f)(F(m_0)(H(m) - H(m_0)) + \text{terms of
      order } 3.
  \end{align*}
  Thus, in normal form coordinates (see Theorem
  \ref{singularities_theorem}), as in the previous case, it has the
  form $a q_1 + b q_2$, where this time $q_1$ and $q_2$ are the
  focus-focus quadratic forms given in
  Theorem~\ref{singularities_theorem}(iii). The Hessian determinant is
  now $(a^2+b^2)^2$, which does not vanish.

\item{\emph{Hyperbolic\--hyperbolic subcase.}}

  Here the local model for the foliation is $q_1=x_1\xi_1$,
  $q_2=x_2\xi_2$. However, the formulation $F=g(q_1,q_2)$ may not
  hold; this is a well-known problem for hyperbolic fibers.
  Nevertheless, on each of the 4 connected components of of
  $\RM^4\setminus(\{x_1=0\}\cup \{x_2=0\})$, we have a diffeomorphism
  $g_i$, $i={1,2,3,4}$ such that $F=g_i(q_1,q_2)$. These four
  diffeomorphisms agree up to a flat map at the origin (which means
  that their Taylor series at $(0,0)$ are all the same).

  Thus, the critical set of $F$ in these local coordinates is the
  union of the sets $\{q_1=0\}$ and $\{q_2=0\}$: this is the union of
  the four coordinate hyperplanes in $\RM^4$.  The corresponding set
  of critical values in $V$ is the image of the coordinate axes:
  \[
  \Sigma^V:=\bigcup_{i=1,2,3,4}\{g_i(0,y)\}\cup\{g_i(x,0)\},
  \]
  where $x$ and $y$ both vary in a small neighborhood of the origin in
  $\RM$.

  For each $i$ we let $h_i:=f\circ g_i$. As before, the transversality
  assumption says that the values $\DD_0 h_i(e_1)$ and $\DD_0
  h_i(e_2)$ (which don't depend on $i=1,2,3,4$ at the origin of
  $\RM^2$) don't vanish in $V$. Thus, as in the elliptic-elliptic
  case, the level sets of $f$ don't have any tangency with
  $\Sigma^V$. Hence no rank 1 critical point of $F$ can be a critical
  point of $L$, which shows that $m_0$ is thus an isolated critical
  point of $L$.

  The Hessian determinant of $aq_1+bq_2$ is again $a^2b^2$ with $a
  \neq 0$ and $b \neq 0$; thus the Hessian of $L$ at $m_0$ is
  non-degenerate.

\item{\emph{Hyperbolic\--elliptic subcase.}}

  We still argue as above. However, the Hessian determinant in this
  case is $-a^2b^2 \neq 0$.
\end{enumerate}

Summarizing, we have proved that rank 0 critical points of $F$
correspond to isolated critical points of $L$, all of them
non-degenerate.

Putting together the discussion in the rank 1 and 0 cases, we have
shown that the critical set of $L$ consist of isolated non-degenerate
critical points and isolated 1-dimensional submanifolds on which the
Hessian of $L$ is transversally non-degenerate. This means that $L$ is
a Morse-Bott function.
\end{proof}

\subsection{Contact points and Morse-Bott indices}

Since we have calculated all the possible Hessians, it is easy to
compute the various indices that can occur. We shall need a particular
case, for which we introduce another condition on $f$.

\begin{definition}\label{defi:outward}
  Let $(M,\, \omega)$ be a connected symplectic four-manifold.  Let
  $F:M\to\RM^2$ be an almost-toric system with critical value set
  $\Sigma_F$.  A smooth curve $\gamma$ in $\RM^2$ is said to have an
  \emph{outward contact} with $F(M)$ at a point $c\in F(M)$ when there
  is a small neighborhood of $c$ in which the point $\{c\}$ is the
  only intersection of $\gamma$ with $F(M)$.
\end{definition}
\begin{figure}[h]
  \centering
  \includegraphics[width=0.4\textwidth]{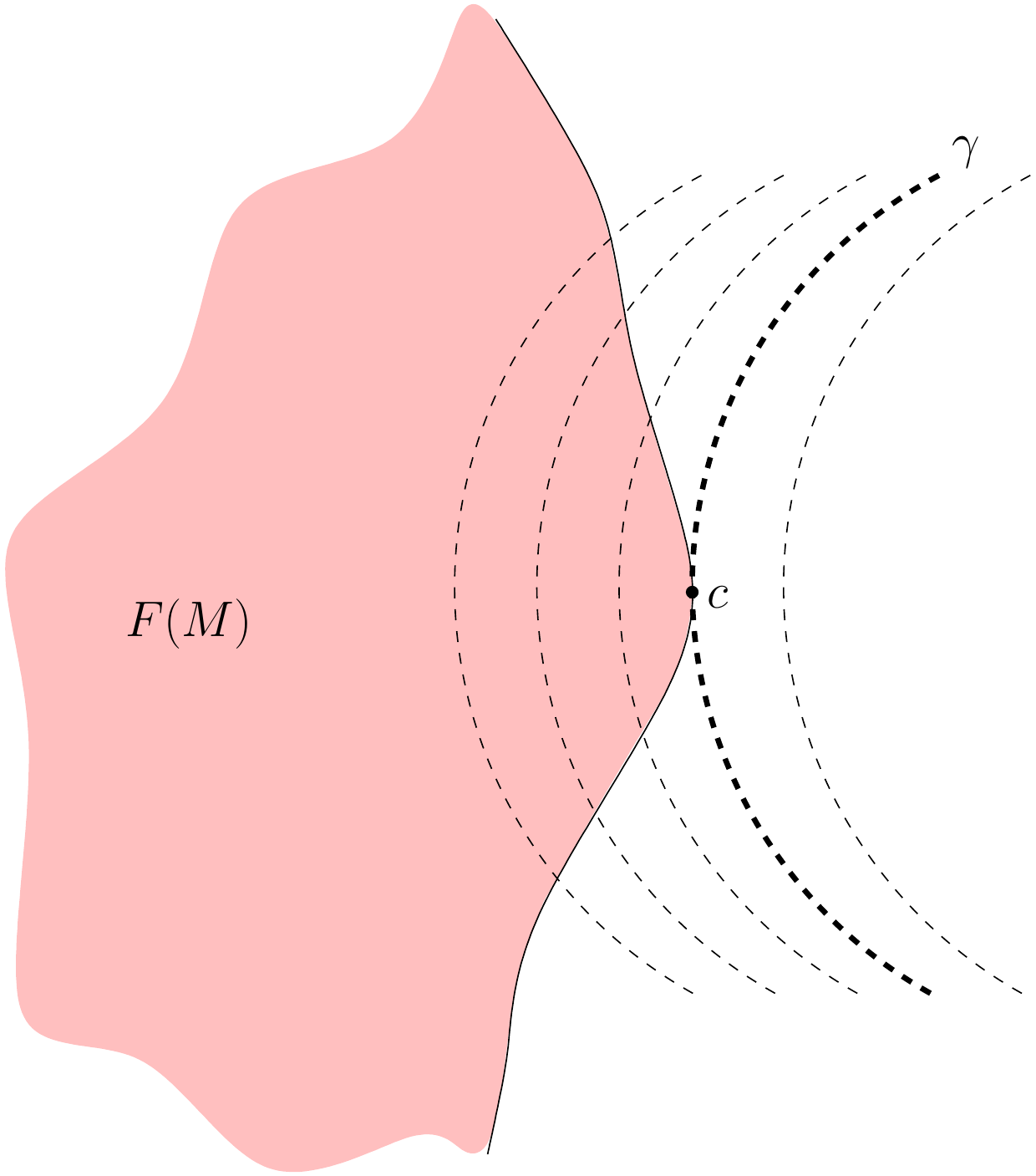}
  \caption{An outward contact point.}
  \label{fig:contact}
\end{figure}
In the proof below we give a characterization in local coordinates.

\begin{prop} \label{prop:bott-index} Let $(M,\, \omega)$ be a
  connected symplectic four-manifold.  Let $F:M\to\RM^2$ be an
  almost-toric system with critical value set $\Sigma_F$. Let $f$ be a
  Morse function defined on an open neighborhood of $F(M)\subset\RM^2$
  such that
  \begin{enumerate}[\rm (i)]
  \item \label{bott-index-hyp1} The critical set of $f$ is disjoint
    from $\Sigma_F$;
  \item \label{bott-index-hyp2} $f$ has no saddle points in $F(M)$;
  \item \label{bott-index-hyp3} the regular level sets of $f$
    intersect $\Sigma_F$ transversally or have a non-degenerate
    outward contact with $F(M)$. (See definitions \ref{defi:contact}
    and \ref{defi:outward}.)
  \end{enumerate}
  Then $f\circ F:M\to \RM$ is a Morse-Bott function with all indices
  and co-indices equal to 0, 2, or 3.
\end{prop}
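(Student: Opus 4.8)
The plan is to reduce the Morse--Bott assertion to Theorem~\ref{prop:bott} and then to read off the signatures of the transversal Hessians at each type of critical submanifold, recycling the local Hessian computations carried out in the proof of that theorem. Since a non-degenerate \emph{outward} contact is in particular a non-degenerate contact (Definition~\ref{defi:contact}), hypotheses (i) and (iii) subsume the hypotheses of Theorem~\ref{prop:bott} (hypothesis (ii) is not needed for this step), so $L:=f\circ F$ is Morse--Bott. (Theorem~\ref{prop:bott} is stated for proper $F$, but only its local conclusions and local Hessian formulas are used below, and these are unaffected by properness.) From the proof of Theorem~\ref{prop:bott}, near $\operatorname{Crit}(L)$ the critical set is the disjoint union of the closed sets $F^{-1}(\operatorname{Crit}(f))$ and $\operatorname{Crit}(F)$; and since $F$ is almost-toric, Theorem~\ref{singularities_theorem} shows $\operatorname{Crit}(F)$ consists of transversally elliptic points (rank $1$) and of elliptic-elliptic and focus-focus points (rank $0$). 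So there are four kinds of critical submanifold, and for each I must verify that the index and the coindex of $L$ lie in $\{0,2,3\}$.

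Two of the four cases are immediate. If $m_0\in F^{-1}(\operatorname{Crit}(f))$ and $c_0:=F(m_0)$, then $c_0\in\operatorname{Crit}(f)\cap F(M)$, so by hypothesis (i) $c_0\notin\Sigma_F$; hence $c_0$ is a regular value, $F^{-1}(c_0)$ is a union of Liouville tori, and the proof of Theorem~\ref{prop:bott} identifies the transversal Hessian of $L$ there with $\operatorname{Hess}f(c_0)$ written in the coordinates $(J,H)$ of a transversal section. Hypothesis (ii) forces $\operatorname{Hess}f(c_0)$ to be definite, so the index is $0$ or $2$ and the coindex is $2-\text{index}$. At a focus-focus point, in the normal-form coordinates of Theorem~\ref{singularities_theorem} the Hessian of $L$ is $aq_1+bq_2$ with $q_1,q_2$ the focus-focus quadratic forms; its $4\times 4$ matrix is block off-diagonal with a nonzero $2\times 2$ block both of whose singular values equal $\sqrt{a^2+b^2}$, so its eigenvalues are $\pm\sqrt{a^2+b^2}$, each with multiplicity $2$ and nonzero since the Hessian is non-degenerate. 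Thus the index and coindex both equal $2$.

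At a transversally elliptic (rank $1$) point $m_0$ with $c_0:=F(m_0)$, the rank $1$ analysis in the proof of Theorem~\ref{prop:bott} shows $m_0\in\operatorname{Crit}(L)$ only if the level curve of $f$ through $c_0$ is tangent to the $1$-stratum of $\Sigma_F$ through $c_0$; by hypothesis (iii) this tangency is then a non-degenerate outward contact with $F(M)$. In the normal form $F=g(\xi_1,q)$ with $q=x_2^2+\xi_2^2$, near $c_0$ one has $F(M)=g(\{q\geq 0\})$ and the $1$-stratum equals $g(\{q=0\})$; writing $h:=f\circ g$ and normalizing $h(0)=0$, tangency says $\partial_{\xi_1}h(0)=0$ and non-degeneracy says $\partial_{\xi_1}^2 h(0)\neq 0$, so by the implicit function theorem the level curve is a graph $q=\psi(\xi_1)$ with $\psi(0)=\psi'(0)=0$ and $\psi''(0)$ of the sign of $-\partial_{\xi_1}^2 h(0)/\partial_q h(0)$. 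The outward condition, namely $\psi(\xi_1)<0$ for all small $\xi_1\neq 0$, forces $\psi''(0)<0$, i.e.\ $\partial_{\xi_1}^2 h(0)$ and $\partial_q h(0)$ have the same sign. By formula~\eqref{equ:hessian-rank-one} the transversal Hessian of $L$ in $(\xi_1,x_2,\xi_2)$ is $\operatorname{diag}\bigl(\partial_{\xi_1}^2 h(0),\,2\partial_q h(0),\,2\partial_q h(0)\bigr)$, which is therefore definite; so the index is $0$ or $3$ and the coindex is $3-\text{index}$.

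The remaining case, an elliptic-elliptic (rank $0$) point $m_0$, is the delicate one, and is where I expect the main work. In the normal form $F=g(q_1,q_2)$ with $q_i=(x_i^2+\xi_i^2)/2$, the Hessian of $L$ at $m_0$ is $\operatorname{diag}(a,a,b,b)$ with $a=\partial_1 h(0)$, $b=\partial_2 h(0)$ and $h=f\circ g$; the index equals $2$ exactly when $ab<0$ and equals $0$ or $4$ otherwise, so I must exclude $ab>0$. Near $c_0:=g(0,0)$, the set $F(M)$ is the curvilinear wedge $g(\{q_1\geq 0,\ q_2\geq 0\})$ and $\Sigma_F$ is the image of the two positive coordinate half-axes, which meet exactly at $c_0$; by hypothesis (i), $c_0$ is a regular point of $f$, so the level curve $\gamma_0$ of $f$ through $c_0$ is smooth there. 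A tangency of $\gamma_0$ to either of these half-axis images would be a tangency at an \emph{endpoint} of a $1$-stratum, hence degenerate, and is excluded by hypothesis (iii); so $\gamma_0$ is transversal to both, which gives $a\neq 0$ and $b\neq 0$. Moreover, were $\gamma_0$ to meet $F(M)$ only at $c_0$ locally, that would be an outward contact of $\gamma_0$ with $F(M)$ at $c_0$ which, no $1$-stratum being smooth at $c_0$, cannot be a non-degenerate contact, and is again excluded by hypothesis (iii); so $\gamma_0$ must enter the wedge near $c_0$. Since in the $(q_1,q_2)$ chart $\gamma_0$ has tangent direction along $(-b,a)$, this forces $a$ and $b$ to have opposite signs, hence $ab<0$, the index is $2$, and the case analysis --- and the proof --- is complete. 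The crux is this last paragraph: one must argue carefully that hypothesis (iii), via its ``outward'' clause applied at a rank-$0$ value, forbids the level curve of $f$ from staying (locally) on one side of the wedge $F(M)$, and then keep track of which sign pattern of $(a,b)$ survives; the transversally elliptic sign computation is similar in spirit but cleaner, while the two easy cases are bookkeeping on top of the Hessian formulas already established for Theorem~\ref{prop:bott}.
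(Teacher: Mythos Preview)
Your proof tracks the paper's closely and three of the four cases are handled correctly. The gap is in the elliptic-elliptic paragraph. Having established that $\gamma_0$ is transversal to both $1$-strata at $c_0$ (so $a,b\neq 0$), you then argue: if $\gamma_0$ met $F(M)$ only at $c_0$, this outward contact could not be non-degenerate at the corner, hence hypothesis~(iii) would fail. But hypothesis~(iii) is a disjunction, and you have just shown the \emph{transversal} alternative already holds; so~(iii) is satisfied regardless of whether $\gamma_0$ enters the wedge. Concretely, with $a,b>0$ the tangent direction $(-b,a)$ to $\gamma_0$ at the origin points into the second quadrant, so $\gamma_0$ meets the closed first quadrant only at the origin while remaining transversal (in the sense of Definition~\ref{defi:contact}) to both half-axes; taking $f(x,y)=x+y$ near an elliptic-elliptic value gives exactly this, with index~$0$ and coindex~$4$. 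So $ab>0$ cannot be excluded from the hypotheses.

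The paper does not try to exclude it. Its rank-$0$ argument simply observes that the Hessian determinant is $a^2b^2$ (elliptic-elliptic) or $(a^2+b^2)^2$ (focus-focus), hence positive, so the index and coindex are even --- i.e.\ in $\{0,2,4\}$, not $\{0,2\}$. Thus the list ``$0$, $2$, or $3$'' in the statement should read ``$0$, $2$, $3$, or $4$'' (equivalently: never $1$), and this weaker conclusion is exactly what Proposition~\ref{prop:levelset} requires downstream. Your transversally-elliptic sign computation and your focus-focus eigenvalue calculation agree with the paper's (you are more explicit in the latter), so once you replace your elliptic-elliptic paragraph by the one-line parity observation, your argument is the paper's.
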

\begin{proof}
  Because of Theorem \ref{prop:bott}, we just need to prove the
  statement about the indices of $f$.  At points of
  $F^{-1}(\textup{Crit}(f))$, we saw in \eqref{equ:hessian-f} that the
  transversal Hessian of $f\circ F$ is just the Hessian of $f$. By
  assumption, $f$ has no saddle point, so its (co)index is either $0$
  or $2$.  We analyze the various possibilities at points of
  $\textup{Crit}(F)$. There are two possible rank 0 cases for an
  almost-toric system: elliptic-elliptic and focus-focus.  At such
  points, the Hessian determinant is positive (see Theorem
  \ref{singularities_theorem}), so the index and co-index are even.

  In the rank 1 case, for an almost-toric system, only transversally
  elliptic singularities are possible.  We are interested in the case
  of a tangency (otherwise $f\circ F$ has no critical point). The
  Hessian is computed in \eqref{equ:hessian-rank-one} and we use below
  the same notations.  The level set of $f$ through the tangency point
  is given by $f(x,y)=f(g(0,0))$. We switch to the coordinates
  $(\xi_1,q)=g^{-1}(x,y)$, where the local image of $F$ is the
  half-space $\{q\geq 0\}$. Let
$$
h=f\circ g-f(g(0,0)).
$$ 
The level set of $f$ is $h(\xi_1,q)=0$, and $h$ satisfies $\DD_0 h\neq
0$, $\deriv{h}{\xi_1}(0,0)=0$, $\frac{\partial^2
  h}{\partial\xi_1^2}(0,0)\neq 0$ (this is the non-degeneracy
condition in Definition \ref{defi:contact}).  By the implicit function
theorem, the level set $\{h=0\}$ near the origin is the graph
$\{(\xi_1,q)\mid q=\phy(\xi_1)\}$, where
\[
\phy'(0)=0, \qquad \phy''(0) = \frac{-\frac{\partial^2
    h}{\partial\xi_1^2}(0,0)} {\deriv{h}{q_2}(0,0)}\;.
\]
This level set has an outward contact if and only if $\phy''(0)<0$ or,
equivalently, $\frac{\partial^2 h}{\partial\xi_1^2}(0,0)$ and
$\deriv{h}{q_2}(0,0)$ have the same sign. From
\eqref{equ:hessian-rank-one} we see that the index and coindex can
only be 0 or 3.
\end{proof}

\subsection{Proof of Theorem \ref{theo:main-connectivity} and Theorem
  \ref{theo:main-image}}

We conclude by proving the two theorems in the introduction. Both will
rely on the following result. In the statement below, we use the
stratified structure of the bifurcation set of a non-degenerate
integrable system, as given by Proposition~\ref{prop:strata}.

\begin{prop} \label{prop:J-connected} Let $(M,\, \omega)$ be a
  connected symplectic four-manifold.  Let $F\colon M \to
  \mathbb{R}^2$ be an almost\--toric system such that $F$ is proper.
  Denote by $\Sigma_F$ the bifurcation set of $F$. Assume that there
  exists a diffeomorphism $g \colon F(M) \to \mathbb{R}^2$ onto its
  image such that:
  \begin{enumerate}[{\rm (i)}]
  \item \label{hyp:cone} $g(F(M))$ is included in a proper convex cone
    $C_{\alpha,\, \beta}$ (see Figure~\ref{fig:cone}).
  \item \label{hyp:tangencies} $g(\Sigma_F)$ does not have vertical
    tangencies (see Figure~\ref{fig:diffeo}).
  \end{enumerate}
  Write $g\circ F = (J,H)$. Then $J$ is a Morse-Bott function with
  connected level sets.
\end{prop}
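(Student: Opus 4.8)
The plan is to realise $J$ as the pullback $f\circ F$ of a critical-point-free Morse function $f$ defined near $F(M)$, and then to invoke Theorem~\ref{prop:bott}, Proposition~\ref{prop:bott-index} and Proposition~\ref{prop:levelset} in turn. First I would extend $g$: since $g$ is a diffeomorphism onto its image, $\DD g$ is invertible along the closed set $F(M)$, so $g$ admits a smooth extension $\hat g$ to an open neighbourhood $U\supseteq F(M)$ which, after shrinking $U$, is a local diffeomorphism. Writing $\ell(x,y)=x$, set $f:=\ell\circ\hat g\colon U\to\RM$. Then $\DD f=\DD\ell\circ\DD\hat g$ vanishes nowhere, so $f$ is (vacuously) a Morse function with empty critical set; in particular it has no critical point in $F(M)$, no saddle point, and its critical set is disjoint from $\Sigma_F$. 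By construction $J=f\circ F$ on $M=F^{-1}(U)$.

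The crux is to turn hypothesis~\ref{hyp:tangencies} into the transversality requirement shared by Theorem~\ref{prop:bott} and Proposition~\ref{prop:bott-index}. Near $\Sigma_F$ the level sets of $f$ are precisely the $\hat g$-preimages of the vertical lines $\{x=\mathrm{const}\}$, i.e.\ they correspond under $g$ to the vertical slices of $g(F(M))$. Using the stratified picture of $\Sigma_F$ from Proposition~\ref{prop:strata} (recall $F$ is almost-toric, so its critical values come only from elliptic-elliptic, focus-focus and transversally-elliptic points): the statement that $g(\Sigma_F)$ has no vertical tangencies says exactly that no $1$-dimensional stratum of $g(\Sigma_F)$ is ever vertical, hence every vertical line is transverse to every $1$-stratum; and at the $0$-dimensional strata --- the focus-focus values, isolated points in the interior of $g(F(M))$, and the elliptic-elliptic values, common endpoints of two boundary $1$-strata --- the tangent cone of $g(\Sigma_F)$ consists only of the one-sided limits of the adjacent $1$-strata, all non-vertical, so vertical lines are transverse there too. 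Thus the regular level sets of $f$ meet $\Sigma_F$ transversally everywhere, and conditions (i)--(iii) of Proposition~\ref{prop:bott-index} hold (condition (ii) being automatic, as $f$ has no critical points). That proposition then gives that $J$ is Morse-Bott with every index and co-index different from $1$. Equivalently, one can check directly that $\textup{Crit}(J)$ is just the set of rank-$0$ critical points of $F$ (there being no rank-$1$ critical points of $J$, since the vertical level lines are never tangent to the rank-$1$ strata), that at an elliptic-elliptic point the Hessian of $J$ is $\tfrac a2(x_1^2+\xi_1^2)+\tfrac b2(x_2^2+\xi_2^2)$ with $a,b\neq 0$ by the transversality just obtained, and that at a focus-focus point it is $a q_1+b q_2$ with $(a,b)\neq 0$ and determinant $(a^2+b^2)^2>0$ --- in every case an even, hence $\neq 1$, index.

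To finish I would apply Proposition~\ref{prop:levelset}, which requires $J$ to be proper; this is where hypothesis~\ref{hyp:cone} enters. For compact $K\subset\RM$ one has $J^{-1}(K)=F^{-1}\bigl(g^{-1}(g(F(M))\cap(K\times\RM))\bigr)$, and since $g(F(M))\subseteq C_{\alpha,\beta}$ with $C_{\alpha,\beta}$ a \emph{proper} cone --- so that $C_{\alpha,\beta}\cap(K\times\RM)$ is compact, which is exactly what the requirement $\alpha,\beta>0$, $\alpha+\beta<\pi$ guarantees --- the set $g(F(M))\cap(K\times\RM)$ is bounded; combined with $F(M)$ being closed (because $F$ is proper) and $g$ being a diffeomorphism onto its image, this forces $J^{-1}(K)$ to be compact. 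With $M$ connected, $J$ proper and Morse-Bott, and no index or co-index equal to $1$, Proposition~\ref{prop:levelset} yields that the level sets of $J$ are connected, which is the assertion.

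I expect the real work to be the local analysis at the rank-$0$ critical values: checking that ``no vertical tangencies'' genuinely forces the vertical level lines to be transverse to $\Sigma_F$ at the elliptic-elliptic corners and at the focus-focus points, and hence that the Hessian of $J$ is non-degenerate there --- this is what makes Proposition~\ref{prop:bott-index} applicable. The properness of $J$ is more routine, but it is the one place where the cone hypothesis~\ref{hyp:cone} is genuinely used.
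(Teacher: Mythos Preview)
Your proposal is correct and follows essentially the same route as the paper. The only cosmetic difference is that the paper replaces $F$ by $\tilde F:=g\circ F$ (which is again almost-toric and proper) and then applies Proposition~\ref{prop:bott-index} with the bare projection $f(x,y)=x$, whereas you keep $F$ and absorb $g$ into $f=\ell\circ\hat g$ via a smooth extension; the transversality check, the index computation, and the use of the cone hypothesis to obtain properness of $J$ before invoking Proposition~\ref{prop:levelset} are identical in both arguments.
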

\begin{proof}
  Let $\tilde{F}:=g\circ F$. The set of critical values of $\tilde{F}$
  is $\tilde{\Sigma}=g(\Sigma_F)$. We wish to apply
  Proposition~\ref{prop:bott-index} to this new map $\tilde{F}$.  Let
  $f:\RM^2\to\RM$ be the projection on the first coordinate:
  $f(x,y)=x$, so that $f\circ \tilde{F}=J$. Since $f$ has no critical
  points, it satisfies the hypotheses~\eqref{bott-index-hyp1}
  and~\eqref{bott-index-hyp2} of
  Proposition~\ref{prop:bott-index}. The regular levels sets of $f$
  are the vertical lines, and the fact that $\tilde{\Sigma}$ has no
  vertical tangencies means that the regular level sets of $f$
  intersect $\tilde{\Sigma}$ transversally. Thus the last
  hypothesis~\eqref{bott-index-hyp3} of
  Proposition~\ref{prop:bott-index} is fulfilled and we conclude that
  $J$ is a Morse-Bott function whose indices and co-indices are always
  different from 1.

  Now, since $\tilde{F}$ is proper, the fact that $\tilde{F}(M)$ is
  included in a cone $C_{\alpha,\beta}$ easily implies that $f\circ
  \tilde{F}$ is proper.  Thus, using Proposition~\ref{prop:levelset},
  we conclude that $J$ has connected level sets.
\end{proof}

\begin{proof}[Proof of theorem~\ref{theo:main-connectivity}]
  By Proposition \ref{prop:J-connected}, we get that $J$ has connected
  level sets.  It is enough to apply Theorem \ref{theo:fibers} to
  conclude that $\tilde{F}$ (and thus $F$) has connected fibers.
\end{proof}

\begin{proof}[Proof of theorem~\ref{theo:main-image}]
  Using again Proposition \ref{prop:J-connected}, we conclude that $J$
  is a Morse-Bott function with connected level sets.  By the
  definition of an integrable system, $J$ cannot be constant (its
  differential would vanish everywhere). Thus we can apply Theorem
  \ref{imagemomentum:theorem}, which yields the desired conclusion.
\end{proof}

It turns out that even in the compact case,
Theorem~\ref{theo:main-connectivity} has quite a striking corollary,
which we stated as Theorem~\ref{theo:striking} in the introduction.

\begin{proof}[Proof of Theorem~\ref{theo:striking}]
  The last two cases (a disk with two conic points and a polygon) can
  be transformed by a diffeomorphism as in
  Theorem~\ref{theo:main-connectivity0} to remove vertical tangencies,
  and hence the theorem implies that the fibers of $F$ are connected.

  For the first two cases, we follow the line of the proof of
  theorem~\ref{theo:main-connectivity0}. The use of
  Proposition~\ref{prop:bott-index} is still valid for the same
  function $f(x,y)=x$ even if now the level sets of $f$ can be tangent
  to $\Sigma_F$. Indeed one can check that here only non-degenerate
  outward contacts occur. Then one can bypass
  Proposition~\ref{prop:J-connected} and directly apply
  Proposition~\ref{prop:levelset}. Therefore the conclusion of the
  theorem still holds.
\end{proof}

\section{The spherical pendulum} \label{sp:section}

The goal of this section is to prove that the spherical pendulum is a
non-degenerate integrable system to which our theorems apply. The
configuration space is $S^2$.

We identify the phase space ${\rm T}^\ast S^2$ with the tangent bundle
${\rm T}S^2$ using the standard Riemannian metric on $S^2$ naturally
induced by the inner product on $\mathbb{R}^3$. Denote the points in
$\mathbb{R}^3$ by $\mathbf{q}$. The conjugate momenta are denoted by
$\mathbf{p}$ and hence the canonical one- and two-forms are $p_i\DD
q^i$ and $\DD q^i\wedge \DD p _i$, respectively.  The manifold ${\rm
  T}S ^2$ has its own natural exact symplectic structure.  It is easy
to see that $\iota:{\rm T} S ^2 \hookrightarrow \mathbb{R}^3$ is a
symplectic embedding since $\iota^\ast\left(p_i\DD q^i\right)$
coincides with the canonical one-form on ${\rm T}S^2$. The action
given by rotations about the $\mathbf{k}$-axis is given by
\begin{equation}
  \label{action_formula}
  \mathbf{q}\in \mathbb{R}^3 \mapsto
  \begin{bmatrix}
    \cos \psi &-\sin \psi&0\\
    \sin \psi & \cos \psi& 0\\
    0&0& 1
  \end{bmatrix} \mathbf{q} \in \mathbb{R}^3, \qquad \psi \in
  \mathbb{R}. \nonumber
\end{equation}
The equations of motion determined by the infinitesimal generator of
the Lie algebra element $1 \in \mathbb{R}$ for the lifted action to
${\rm T} \mathbb{R}^3 = \mathbb{R}^3\times \mathbb{R}^3$ are
\begin{equation}
  \label{equ_momentum}
  \dot{\mathbf{q}} = - \mathbf{q} \times \mathbf{k}, \qquad 
  \dot{\mathbf{p}} = - \mathbf{p}\times \mathbf{k}.
\end{equation}
The $S^1$-invariant momentum map of this action is given by (see,
e.g., \cite[Theorem 12.1.4]{MaRa2003}) $J_{T \mathbb{R}^3}(\mathbf{q},
\mathbf{p}) = (\mathbf{q}\times \mathbf{p}) \cdot \mathbf{k} = q^1p_2
- q^2p_1$. Note that ${\rm T}S^2$ is invariant under the flow of
\eqref{equ_momentum} and thus $J:=J_{{\rm T} \mathbb{R}^3}|_{{\rm
    T}S^2}:{\rm T}S^2 \rightarrow \mathbb{R}$ given for all
$(\mathbf{q}, \mathbf{p}) \in {\rm T}S^2$ is the momentum map of the
$S^1$-action on ${\rm T}S^2$. In particular, the equations of motion
of the Hamiltonian vector field $\mathcal{H}_J $ on ${\rm T}S^2$ are
\eqref{equ_momentum}.  We also have $J({\rm T}S^2) = \mathbb{R}$;
indeed, if $\mathbf{q}=(1,0,0)$ and $\mathbf{p}=(0,p_2, p_3)$, then
$$
J(\mathbf{q}, \mathbf{p}) = q^1p_2 - q^2p_1 = p_2
$$ 
is an arbitrary element of $\mathbb{R}$. The momentum map $J$ is
\textit{not} proper: the sequence $(\mathbf{q}_n, \mathbf{p}_n):=
((0,0,1), (n,n,0))\in {\rm T} S^2$ does not contain any convergent
subsequence and the sequence of images $J(\mathbf{q}_n, \mathbf{p}_n)
=0$ is constant, hence convergent.

Let us describe the equations of motion. The Hamiltonian of the
spherical pendulum is
\begin{equation}
  H(\mathbf{q}, \mathbf{p})= \frac{1}{2}\|\mathbf{p}\|^2 +
  \mathbf{q}\cdot \mathbf{k}, (\mathbf{q},\mathbf{p}) \in {\rm T}S^2,
  \label{equ:H}
\end{equation}
where $\mathbf{i}, \mathbf{j}, \mathbf{k}$ is the standard orthonormal
basis of $\mathbb{R}^3$ with $\mathbf{k}$ aligned with and pointing
opposite the direction of gravity and we set all parametric constants
equal to one.  The equations of motion for $(\mathbf{q}, \mathbf{p})
\in {\rm T}S^2$ are given by
\begin{equation}
  \left\{
    \label{equ_shperical_pendulum}
    \begin{aligned}
      \dot{\mathbf{q}}&= \mathbf{p} - \frac{\mathbf{q}\cdot
        \mathbf{p}}{\|\mathbf{q}\|^2}\mathbf{q}
      = \mathbf{p}\\
      \dot{\mathbf{p}}&=-\mathbf{k} -
      \frac{\|\mathbf{p}\|^2}{\|\mathbf{q}\|^2} \mathbf{q} +
      \frac{\mathbf{q}\cdot\mathbf{k}}{\|\mathbf{q}\|^2}\mathbf{q} +
      \frac{\mathbf{q}\cdot \mathbf{p}}{\| \mathbf{q}\|^2} \mathbf{p}
      = - \mathbf{k} + \left(\mathbf{q} \cdot \mathbf{k}-
        \|\mathbf{p}\|^2\right) \mathbf{q}.
    \end{aligned}
  \right.
\end{equation}

\begin{figure}[h]
  \centering
  \includegraphics[width=0.35\textwidth]{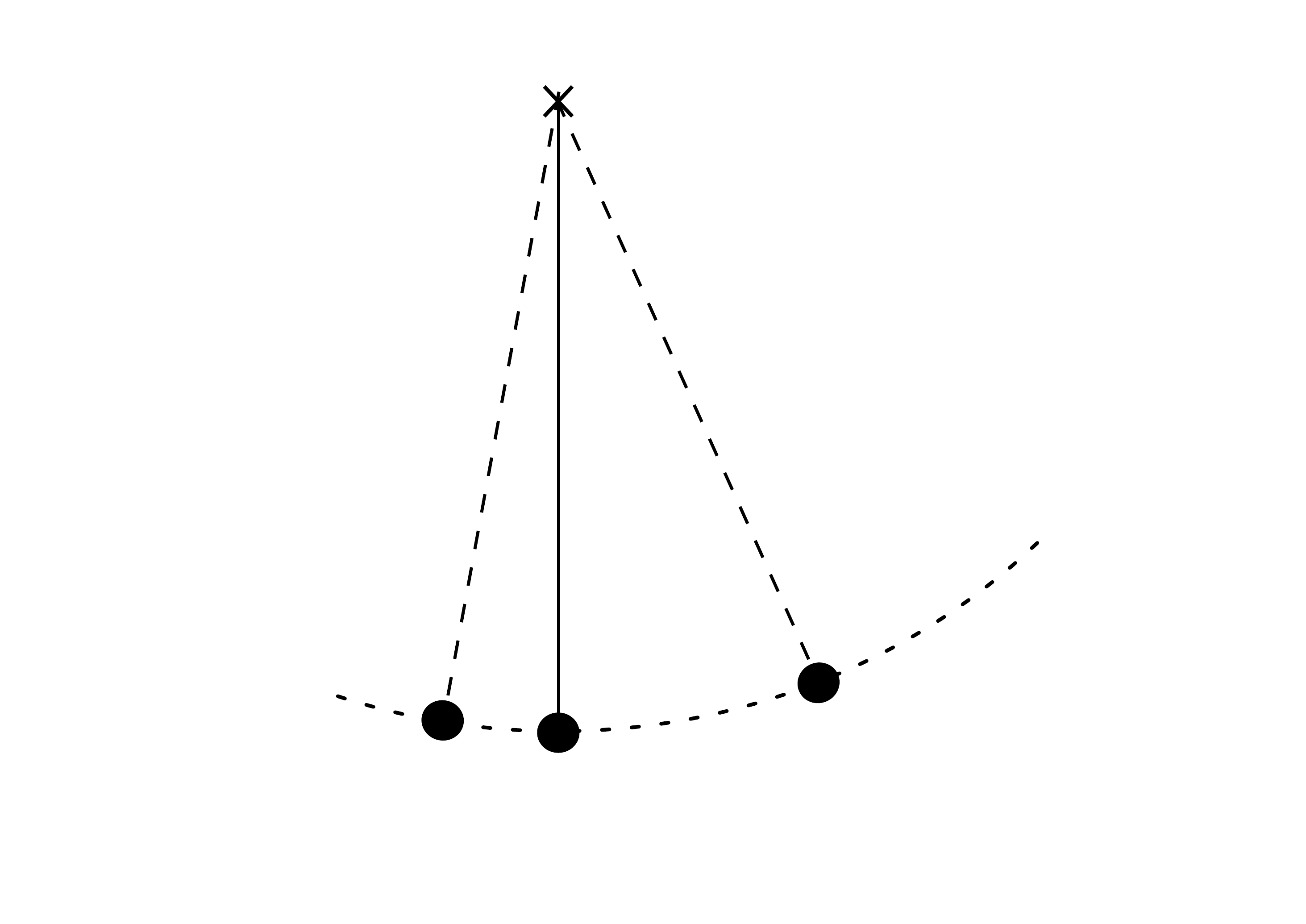}
  \caption{The spherical pendulum.}
\end{figure}

\begin{theorem}
  \label{sphericalpendulum:theorem}
  Let $F:=(J,\,H) \colon M \to \R^2$ be the singular fibration
  associated with the spherical pendulum.
  \begin{itemize}
  \item[{\rm (1)}] $F$ is an integrable system.
  \item[{\rm (2)}] The singularities of $F$ are non\--degenerate.
  \item[{\rm (3)}] The singularities of $F$ are of focus-focus,
    elliptic-elliptic, or transversally elliptic-type so, in
    particular, $F$ has no hyperbolic singularities.  There is
    precisely one elliptic-elliptic singularity at
    $((0,0,-1),(0,0,0))$, one focus-focus singularity at
    $((0,0,1),(0,0,0))$, and uncountably many singularities of
    transversally-elliptic type.
  \item[{\rm (4)}] $H$ is proper and hence $F$ is also proper, even
    though $J$ is \emph{not} proper.
  \item[{\rm (5)}] The critical set of $F$ and the bifurcation set of
    $F$ are equal and given in Figure
    \ref{critical_set_spherical_pendulum.figure}: it consists of the
    boundary of the planar region therein depicted and the interior
    point which corresponds to the image of the only focus-focus point
    of the system; see point (3) above.
  \item[{\rm (6)}] The fibers of $F$ are connected.
  \item[{\rm (7)}] The range of $F$ is equal to planar region in
    Figure \ref{critical_set_spherical_pendulum.figure}. The image
    under $F$ of the focus-singularity is the point $(0,\,1)$.  The
    image under $F$ of the elliptic-elliptic singularity is the point
    $(0,\,-1)$.
  \end{itemize}
\end{theorem}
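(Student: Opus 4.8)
The plan is to split the seven items into two groups. Items (1), (2), (3), (4) are a direct analysis of the critical points of $F=(J,H)$ and of the geometry of $TS^2$; items (5), (6), (7) then follow by feeding (1)--(4) into the general machinery of Sections \ref{fibers:sec} and \ref{sec3}. The one genuinely computational part is the local normal-form analysis behind (2) and (3), and that is the step I expect to be the main obstacle.

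For (1): the vector field $\mathcal{H}_J$ is the infinitesimal generator of the lift to $TS^2$ of the rotation action about the $\mathbf{k}$-axis, and both $\|\mathbf{p}\|^2$ and $\mathbf{q}\cdot\mathbf{k}$ are invariant under that action, so $\{J,H\}=0$. Linear independence of $\mathcal{H}_J,\mathcal{H}_H$ off a set of measure zero is checked directly from \eqref{equ_shperical_pendulum} and the explicit formula for $\mathcal{H}_J$: they are dependent only at the (at most two-dimensional) set of relative equilibria and the two equilibria. For (4): since $\mathbf{q}\cdot\mathbf{k}$ is bounded on $S^2$, the sublevel set $\{H\le c\}$ is contained in $\{(\mathbf{q},\mathbf{p})\in TS^2 \mid \|\mathbf{p}\|^2\le 2(c+1)\}$, which is closed and bounded, hence compact; so $H$ is proper, and therefore $F$ is proper, because the preimage of a product of compact sets lies in a sublevel set of $H$. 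That $J$ itself is not proper is shown in the text preceding the theorem.

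For (2) and (3) I would first locate $\operatorname{Crit}(F)$. A point is rank $0$ iff $\mathcal{H}_J$ and $\mathcal{H}_H$ both vanish; $\mathcal{H}_J(\mathbf{q},\mathbf{p})=0$ forces $\mathbf{q}=\pm\mathbf{k}$ and $\mathbf{p}=0$, and these are equilibria of the pendulum, so the only rank-$0$ points are $N:=((0,0,1),(0,0,0))$ and $S:=((0,0,-1),(0,0,0))$. The rank-$1$ critical points are the points where $\mathcal{H}_H$ is a nonzero multiple of $\mathcal{H}_J$, i.e. the uniform horizontal rotations (relative equilibria), which form a union of circles in $TS^2$. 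At $N$ and $S$ I would linearize the commuting flows of $\mathcal{H}_J$ and $\mathcal{H}_H$ (equivalently compute the Hessians of $J$ and $H$) in Darboux coordinates: at $S$ the quadratic parts are simultaneously of elliptic type, giving an elliptic-elliptic point, while at $N$ the linearized pendulum has eigenvalues of the form $\pm a\pm \mathrm{i} b$ and $J$ generates the rotation, so $(J,H)$ falls into the focus-focus model of Theorem \ref{singularities_theorem}(iii); nondegeneracy in Williamson's sense (the two quadratic forms spanning a Cartan subalgebra) is immediate from this eigenvalue computation. At a rank-$1$ point I would use the symplectic slice transverse to $\mathcal{H}_J$ as in Definition \ref{def:nondegpoi} and check that the restricted Hessian of $H$ is definite, hence nondegenerate of elliptic type, so these points are transversally elliptic; no hyperbolic component occurs anywhere. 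Setting up convenient local coordinates on $TS^2$ near each type of critical point and carrying out this eigenvalue/Hessian bookkeeping is the part I expect to require the most care.

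Given (2)--(4), item (5) is immediate: $F$ is proper, so by the remark following Definition \ref{def:bifurcation} the bifurcation set equals the set of critical values, and the computation above identifies these critical values as the graph of $x\mapsto H^{-}(x):=\min_{J^{-1}(x)}H$ (the minimum being the bottom of the effective potential, attained at the relative equilibrium with angular momentum $x$) together with the single interior point $F(N)=(0,1)$, which is exactly the set of Figure \ref{critical_set_spherical_pendulum.figure}. For (6) I would invoke Theorem \ref{theo:main-connectivity}: by (2)--(3) the system is almost-toric and by (4) it is proper, and one constructs a diffeomorphism $g$ from $F(M)=\operatorname{epi}(H^{-})$ onto its image inside a proper convex cone $C_{\alpha,\beta}$ by bending the two unbounded ends of the region so that its boundary becomes a smooth curve asymptotic to the two rays of the cone, arranged so that $g(\Sigma_F)$, being the graph of a function of the first variable together with one interior point, still has no vertical tangencies; Theorem \ref{theo:main-connectivity} then gives connectedness of the fibers. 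Finally, for (7) the same $g$ lets us apply Theorem \ref{theo:main-image}: $J$ is nonconstant and, by Proposition \ref{prop:J-connected}, Morse-Bott with connected level sets, so Theorem \ref{imagemomentum:theorem} yields $F(M)=g^{-1}(\operatorname{epi}(H^{-})\cap\operatorname{hyp}(H^{+}))$ with $H^{+}\equiv +\infty$, i.e. the region of Figure \ref{critical_set_spherical_pendulum.figure}; and substituting into \eqref{equ:H} gives $F(N)=(0,1)$ and $F(S)=(0,-1)$ for the images of the focus-focus and elliptic-elliptic points.
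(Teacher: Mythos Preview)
Your treatment of items (1)--(5) is essentially the paper's: Poisson commutativity from $S^1$-invariance, the explicit description of $\operatorname{Crit}(F)$, the Hessian/linearization check at rank~0 and rank~1 points, and properness of $H$ from the formula. The paper carries out the rank~1 Hessian computation explicitly on a transversal $(q^1,p_1)$-slice, but your outline is the same idea.

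Where you diverge from the paper is in (6) and (7), and there your construction of $g$ has a genuine inconsistency. The cone $C_{\alpha,\beta}$ in Theorem~\ref{theo:main-connectivity} is set up precisely so that the projection onto the \emph{first} coordinate is proper (this is how Proposition~\ref{prop:J-connected} concludes that the first component of $g\circ F$ is proper and hence has connected level sets). In particular, vertical slices of $C_{\alpha,\beta}$ are bounded. But $F(M)=\operatorname{epi}(H^-)$ has \emph{unbounded} vertical slices, so no diffeomorphism that keeps $g(\Sigma_F)$ ``the graph of a function of the first variable'' can land inside such a cone. Any $g$ that does fit $F(M)$ into $C_{\alpha,\beta}$ must exchange the roles of the two directions (morally, swap $J$ and $H$), and then the first component of $g\circ F$ is no longer $J$. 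Your sentence in (7) that ``$J$ is \dots\ Morse--Bott with connected level sets by Proposition~\ref{prop:J-connected}'' is therefore unjustified: Proposition~\ref{prop:J-connected} yields this for the first component of $g\circ F$, not for $J$, and $J$ itself is not proper, so neither Proposition~\ref{prop:levelset} nor Proposition~\ref{prop:J-connected} applies to it directly.

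The paper avoids this detour entirely. For (6) it observes (from the Hessian computations in (2)) that $H$ is a proper Morse function on $TS^2$ with critical points only at the two poles, of index $0$ and $2$; Proposition~\ref{prop:levelset} then gives that $H$ has connected level sets, and Theorem~\ref{theo:fibers} (applied with the $H$-component playing the role of the function with connected fibers) yields connectedness of the fibers of $F$. For (7) it simply invokes item~(iv) of Theorem~\ref{theo:interior}: once the fibers are connected and $F$ is proper, $B_r=\mathring{B}\setminus\{(0,1)\}$ and $\partial B$ is exactly the elliptic branch computed in (5), so $F(M)$ is the depicted region. If you want to salvage your route, take $g(j,h)=(h+1,j)$: then $g\circ F=(H+1,J)$, the image sits in a proper cone with vertex at the elliptic--elliptic value, $h'(\lambda)\neq 0$ rules out vertical tangencies, and Theorem~\ref{theo:main-connectivity} applies --- but note this is just a repackaging of the $H$-argument above.
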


\begin{proof}
  We prove each item separately.

  \paragraph{{\em {\rm (1)} Integrability.}} Since the Hamiltonian $H$
  is $S^1$-invariant, the associated momentum map $J $ is conserved,
  i.e., $\{H, J\} =0$. To prove integrability, we need to show that
  $\DD H $ and $\DD J $ are linearly independent almost everywhere on
  ${\rm T}S^2$. Note that if $(\mathbf{q},\mathbf{p})\in {\rm T}S^2$,
  the one-forms $\DD H(\mathbf{q},\mathbf{p})$ and $\DD
  J(\mathbf{q},\mathbf{p})$ are linearly dependent precisely when the
  vector fields \eqref{equ_shperical_pendulum} and
  \eqref{equ_momentum} are linearly dependent. Thus we need to
  determine all $\mathbf{q}, \mathbf{p}\in \mathbb{R}^3$ such that
  there exist $a,b \in \mathbb{R}$, not both zero, satisfying
  $a\mathbf{p} - b \mathbf{q}\times\mathbf{k} = 0$, $-a \mathbf{k} +
  a\left(\mathbf{q}\cdot\mathbf{k} - \|\mathbf{p}\|^2\right)\mathbf{q}
  - b \mathbf{p}\times \mathbf{k} =0$, $\|\mathbf{q}\|^2 = 1$,
  $\mathbf{q}\cdot \mathbf{p}=0$.  A computation gives that the set of
  points $(\mathbf{q}, \mathbf{p}) \in {\rm T}S^2$ for which $\DD H $
  and $\DD J $ are linearly dependent is the measure zero set:
  \begin{equation}
    \label{critical_points}
    \left\{((0,0,1), (0,0))\right\}\cup
    \left\{\left.\left(\mathbf{q}, \frac{1}{\lambda} \mathbf{q}
          \times \mathbf{k}\right) \right|\; q^3=- \lambda^2,\; 
      (q^1)^2 + (q^2)^2 = 1 - \lambda^4, \; 0<|\lambda|\leq 1\right\}.
  \end{equation}

  \paragraph{{\em {\rm (2)} Non-degeneracy.}} All critical points of
  $F:=(J,H): {\rm T}S^2 \rightarrow \mathbb{R}^2$ are
  non-degenerate. We prove it for rank 1 critical points.  The
  non-degeneracy at the rank 0 critical points $((0,0,1),(0,0,0))$ and
  $((0,0,-1),(0,0,0))$ are similar exercices (the computation for the
  point $((0,0,1), (0,0,0))$ may also be found in \cite{san-mono}). So
  we consider the singularities $(\mathbf{q}_0,\mathbf{p}_0)$ given by
  $-q^3_0=\lambda^2$, $\mathbf{p}_0= \frac{1}{\lambda}\mathbf{q}_0
  \wedge {\bf k}$, where $\lambda \in (0,\,1)$ (and hence $q^3_0 \in
  (-1,\,0)$).  We have $\DD J(\mathbf{q}_0,\,\mathbf{p}_0) \neq 0$ and
  $\DD H(\mathbf{q}_0,\,\mathbf{p}_0) \neq 0$.  As explained at the
  beginning of the section, the component $J$ is a momentum map for a
  Hamiltonian $S^1$-action. The flow of $J$ rotates about the
  $\mathbf{k}$ axis, so any vertical plane is transversal to the flow
  in the $\mathbf{q}$-coordinates.  The surface $\Sigma$ obtained by
  intersecting the level set of $J$ with the plane $q_2=0$ is chosen
  as the symplectic transversal surface to the flow of
  $\mathcal{H}_H$. The restriction of $J(\mathbf{q}, \mathbf{p})
  =q^1p_2 - q^2p_1$ to $\Sigma$ is $J|_{\Sigma}(\mathbf{q},
  \mathbf{p})=q^1p_2$, so that $q^1p_2$ is constant on $\Sigma$.

  We work near $(\mathbf{q}_0,\,\mathbf{p}_0)$, so $q^1 \neq 0$, and
  hence on $\Sigma$ we have
  \begin{eqnarray} \label{para} p_2=J/q^1,\,\,\,\,\,\,
    q^3=-\sqrt{1-(q^1)^2},\,\,\,\,\,\, q^1p_1+q^3p_3=0\,\,\,
    \Rightarrow\,\,\, p_3=\frac{q^1p_1}{\sqrt{1-(q^1)^2}}.
  \end{eqnarray}
  So $\Sigma$ may be smoothly parametrized by the coordinates
  $(q^1,p_1)$ using \eqref{para}.  Let us compute the Hessian of $H$
  at $(\mathbf{q}_0,\, \mathbf{p}_0)$ in these coordinates.  The
  Taylor expansion of $H$ at $(\mathbf{q}_0,\, \mathbf{p}_0)$ is
$$
H=\frac{1}{2}(p_1^2+p_2^2+p_3^2)+q_3 =\frac{1}{2}\left(p_1^2
  +\frac{J}{(q^1)^2}+\frac{(q^1)^2p_1^2}{1-(q^1)^2}\right)
-\sqrt{1-(q^1)^2}.
$$
By \eqref{critical_points} we get
$(p_1^0,\,p_2^0,\,p_3^0)=(q_2^0,\,-q_1^0,\,0)/\lambda$ and hence the
critical point is
\[
(\mathbf{q}_0,\mathbf{p}_0)=
\left(q^1_0,\,0,\,q^3_0=\sqrt{1-(q^1_0)^2},\, 0,\, p_2^0=J/q^1_0,\,
  p_3^0=0 \right).
\]
To simplify notation, let us denote $s=(q^1)^2 \in [0,1]$; thus
$(s,p_1)$ are the smooth local coordinates near $(\mathbf{q}_0,
\mathbf{p}_0) \simeq (s_0, \mathbf{p}_1^0)$ and the expression on the
Hamiltonian is
\begin{equation}
  \label{abc}
  H=\frac{1}{2}\left(p_1^2+\frac{J^2}{s}+ \frac{sp_1^2}{1-s}
  \right)-\sqrt{1-s}
  = p_1^2\left(\frac{1}{2}+\frac{s}{2(1-s)}\right)+
  \frac{J^2}{2s}-\sqrt{1-s}
\end{equation}
The first term in \eqref{abc} is equal to
$$
p_1^2\left(\frac{1}{2}+\frac{s_0}{2(1-s_0)}\right)
+\mathcal{O}((s-s_0, p_1)^3),$$ while the Taylor expansion of the
second term $\Big( \frac{J^2}{2s}-\sqrt{1-s}\Big)$ with respect to $s$
is
$$
\left(\frac{J^2}{2s_0}-\sqrt{1-s_0}\right)+(s-s_0)
\left(-\frac{J^2}{2s_0^2}+\frac{1}{2\sqrt{1-s_0}}\right) +(s-s_0)^2
\left(\frac{J^2}{s_0^3} + \frac{1}{4(1-s_0)^{3/2}}
\right)+\mathcal{O}((s-s_0)^3).
$$
The coefficient
$\left(-\frac{J^2}{2s_0^2}+\frac{1}{2\sqrt{1-s_0}}\right)$ vanishes
since $\DD H|_{\Sigma}(\mathbf{q}_0,\, \mathbf{p}_0)=0$, so the
Hessian $\operatorname{Hess}(H|_{\Sigma})(s,p_1)$ of $H_{\Sigma}$ is
of the form $Ap_1^2+B(s-s_0)^2$, where $A>0$ and $B>0$, which is
non-degenerate, as we wanted to show.

\paragraph{{\em {\rm (3)} The nature of the singularities of $F$.}}
The statement in the theorem follows from the computations in (2).
The equilibrium $((0,0,1),(0,0,0))$ is of focus-focus type and the
equilibrium $((0,0,-1),(0,0,0))$ is of elliptic-elliptic type.  The
other critical points are of transversally-elliptic type.

\paragraph{{\em {\rm (4)} $H$ and $F$ are proper maps.}}
The properness of $H$ follows directly from the defining
formula~\eqref{equ:H}; indeed, on ${\rm T}S^2$, the map
$(\mathbf{q},\mathbf{p})\mapsto \mathbf{q}\cdot \mathbf{k}$ takes
values in a compact set, hence if $K\subset\RM$ is compact, there is a
compact set $K'\subset\RM$ such that $H^{-1}(K)$ is closed in
$\{(\mathbf{q},\mathbf{p}) \mid \norm{\mathbf{p}}^2\in K'\}$ and hence
is compact. Thus $H$ is proper.

Then for any compact set $C\subset\RM^2$, $F^{-1}(C)\subset
H^{-1}(\operatorname{pr}_2(C))$ is compact, thus $F$ is proper as
well. ($\operatorname{pr}_2: \mathbb{R}^2\rightarrow \mathbb{R}$ is
the projection on the second factor).

On the other hand it is clear that the level sets of $J$ are
unbounded, so $J$ cannot be proper.

\paragraph{{\em {\rm (5)} Critical and bifurcation sets.}} The
critical set is the image of \eqref{critical_points} by the map
$(J,\,H): {\rm T}S^2 \rightarrow \mathbb{R}^2$. We have $J((0,0,\pm
1),(0,0,0))=0$, $H((0,0,\pm 1), (0,0,0)) = \pm 1$.  In addition,
\begin{equation} \nonumber
  \label{parametric_rep_sing_curve}
  \left\{ 
    \begin{aligned}
      J\left(\mathbf{q}, \frac{1}{\lambda} \mathbf{q} \times
        \mathbf{k}\right) & = - \frac{1}{ \lambda}
      \left(\left(q^1\right)^2 + \left(q^2\right)^2 \right)
      = \frac{\lambda^4 - 1}{ \lambda}=:j(\lambda)\\
      H\left(\mathbf{q}, \frac{1}{\lambda} \mathbf{q} \times
        \mathbf{k}\right) & = \frac{1}{2
        \lambda^2}\left(\left(q^1\right)^2 + \left(q^2\right)^2
      \right) + q^3 = \frac{1- \lambda^4}{2 \lambda^2} - \lambda^2=
      \frac{1-3 \lambda^4}{2 \lambda^2}=:h(\lambda)
    \end{aligned}
  \right.
\end{equation} 
for $0<|\lambda|<1$ which is a parametric curve with two branches.  We
can give an Cartesian equation for this curve. An analysis of
$h(\lambda)$ for $0<|\lambda|\leq 1$ shows that $h(\lambda)\geq
-1$. Eliminating $\lambda $ yields the two branches
$$j(h) = \pm
\frac{2}{9} \left(3-h^2 + h\sqrt{h^2+3}\right)\sqrt{h+\sqrt{h^2+3}},
$$
for $h\geq -1$. The critical set is given in Figure
\ref{critical_set_spherical_pendulum.figure}. The graph intersects the
horizontal momentum axis at $j=\pm 2 \sqrt[4]{3}/3$ and at the lower
tip $(j,h)=(0,-1)$ the graph is not smooth.

\begin{figure} [h]
  \centering
  \includegraphics[width=4cm]{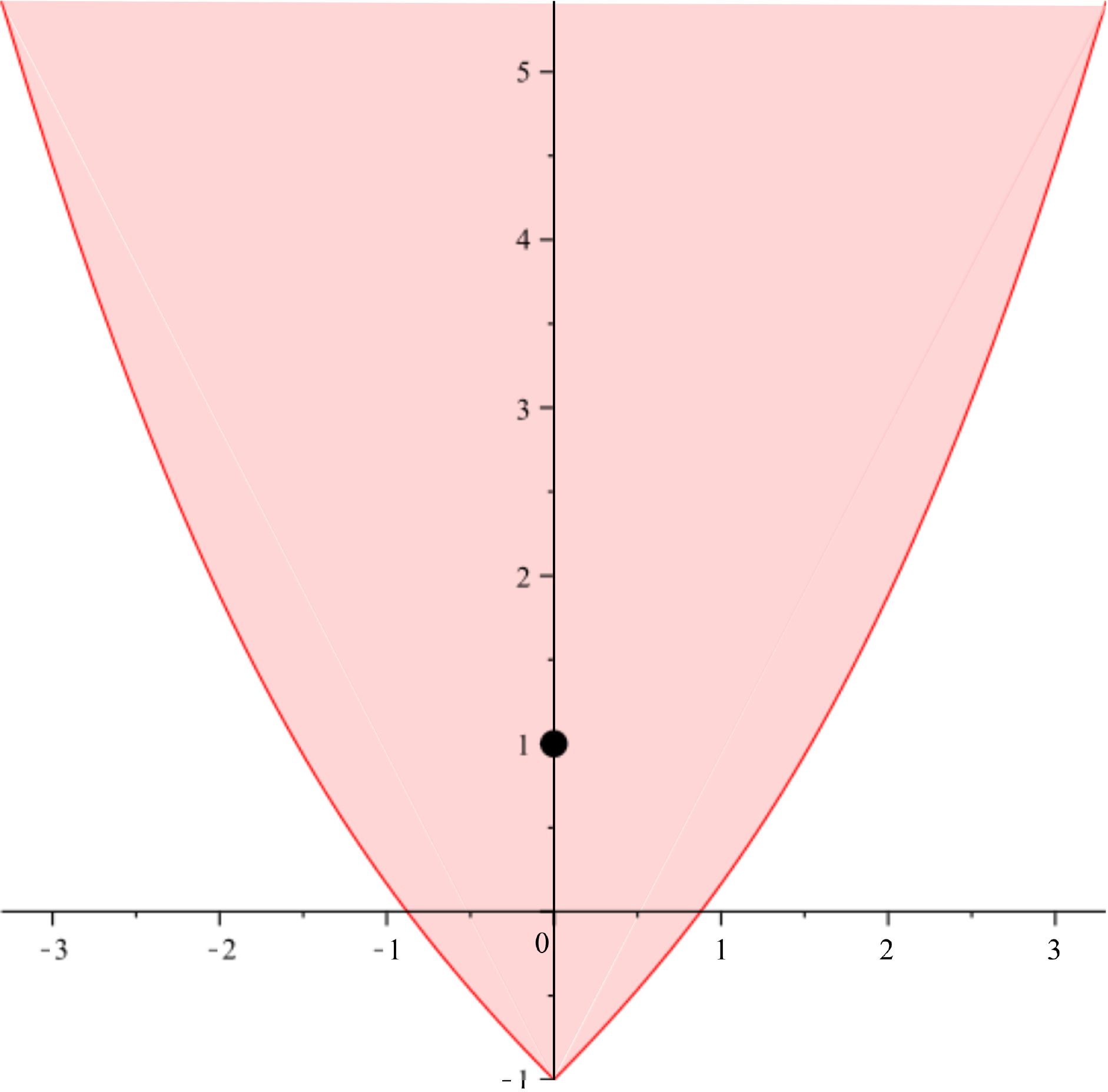}
  \caption{Image the momentum-energy map for the spherical
    pendulum. The bifurcation set $\Sigma_{F}$ is its
    boundary plus the focus-focus value $(0,1)$.}
  \label{critical_set_spherical_pendulum.figure}
\end{figure}

Since $F$ is proper, the bifurcation set equals the set of critical
values of the system, see Figure
\ref{critical_set_spherical_pendulum.figure}.

\paragraph{{\em {\rm (6)} The fibers of $F$ are connected.}}
This follows from item (5) and Theorem \ref{theo:fibers}.

Compare this with the result \cite[page 160, Table 3.2]{CuBa1997}
whose proof is quite difficult (however, it also gives the description
of the fibers of $F$).

\paragraph{{\em {\rm (7)} Range of the momentum-energy set.}}  The
range of the momentum-energy set is the epigraph of the critical set
shown in Figure \ref{critical_set_spherical_pendulum.figure}: this
follows from item~\textup{(iv)} in Theorem~\ref{theo:interior}.

\end{proof}

\section{Final remarks} \label{sec:remarks}

\subsection*{Real solutions sets}

To motivate further our connectivity results (Theorem~\ref{theo:main-connectivity0} and
Theorem~\ref{theo:main-connectivity}), consider on the real
plane with coordinates $(x,\,y)$ the following question: \emph{is the
  solution set of the polynomial equation $\,(x^2-1)\, y^2\,=\,0\,$
  connected?}  Surely the answer is yes, since the solution set
consists of two parallel vertical lines and an intersecting horizontal
line: $\{x=-1\} \cup \{x=1\} \cup \{y=0\}$.  One can modify this
equation slightly to consider the equation
$\,(x^2-1)\,(y^2+\epsilon^2)\,=\, 0\,$, $\epsilon \neq 0$.  In this
case the solution set $\{x=-1\} \cup \{x=1\}$ is disconnected, so a
small perturbation of the original equation leads to a disconnected
solution set (see Figure \ref{fig:solution}).  As is well known in
real algebraic geometry, the connectivity question is not stable under
small perturbations, so any technique to detect connectivity must be
sensitive to this issue.  

Although in all of these examples the
answers are immediate, one can easily consider equations for which
answering this connectivity question is a serious challenge.

\begin{figure}[h]
  \centering
  \includegraphics[width=0.6\textwidth]{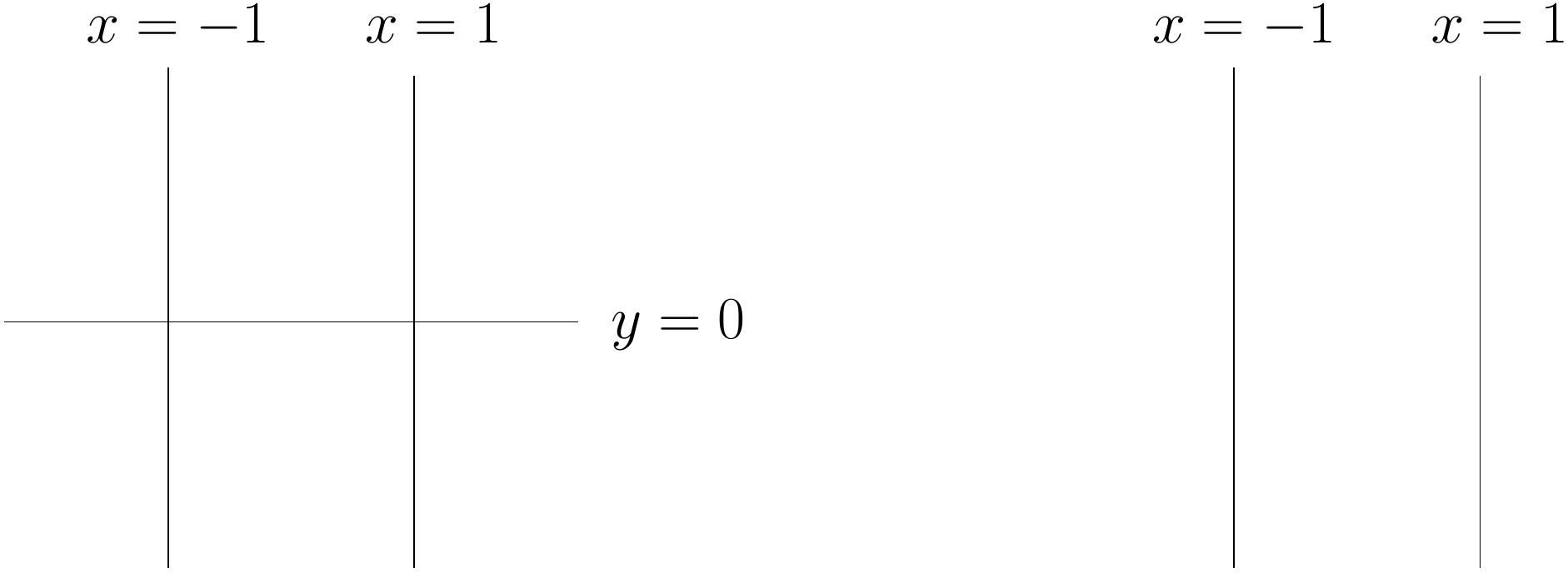}
  \caption{Solution sets of the polynomial equations $(x^2-1) y^2=0$
    and $(x^2-1)(y^2+\epsilon^2)=0$, $\epsilon\neq 0$.}
  \label{fig:solution}
\end{figure}

This question may be put in a general framework in two alternative,
equivalent, ways. Consider functions $f_1,\,\,\ldots,\,\,f_n\,\,
\colon\,\, M \subseteq \mathbb{R}^{m} \to \mathbb{R}$ and constants
$\lambda_1,\, \ldots,\, \lambda_n \in \mathbb{R}$, where $M$ is a
connected manifold. Is the {solution set} $\mathcal{S}
\subset\mathbb{R}^m$ of
\begin{eqnarray} \label{fiberequation} \left\{ \begin{array}{rl}
      f_1\,(x_1,\,\ldots,\,x_{m})&\,\,=\,\,\,\,\lambda_1 \\
      \textup{\,} \vdots \\
      f_n\,(x_1,\,\ldots, \,x_{m})&\,\,=\,\,\,\, \lambda_n
    \end{array}   
  \right.
\end{eqnarray}
a connected subset of $M$? Equivalently, are the fibers
$F^{-1}(\lambda_1,\, \ldots, \, \lambda_n)$ of the map $F\colon M
\,\subseteq\, \mathbb{R}^m \longrightarrow \mathbb{R}^n$ defined by
$$
F(x_1,\,\ldots,\,x_m)\,\,:=\,\,(f_1(x_1,\,\ldots,\,x_m),\,\,\,
\ldots\,\,\,,\,\,\,f_n(x_1,\,\ldots,\,x_m))
$$
connected? If $F$ is a scalar valued function (i.e., if there is only
one equation in the system) a well-known general method exists to
answer it when $F$ is smooth, namely, \emph{Morse-Bott theory}. 
We saw in Proposition~\ref{prop:levelset} that if $M$ is 
a connected smooth manifold and $f:M\fleche\RM$ is a
  proper Morse-Bott function whose indices and co-indices are always
  different from 1, then the level sets of $f$ are connected.

In order to motivate the idea of this result further consider the
following example: the height function defined on a $2$-sphere, and
the same height function defined on a $2$-sphere in which the North
Pole is pushed down creating two additional maximum points and a
saddle point, as in Figure~\ref{fig:morse}. When the height function
is considered on the $2$-sphere, it has connected fibers. On the other
hand, when it is considered on the right figure, many of its fibers
are disconnected. The ``essential'' difference between these two
examples is that in the second case the function has a saddle point,
while in the first case it doesn't. A saddle point has index 1.

\begin{figure}[h]
  \centering
  \includegraphics[height=5cm, width=9cm]{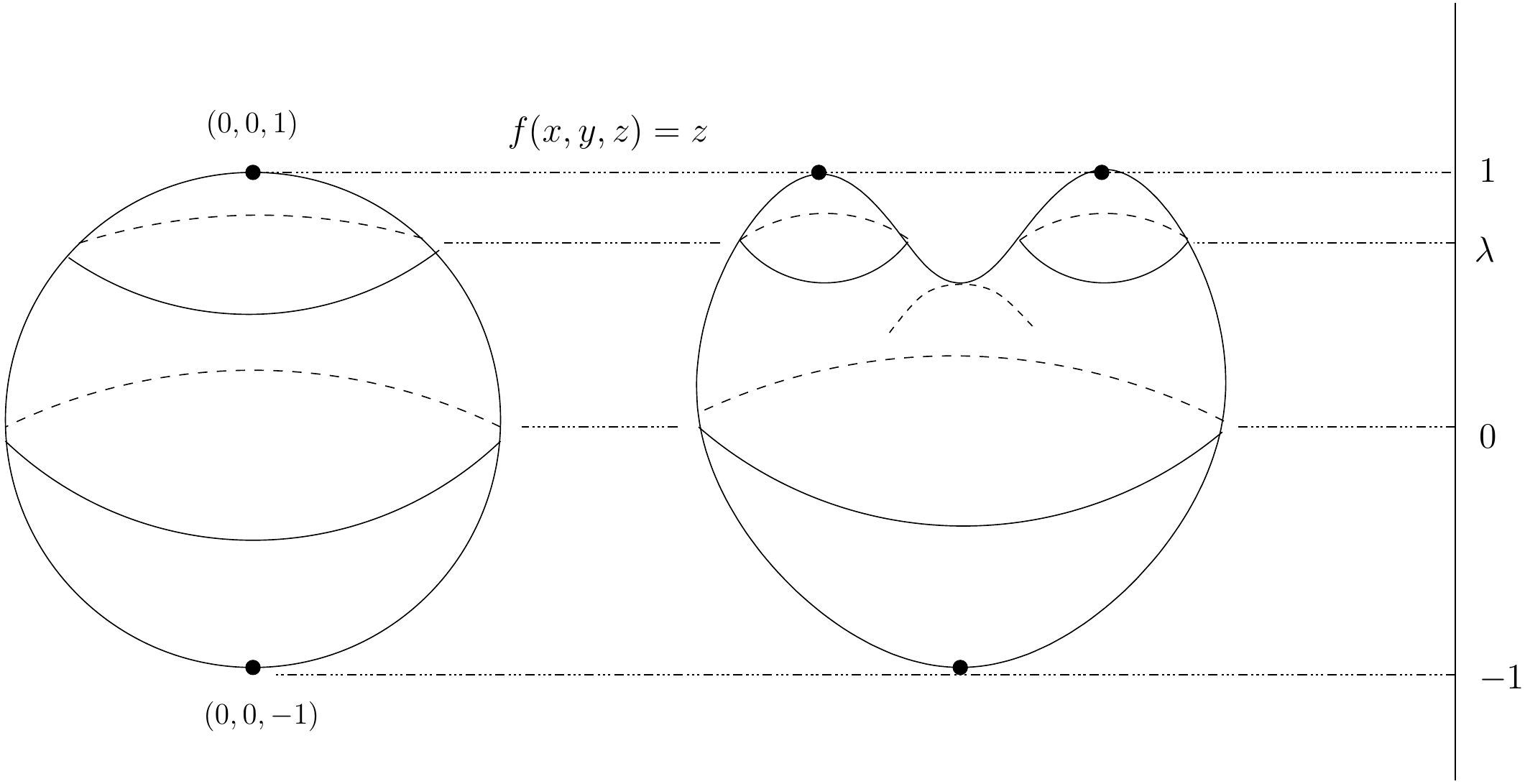}
  \caption{Height function.}
  \label{fig:morse}
\end{figure}

Next, let us look at an example of a vector valued function.  Consider
$M=S^2\times\R^2 \subset \mathbb{R}^5$ with coordinates
$(x,\,y,\,z,\,u,\, v)$.  Is the solution set $\mathcal{S}$ of
  $$
  \left\{ \begin{array}{rl}
      2u^2+2v^2 + z  &\,\,=\,\,\,\, 1 \\
      ux+vy &\,\,=\,\,\,\, 0
    \end{array}   
  \right.
$$
connected? In other words, is the set $F^{-1}(1,\,0)$ connected, where
$ F\colon S^2\times\R^2 \to \mathbb{R}^2$ defined by
$$F(x,y,z,u,v):=(2u^2+2v^2 + z,\,\, ux+vy)?$$
One is tempted to again use Morse-Bott theory to check this
connectivity, but Morse-Bott theory for which function? \emph{The first goal
of the present paper was to give a method to answer connectivity questions of
this type in the case $F$ is an integrable system.} In the paper we introduced
a method to construct Morse-Bott functions which, from the point of view of
symplectic geometry, behave well near singularities.  We saw how the behavior
of an integrable system near the singularities has a strong effect on the global properties of the 
system.

The fibers of $F\colon S^2\times\R^2 \to \mathbb{R}^2$
are, by the Theorem \ref{theo:main-connectivity}, connected, because
its image can be easily seen to be given by
Example~\ref{fig:example}. The question answered by
Theorem~\ref{theo:main-connectivity} is probably the most basic
\emph{topological} question to ask about a solution set (except for
its non-emptyness).

\subsection*{Semiclassical quantization}

Semiclassical quantization is a strong motivation for undergoing the 
systematic study of integrable systems in this paper.  Consider the quantum
version of the connectivity problem.  The functions $f_1,\dots,f_n$
are replaced by ``quantum observables'', i.e., self-adjoint operators
$\hat{f}_1,\dots,\hat{f}_n$ on a Hilbert space $\mathcal{H}$, and the
system of fiber equations \eqref{fiberequation} is replaced by the
system
\[
\left\{
  \begin{aligned}
    \hat{f}_1 \psi &= \lambda_1 \psi\\
    \vdots&\\
    \hat{f}_n \psi &= \lambda_n \psi
  \end{aligned}\right. \qquad \psi\in\mathcal{H}.
\]
So $\lambda_i$ should be an eigenvalue for $\hat{f}_i$ and the
eigenvector $\psi$ should be the same for all $i=1,\dots,n$.  There is
a chance to solve this when the operators $\hat{f}_i$ pairwise
commute. This is the quantum analogue of the Poisson commutation
property for the integrable system given by $f_1,\dots,f_n$.

In the last thirty years, semiclassical analysis has pushed this idea
quite far. It is known that to any regular Liouville torus of the
classical integrable system, one can associate a \emph{quasimode} for
the quantum system, leading to approximate
eigenvalues~\cite{CDV,lazutkin}. Thus, the study of  bifurcation
sets (described in  Theorem~\ref{theo:main-image0} and 
Theorem~\ref{theo:main-image}) is fundamental for a good understanding of the quantum
spectrum. More recently, quasimodes associated to singular fibers have
been constructed; see~\cite{san-mono} and the references therein. In some
case, it can be shown that the quantum spectrum completely determines
the classical system, see Zelditch \cite{Ze1998}.

When the system has symmetries, the quantum spectrum typically
exhibits degenerate (or almost degenerate) eigenvalues. In the setting
of integrable systems, this is revealed by the non-connectedness of
some fibers. Thus, connectivity results are of primary importance from
the quantum perspective. They predict almost degenerate (or
\emph{clusters of}) eigenvalues.  For instance, in the inverse
spectral result of~\cite{san-mono}, detecting the number of connected
components was the key point (and the most subtle) of the analysis.

\subsection*{Singular Lagrangian fibrations}

Theorem~\ref{theo:main-connectivity0} and
Theorem~\ref{theo:main-connectivity} may have applications to mirror
symmetry and symplectic topology. An integrable system without
hyperbolic singularities gives rise to a toric fibration with
singularities. The base space is endowed with a singular integral
affine structure.  Remarkably, these singular affine structures are of
key importance in various parts of symplectic topology, mirror
symmetry, and algebraic geometry; for example they play a central role
in the work of Kontsevich and Soibelman \cite{KS}.  These singular
affine structures have been studied in the context of integrable
systems (in particular by Nguy{\^e}n Ti{\^e}n Zung~\cite{zung-I}), but
also became a central concept in the works by Symington \cite{Sy2010}
and Symington-Leung \cite{ls} in the context of symplectic geometry
and topology, and by Gross-Siebert, Casta{\~n}o-Bernard,
Casta\~no-Bernard-Matessi \cite{grs1, grs2, grs3, grs4, castano-1,
  castano0, castano}, among others, in the context of mirror symmetry
and algebraic geometry. Fiber connectivity is (usually) assumed in
theorems about Lagrangian fibrations, so the theorem above gives a
method to test whether a given result in that context applies to a
Lagrangian fibration arising from an integrable system. See also
Eliashberg-Polterovich \cite[page 3]{eliashberg} for relevant examples
in the theory of symplectic quasi-states.

\subsection*{Toric and semitoric systems}

Non-degenerate integrable systems without hyperbolic singularities are
prominent in the literature, both in mathematics and in physics; see
for instance \cite{Ba2009, Sy2010,VN2007}.  They are usually called
\emph{almost-toric systems}, although the choice of name is somewhat
misleading as they may not have any periodicity from a circle or torus
action; on the other hand, these systems retain some properties of
toric systems, at least in the compact case. The most well studied
case of almost-toric system is that of toric systems coming from a
Hamiltonian torus actions.

A proof of Theorem \ref{theo:main-connectivity} for the so called
``semitoric'' systems was given by the third author in \cite{VN2007}
using the theory of Hamiltonian circle actions.  Semitoric systems are
integrable systems on four-manifolds which have no hyperbolic
singularities and for which one component of the system generates a
proper periodic flow. The proof in \cite{VN2007} uses in an essential
way the periodicity assumption and it cannot be extended, as far as we
know, to deal with the general systems treated in Theorem
\ref{theo:main-connectivity}. A classification of semitoric systems in
terms of five symplectic invariants was given by the first and third
authors in \cite{PeVN2009, PeVN2011}.

{\small \bibliographystyle{new}
  \addcontentsline{toc}{section}{References}
  
}

\noindent
\\
\'Alvaro Pelayo \\
School of Mathematics\\
Institute for Advanced Study\\
Einstein Drive\\
Princeton, NJ 08540 USA.
\\
\\
and
\\
\\
\noindent
Washington University,  Mathematics Department \\
One Brookings Drive, Campus Box 1146\\
St Louis, MO 63130-4899, USA.\\
{\em E\--mail}: \texttt{apelayo@math.wustl.edu}

\medskip\noindent

\smallskip\noindent
Tudor S. Ratiu\\
Section de Math\'ematiques and Bernoulli Center\\
Station 8\\
Ecole Polytechnique F\'ed\'erale
de Lausanne\\
CH-1015 Lausanne, Switzerland\\
{\em E\--mail}: \texttt{tudor.ratiu@epfl.ch}

\medskip\noindent

\noindent
V\~u Ng\d oc San\\
Institut Universitaire de France\\
\\
and\\
\\
Institut de Recherches Math\'ematiques de Rennes\\
Universit\'e de Rennes 1\\
Campus de Beaulieu\\
F-35042 Rennes cedex, France\\
{\em E-mail:} \texttt{san.vu-ngoc@univ-rennes1.fr}

\end{document}